\theoremstyle{plain}
\newtheorem{Thm}{Theorem}
\newtheorem{thm}{Theorem}[section]
\newtheorem{prop}[thm]{Proposition}
\newtheorem{lemma}[thm]{Lemma}
\newtheorem{cor}[thm]{Corollary}
\theoremstyle{definition}
\newtheorem{rmk}[thm]{Remark}
\theoremstyle{remark}
\def\be{\begin{equation}}
\def\ee{\end{equation}}
\def\bes{\begin{equation*}}
\def\ees{\end{equation*}}
\def\ba{\begin{align}}
\def\ea{\end{align}}
\def\bas{\begin{align*}}
\def\eas{\end{align*}}
\newcommand{\Z}{\mathbb{Z}}
\newcommand{\R}{\mathbb{R}}
\newcommand{\BZ}{\mathbb{Z}}
\newcommand{\BQ}{\mathbb{Q}}
\newcommand{\Q}{\mathbb{Q}}
\newcommand{\Zq}{\BZ[q^{\pm1}]}
\newcommand{\Zx}{\BZ[\xi]}
\newcommand{\Zqf}{\BZ[q^{\pm\frac14}]}
\newcommand{\Zqz}{\BZ[z^{\pm1}, q^{\pm1}]}
\def\Qx{\BQ[\xi]}
\def\cH{\mathcal H}
\newcommand{\bk}{\mathbf{k}}
\renewcommand{\a}{\mathbf{a}}
\def\BZq{\BZ[q^{\pm 1}]}
\def\ord {\mathrm{ord}}
\newcommand{\cU}{{\mathcal U}}
\def\bk{{\bf k}}
\def\b{{Q_0}}
\def\bn{{\bf n}}
\newcommand{\qbinom}[2]{\genfrac{[}{]}{0pt}{}{#1}{#2}}
\newcommand{\binomq}[2]{\binom{#1}{#2}_{\!\!q}}
\newcommand{\binomx}[2]{\binom{#1}{#2}_{\!\!\xi}}
\newcommand{\cb}[1]{\{#1\}}
\newcommand{\tF}[1]{\tilde F^{(#1)}}
\newcommand{\sn}{\operatorname{sn}}
\newcommand{\ex}{\operatorname{ev}_\xi}
\newcommand{\ev}{\operatorname{ev}}
\newcommand{\diag}{\operatorname{diag}}
\newcommand{\sumxi}{{\sum_{n_1,\dots, n_m}}}
\newcommand{\sumx}{{\sum_{n}}}
\renewcommand{\l}[1]{\mathcal{L}_{#1}}
\newcommand{\tr}{\operatorname{tr}}
\newcommand{\id}{\operatorname{id}}
\renewcommand{\L}{\operatorname {\mathbf L}}
\def\la{\langle}
\def\ra{\rangle}
\def\Pe{P^{(1)}}
\def\cD{\mathcal D}
\def\lam{\lambda}
\def\ve{\varepsilon}
\def\F{\mathcal F}
\def\U{\mathcal U}
\def\cD{\mathcal D}
\def\Zzq{\BZ[z^{\pm 1}, q^{\pm 1}]}
\def\Zxf{\BZ[ \xi^{ 1/4}]}
\newcommand{\Uq}[1]{\cU_q^{(#1)}}
\begin{document}

\title[On the integrality of the WRT invariants]{On the integrality of
the Witten--Reshetikhin--Turaev\\[2mm]
 3--manifold invariants}
\author{Anna Beliakova}
\address{I-Math, University of Zurich\newline
\indent Winterthurerstrasse 190\newline
 \indent 8057 Zurich,
Switzerland}
\email{anna@math.uzh.ch}
\author{Qi Chen}
\address{Department of Mathematics\newline
\indent Winston-Salem State University\newline
\indent Winston Salem, NC 27110
USA}
\email{chenqi@wssu.edu}
\author{Thang  Le}
\address{Department of Mathematics\newline
         \indent Georgia Institute of Technology\newline
         \indent Atlanta, GA 30332-0160, USA}
\email{letu@math.gatech.edu}

\thanks{A.B. and T.L. are  supported  by the Swiss and American
National Science Foundations.
\newline
\indent{\em Mathematics Classification (2010):} Primary 57M27; Secondary 57M25.
\newline
%{\em Key words and phrases: }
}

 %\date{\today \hspace{0.5cm} First edition: November 21, 2005.}

%\dedicatory{Working copy: please proceed with caution!}

\begin{abstract} We prove that the $SU(2)$ and $SO(3)$
 Witten--Reshetikhin--Turaev invariants
of any 3--manifold with any colored link inside at any root of unity
 are algebraic integers.
\end{abstract}

\maketitle

% \tableofcontents

\addtocounter{section}{-1}

%%%%%%%%%%%%%%%%% the text file

\section{Introduction}
\label{sec.intro}
In the late 80s, Witten \cite{Witten} and Reshetikhin-Turaev \cite{RT} associated
with any closed  oriented 3-manifold $M$ (possibly with a colored link inside),
 any root of unity $\xi$ and
any compact  Lie group $G$  a complex number $\tau_M^G(\xi)$,
%$\tau^{SU(2)}_M(\xi)$,
called
the quantum or WRT invariant of  $M$. 
% Both constructions were generalized to all semisimple Lie algebras in \cite{Le2}.

For more than 20 years, the
problem of integrality of the WRT invariants
has been intensively studied.
The interest to this problem was drawn by the
theory of perturbative  3-manifold invariants
generalizing those of Casson and Walker \cite{Ohtsukibook},
by the construction of Integral Topological Quantum Field Theories  \cite{g,gm} and their
 topological applications
 and more recently,
by attempts to  categorify the WRT invariants \cite{Kho}.

In the case $G=SU(2)$, there is a projective version $\tau_M^{SO(3)}(\xi)$,
introduced by Kirby and Melvin \cite{KM} and
defined at  roots of unity  of
 {\em odd} order. This projective version, 
%$\tau_M^{SO(3)}(\xi)$, 
when defined, 
determines the $SU(2)$ version.

In this paper we completely solve the integrality problem for both $SO(3)$ and $SU(2)$
versions of the WRT  invariant for all 3-manifolds with arbitrary links inside.
Before  stating our results,
let us give a brief introduction into the history of this subject.

In 1995 Murakami \cite{Mu}  established  the integrality of the WRT $SO(3)$-invariant
for rational homology 3-spheres at roots of unity of {\em prime orders}.
This result was extended to all 3-manifolds by Masbaum and Roberts \cite{MR}.
Masbaum and Wenzl \cite{MW}, and independently Takata and Yokota \cite{TY},   proved the
  integrality of the projective WRT $SU(n)$-invariant
for all 3-manifolds,
 always under  the assumption that
 the orders of the roots of unity are {\em prime}. Finally the third author \cite{Le2} established the integrality
of the projective WRT invariant
 associated with any  compact simple Lie group, again at roots of unity of prime orders.

The case for the roots of unity of {\it non-prime} orders is more complicated.
The first integrality result for {\em all roots of unity}
was obtained by
Habiro \cite{ha} in  the case of $SU(2)$ and  {\em integral homology 3-spheres}.
Habiro's proof relies on
%was based on  a completely new technique: 
the existence of the unified
invariant for   integral homology 3-spheres 
as an element of   Habiro's ring,
 a certain cyclotomic
completion of the polynomial ring $\Z[q]$.
This unified invariant is
a kind of generating function for  the set of WRT $SU(2)$ invariants
at all roots of unity. The integrality  
 in this approach follows directly from the general properties of 
Habiro's ring.

Habiro and the third author \cite{HL}  subsequently defined
the unified WRT invariant for all {\em simple Lie groups}
 and {\em integral homology 3-spheres}, thus proving
that the WRT invariant of any {\em integral homology 3-sphere} associated to any simple Lie group
and any root of unity is always an  algebraic integer. 
However, the case of manifolds other than homology spheres was unknown, even with $G=SU(2)$.

In this paper we give
 a complete solution for the integrality problem
 for {\em all 3-manifolds} with {\em arbitrary} link inside
at {\it all roots} of unity
for the case of the group $SU(2)$. Our invariants are normalized 
as in \cite{KM}
and we  show
that integrality in that case implies integrality for all other
normalizations used in the literature.

\begin{Thm}\label{main1}
The  WRT $SU(2)$-invariant of any
 3-manifold $M$ with any colored link inside
at any root of unity 
  is an algebraic integer.
\end{Thm}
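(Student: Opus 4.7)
The plan is to combine Habiro's cyclotomic expansion of the colored Jones polynomial with a Kirby calculus reduction of the surgery link, in order to isolate algebraic--integer pieces from the Reshetikhin--Turaev surgery formula. First, I would present $(M,L')$ as surgery on $S^{3}$ along a framed link $L=L_s\cup L'$, where $L_s$ is the surgery link with linking matrix $A$ and $L'$ is the colored link inside $M$. By the Reshetikhin--Turaev formula, $\tau_M^{SU(2)}(\xi)$ is a weighted sum of values of the colored Jones polynomial of $L$ at $\xi$ (with each component of $L_s$ cabled by the Kirby color $\omega$), divided by a factor $F_+^{\sigma_+}F_-^{\sigma_-}$, where $\sigma_\pm$ are the positive and negative inertia indices of $A$ and $F_\pm$ are the Gauss sums associated to $\pm 1$-framed unknots. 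Integrality therefore becomes a question about the $\fp$-adic valuations of two elements of $\BZ[\xi]$: the numerator sum and the Gauss sum denominator.

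Second, I would normalize the surgery presentation by handle slides, stabilizations, and framing changes so that $A$ takes a convenient block form. Connect-summing $M$ with suitable lens spaces $L(p,q)$ and copies of $S^1\times S^2$ allows one to split off a diagonal piece with $\pm 1$ framings (whose WRT invariant is computed by classical Gauss sums and is manifestly an algebraic integer) from a residual ``null'' piece whose surgery matrix is singular, corresponding to the $b_1(M)>0$ directions. Thanks to the multiplicativity of $\tau^{SU(2)}$ under connected sum, it suffices to treat each block separately and then recombine.

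Third, I would invoke Habiro's cyclotomic expansion: the colored Jones polynomial of the framed link $L$ can be written in the cyclotomic basis $\prod_i\prod_{j=1}^{k_i}(q^{n_i}-q^{-j})(q^{-n_i}-q^{j})$ with coefficients in $\BZ[q^{\pm 1}]$, where $n_i$ are the colors on the cabled components of $L_s$. Summing against $\omega$ at a root of unity $\xi$ produces systematic cancellations, since many of these products vanish modulo the annihilator of $\omega$. The outcome of this step is to rewrite the numerator as an element of $\BZ[\xi]$ whose $\fp$-adic valuation can be read off from Habiro-type divisibility identities.

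The main obstacle will be the final divisibility lemma, comparing the $\fp$-adic valuations of the numerator and of $F_+^{\sigma_+}F_-^{\sigma_-}$ for every prime ideal $\fp$ in $\BZ[\xi]$. For integral homology spheres this is immediate from the existence of the unified invariant in Habiro's ring \cite{ha,HL}, but for general $M$ no single element of a cyclotomic completion controls $\tau_M^{SU(2)}$ at all roots of unity simultaneously. I expect the delicate step is to decouple a finite ``$b_1$-part'' of the invariant, which must be estimated directly via Gauss--sum arguments (akin to the prime order case of \cite{MR,Le2}), from the residual Habiro--style part which is controlled by the cyclotomic expansion. Once these two mechanisms of integrality are shown to combine compatibly across all primes $\fp$, the theorem follows.
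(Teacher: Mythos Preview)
Your overall architecture---diagonalize the surgery presentation via connected sums, expand the colored Jones in a Habiro-type basis, then prove a divisibility lemma---matches the paper's. But several of your steps contain genuine gaps that the paper works hard to fill. First, the diagonalization claim is too optimistic: you cannot in general reduce the linking matrix to $\pm 1$ entries by connect-summing with lens spaces. What the classification of linking pairings actually gives (Proposition~\ref{p402}) is that $M\#M$---not $M$---becomes, after adding a specific $N=\#_i \L(2^{k_i},-1)$, diagonal with entries that are signed \emph{prime powers} $\pm p^e$. This is why the core divisibility (Proposition~\ref{lem42}) must be proved for each $b=\pm p^s$ separately; the case analysis there (splitting on $b\bmod 4$, and for odd $b$ a Chinese-remainder factorization of Gauss sums combined with localization at $2$) is the technical heart of the paper, and your sketch gives no approach to it beyond citing the known integral-homology-sphere case. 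Second, ``recombining'' after the connected sum is genuinely obstructed for $SU(2)$ at even $r$: the invariant $\tau^{SU(2)}_N(\xi)$ can vanish, so you cannot simply divide it out. The paper's fix is to insert an odd-colored link $L''\subset N$ with $\tau^{SU(2)}_{N,L''}\ne 0$ (Lemma~\ref{lens}(b)), observe that $M^{\#2s}\#N$ is diagonal of prime type for \emph{every} $s\ge 1$, and then invoke the Dedekind-domain fact (Lemma~\ref{ab}(d)) that $y^s z\in\BZ[\xi^{1/4}]$ for infinitely many $s$ with $z\ne 0$ forces $y\in\BZ[\xi^{1/4}]$.

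Third, Habiro's original cyclotomic expansion is not sufficient when the fixed link $L'$ carries even colors: the coefficients need not lie in $\BZ[q^{\pm 1}]$ in that setting. The paper introduces a second basis $P_n^{(1)}$ for the Grothendieck ring, orthogonal to the odd part of the center under the Hopf pairing, and proves a new integrality statement (Theorem~\ref{prop.habiro}) for the coefficients $c_{L\sqcup L'}(\bk)$ in the mixed $P^{(\ve_i)}$-expansion, with $\ve_i$ determined by the parities of the colors on $L'$ via \eqref{3306}. Without this refinement your step three does not produce integral building blocks. Finally, the odd-$r$ case for $SU(2)$ is handled not by the diagonal argument at all but by the splitting $\tau^{SU(2)}_{M,L'}=\tau^{\BZ/2}_{M,L'}\cdot\tau^{SO(3)}_{M,L'}$ of Proposition~\ref{sp}, which requires extending the $SO(3)$ invariant and the symmetry principle to links with arbitrary (not just odd) colors; this extension is itself nontrivial and is carried out in Section~\ref{sys}.
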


\begin{Thm}\label{main2}
The  WRT $SO(3)$-invariant of any
 3-manifold $M$ with  any colored link inside
at any root of unity of odd order is an algebraic integer.
\end{Thm}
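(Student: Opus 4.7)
\medskip

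\noindent\textbf{Proof proposal.} The plan is to deduce Theorem \ref{main2} from Theorem \ref{main1} by invoking the Kirby--Melvin splitting \cite{KM} relating the $SU(2)$ and $SO(3)$ WRT invariants at odd-order roots of unity. Let $\xi$ be a primitive root of unity of odd order $r$ and choose a primitive $2r$-th root of unity $A$ with $A^2=\xi$. Kirby and Melvin established a factorization of the form
\[
\tau_M^{SU(2)}(A)\;=\;\tau_M^{SO(3)}(\xi)\cdot C_M(A),
\]
where $C_M(A)$ is a ``$U(1)$-type'' correction factor given by an explicit Gauss sum depending only on classical invariants of $M$ (the signature of a bounding $4$-manifold and the linking form on $H_1(M;\Z)$).

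First, I would recall this splitting in full and, if necessary, extend it to handle an internal colored link inside $M$: the colored link contribution sits entirely in the $\tau_M^{SO(3)}$ factor, because in the Temperley--Lieb description the odd-colored (abelian) sector decouples from the even-colored sector and its sum over a surgery component yields only the classical Gauss-sum correction, which is unaffected by the internal colors. Next, Theorem \ref{main1} applied to the $2r$-th root $A$ shows that $\tau_M^{SU(2)}(A)$ is an algebraic integer. It therefore suffices to prove that $C_M(A)$ is a unit in the ring of algebraic integers $\Z[A]$.

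For the unit property, one uses that $C_M(A)$ is, up to a root of unity, a quadratic Gauss sum of order a power of $2$ evaluated at $A$, where $A$ has order $2r$ with $r$ odd. A standard cyclotomic-integer analysis shows that such Gauss sums are $2$-units, and in fact units in $\Z[A]$ since the primes above $2$ split tamely in this ring and are coprime to $r$. Combining the three steps, $\tau_M^{SO(3)}(\xi)=\tau_M^{SU(2)}(A)/C_M(A)$ is the quotient of an algebraic integer by a unit of $\Z[A]$, hence is itself an algebraic integer.

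The main obstacle in this scheme is verifying the unit property of $C_M(A)$ in complete generality, regardless of the torsion subgroup of $H_1(M;\Z)$ or the signature of a bounding $4$-manifold; this requires a careful case analysis of the explicit Gauss sums that appear, including for lens spaces and manifolds with $2$-torsion in $H_1$. Should the quotient approach fail in edge cases, an alternative is to adapt the surgery/universal-invariant arguments used to establish Theorem \ref{main1} directly to the $SO(3)$ setting, replacing the full Kirby color by its $SO(3)$ analogue (sum over even colors only) and repeating the integrality analysis in the cyclotomic ring $\Z[\xi]$ associated with the odd order $r$.
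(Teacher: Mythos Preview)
Your approach has a genuine gap. Within this paper the logic runs the other way: Theorem~\ref{main1} at roots of odd order $r$ is deduced \emph{from} Theorem~\ref{main2} via the splitting of Proposition~\ref{sp}, while the direct $SU(2)$ argument in Section~\ref{secsu2} treats only even $r$; so invoking Theorem~\ref{main1} here is circular. More seriously, and independently of the logical order, the correction factor $C_M=\tau^{\BZ/2}_{M,L'}(\xi)$ is \emph{not} a unit in general---it can vanish. With $\ord(\xi^{1/4})=4r$, the proof of Proposition~\ref{sp}(c) exhibits $F^{\BZ/2}_{L\sqcup L'}(\xi)$ (up to a unit) as a product of factors each equal to $2$, $0$, or $\sim\sqrt2$. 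For $M=\R P^3$ (surgery on the framing-$2$ unknot, no link inside) the sole factor is $1+\xi^{r(r-2)/2}=1+(\xi^{1/2})^{r(r-2)}=1+(-1)=0$, since $(\xi^{1/2})^r=-1$ and $r-2$ is odd; hence $\tau^{SU(2)}_{\R P^3}(\xi)=0$ and one cannot recover $\tau^{SO(3)}_{\R P^3}(\xi)$ by division. With the other admissible choice $\ord(\xi^{1/4})=r$, Proposition~\ref{sp}(b) gives $\tau^{\BZ/2}\equiv1$, so $\tau^{SU(2)}=\tau^{SO(3)}$ identically and nothing is gained. Your claimed unit property thus fails precisely for manifolds with $2$-torsion in $H_1$---the case you yourself flagged as delicate.

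The paper instead proves Theorem~\ref{main2} directly. Proposition~\ref{lem} uses the generic-$q$ divisibility Theorem~\ref{thm1} and its root-of-unity refinement Proposition~\ref{div1} to show that each factor in~\eqref{e_diag} is integral, settling the diagonal case. For arbitrary $M$, Proposition~\ref{p402} yields $M\#M\#N$ diagonal of prime type with $N=\#_i\,\L(2^{k_i},-1)$; since $(2^{k_i},r)=1$, each $\tau^{SO(3)}_{\L(2^{k_i},-1)}(\xi)$ is a unit by Lemma~\ref{lens}(a), so $(\tau^{SO(3)}_{M,L'})^2$ is integral and Lemma~\ref{ab}(c) concludes. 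Your fallback ``alternative'' is essentially this route, but not carried out.
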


%\begin{rmk} About normalization. The level of integrality depends on the normalization of the WRT invariants. We compare our normalization with the ones in \cite{KM,MR,Lickorish,RT}
%in Section ???
%\end{rmk}

%{\bf [ReWRITE In the case when $M$ can be obtained by surgery on a diagonal link and $\beta(M)>0$, we can prove even stronger results (see Corollaries \ref{diag} and \ref{main2a}).
%We can drop this, since I think this is marginal.]}

Theorem \ref{main2} is a  generalization of a result in
  \cite{bl} to  manifolds which contain a link inside. 
However, we give here a new  independent proof
along the same lines as in the
 $SU(2)$ case. 
%Since 
%the $SO(3)$ invariant of the pair $(M,L)$
%was previously defined only in the case when  
%the colors of $L$ are odd or equal to 2,
%we extend this definition to  arbitrary colors and prove the invariance.
Theorem \ref{main1} is the main result of the paper. 
The key new ideas used in the proofs are the following.

%For its proof we developed 
%the following  new methods. 
%were developed in this paper and used in the proofs.
%The proof of Theorem \ref{main1}  is
%more difficult than that of Theorem \ref{main2}
%and requires development of new methods.

One of the main tools is a significant generalization of 
some divisibility result (Theorem \ref{thm1}) which was originally
obtained in \cite{Le4} using a
number-theoretical  identity of Andrews' \cite{Andrews}, whose special 
cases are the classical
Rogers-Ramanujan identities.

Further, to include the case of even colored links in  3-manifolds,
we had to introduce a new basis for the 
Grothendieck ring of the quantum $sl(2)$, which is orthogonal to
the odd part of the center with respect to the Rosso form. This led to an important
new  result (Theorem 1.1) generalizing that of Habiro,
which states that the colored Jones polynomial 
%of an algebraically split link 
can be presented as a sum of integral ``blocks''. This result
is proved in the Appendix, and it is of independent interest in the  quantum link
invariant theory.

For manifolds obtained by surgery along links with diagonal linking matrix
we show that the contribution of each integral block to the WRT invariant is  integral
by using our main tool (Theorem \ref{thm1}). 
The general case can be reduced to the diagonal one by
using some  classification results for linking pairings.
However, it is more demanding in the $SU(2)$ case  than in the $SO(3)$
one, since the linking pairings on
abelian groups of even order are  more complicated \cite{KK}.

As a byproduct, we generalize the relationship between
$SU(2)$ and $SO(3)$ invariants at {\rm odd} roots of unity
to the case when a 3-manifold  contains an arbitrary colored link inside.
For empty links and links colored by the fundamental representation,
this relationship was established in \cite{KM} and \cite{MR}, respectively.

At the moment of this writing, our proof cannot be generalized
to higher-ranked Lie groups because we do not have an analog of
Theorem 1.1  (splitting into integral blocks) in those cases.
The paper is  as self-contained as possible. The only 
two  results used without proofs here are
\cite[Theorem 7]{Le4}
and \cite[Theorem 2]{bbl2}.

We organize this paper as follows.
In Section \ref{notations} we fix notations, recall the definition
of the WRT invariant and state a generalization of Habiro's result.
The main strategy of our proofs is explained in
Section \ref{strategy}.
In Section \ref{div} we prove  some divisibility results 
for generic values of the quantum parameter.
Formulas related to roots of unity are proved in Section \ref{root}.
Section 4 deals with the symmetry principle and the splitting of the
$SU(2)$ invariant at odd roots of unity into the product
of the $SO(3)$ and Deloup's invariants.
Section \ref{secd} discusses how 
to construct 3-manifolds that can be obtained by surgery along links with
diagonal linking matrices.
The last two sections are devoted to the proofs of Theorem \ref{main2} and Theorem \ref{main1}, respectively.

\section{The colored Jones polynomial and the WRT invariant} \label{notations}

\subsection{Notations}
Let $q^{1/4}$ be a formal parameter. Set
$$
\cb n := q^{n/2}-q^{-n/2}, \quad \cb n! := \prod_{i=1}^n \cb i, \quad [n] := \frac{\cb n}{\cb1}, \quad
\qbinom nk := \frac{\cb n!}{\cb k! \cb {n-k}!}\ ,
$$
and
$$
(z;q)_m = \prod_{i=0}^{m-1} (1-q^iz),\quad
\binomq mn := \frac{(q^{m-n+1}; q)_n}{(q; q)_n} = q^{(m-n)n/2}\qbinom mn \ .
$$
Throughout this paper, let $\xi$ be a primitive root of unity of order $r$ 
and $\xi^{1/4}$ be a complex number such that $(\xi^{1/4})^4 =\xi$. There are 4 possible choices for $\xi^{1/4}$, and we
will make some restrictions later.

When working in the $SO(3)$ case, we will always assume that $r\geq 3$ is {\it odd}.
In the $SU(2)$ case, $r\geq 2$ will be
 an arbitrary positive integer.

For $f\in \BQ[q^{\pm 1/4}]$, we define
the following evaluation map
$$% \ex^{SO(3)}(f):=f|_{q^{1/4}=\xi^{4^*}},\quad\quad
\ex(f):=f|_{q^{1/4}=\xi^{1/4}}.
$$
It should be noted that although we write $\ev_\xi(f)$, this quantity depends on the
choice of a 4-th root $\xi^{1/4}$ of $\xi$.

If $f$ is a function on positive integers $n_1,\dots,n_k$ with values in $\BQ[q^{\pm 1/4}] $, we define
$$
{\sum_{n_1,\dots,n_k}}^{\xi, SO(3)} \, f := \frac1{4^k}\sum_{\substack{n_j = 0 \\ n_j \text{ odd}}}^{4r-1}\ex(f)\ ,
\quad\quad
{\sum_{n_1,\dots,n_k}}^{\xi, SU(2)}\,  f := \frac{1}{4^k}\sum_{n_j = 0}^{4r-1} \ex(f) \ .
$$

% In what follows the function $f$ has the property that $\ev_\xi(f)$ is periodic with period $4r$.

All 3-manifolds in this paper are supposed to be closed and oriented.
Every link in a 3-manifold is framed, oriented and has ordered components.

%We usually denote by $L\sqcup L'$ a framed link in $S^3$
%with disjoint sublinks $L$ and $L'$. Our convention is that surgery
%along $L$ transforms $(S^3, L')$ into $(M,L')$.

\subsection{The colored Jones polynomial}\label{cj} Suppose
$L$ is a framed oriented link in $S^3$ with $m$ ordered components. For an $m$-tuple of positive integers
$\bn=(n_1,\dots,n_m)$, one has the colored Jones polynomial $J_L(\bn)\in \Zqf$, see e.g. \cite{Tu,MM}. 
The number $n_i$ is usually called the color of the $i$-th component, and stands for
the $n_i$-dimensional irreducible $sl_2$-representation in the theory of quantum link invariants.
 We use the normalization so that
$J_U(n) = [n]$ where $U$ is the unknot with $0$ framing. It is well known that
if $\tilde L$ is obtained from $L$ by increasing the framing on the $i$-th component by 1 then
\be
J_{\tilde L}(\bn) = q^{\frac{n^2_i-1}4} J_L(\bn)\ .
\label{e002}
\ee

Although there are fractional powers $q^{\pm 1/4}$, there exists an integer $a = a(L,\bn)$ such that $J_{L}(\bn)\in q^{a/4} \, 
\BZ[q^{\pm 1}]$.
For a precise formula of $a$ see \cite{Le_Duke}. This formula implies that
if all the colors $n_j$'s are odd, then $J_L(\bn) \in \BZ[q^{\pm1}]$.

\subsection{Habiro's expansion and its generalization}\label{se13}
\def\bs{\mathbf s}
Assume that $L\sqcup L'$ is a framed link in $S^3$ with disjoint sublinks $L$ and $L'$. 
Suppose $L$ has $m$ ordered components and $L'$
has $l$ ordered components.
Fix an $l$-tuple of  positive integers $\bs=(s_1, \dots, s_l)$, and
let's consider $J_{L\sqcup L'}(\bn,\bs)$
 as a function on $m$-tuples $\bn=(n_1,\dots,n_m)$. 
%In the sequel it will be convinient to assume that
%the sublink $L'$ is colored, i.e. $\fs$ is fixed, and 
Since $\bs$ is  fixed, we will remove it from the notation for
simplicity. 
The function $J_{L\sqcup L'}(\bn)$ can be rearranged into
another  function $c_{L\sqcup L'}(\bk)$
  generalizing an important result of Habiro
\cite[Theorem 8.2]{ha}.

To state the result we need to introduce a few notations.
Let $\tilde \ell_{ij}$ be the linking
number between the $i$-th component of $L$ and the $j$-th component of $L'$.
For any $i=1,\dots,m$, we define
\begin{equation} \ve_i \in \{0,1\}  \quad \text{ by } \quad
 \ve_i := \sum^l_{j=1} {\tilde \ell}_{ij} (s_j-1) \pmod 2 \ .\quad
\label{3306}
\end{equation}
% $$ \gr_q(L\sqcup L') :
% = \sum_{1\le i,j\le l }p_{ij} n_i'n_j' + 2\sum_{1\le j\le l}(p_{jj}+1) n_j' \pmod 4,
% \quad \text{where} \quad n_i':= n_i-1.
% $$

\begin{thm}\label{prop.habiro} Assume that $L\sqcup L' \subset S^3$ is  as described above.
%Fix the color $\bs$ on $L'$.
%Suppose that the components of $L$ are ordered in such a way that $\ve_i=0$ for $1\leq i\leq s$ and $\ve_i=1$ otherwise.
%oriented framed  link where $L$ has $m$ ordered components and $L'$ has $l$ ordered components.
Suppose that $L$ has 0 linking matrix.
%Fix
%an $l$-tuple of positive integers $\bs=(s_1, \dots, s_l)$.
 Then for every $m$-tuple $\bk=(k_1,\dots,k_m)$ of non-negative integers with $k=\max(k_1,\ldots,k_m)$
there exists
\be
c_{L\sqcup L'}(\bk)\; \in \;
\frac{(q^{k+1}; q)_{k+1}}{1-q}\; \Z[q^{\pm1/4}]\
\label{e007}
\ee
 such that for every $m$-tuple $\bn=(n_1,\dots,n_m)$ of non-negative integers, 
\begin{equation}\label{eq.jj}
J_{L\sqcup L'}(\bn)
= \sum_{k_i\ge 0} c_{L\sqcup L'}(\bk) \;\prod_{i=1}^m \qbinom{n_i+k_i}{2k_i+1}
\;\cb{k_i}!\;\frac{\lam^{\ve_i}_{n_i}}{\lam^{\ve_i}_{k_i+1}}
\end{equation}
where $\lam_n=q^{n/2}+q^{-n/2}$.

\end{thm}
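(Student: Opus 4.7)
My plan is to reduce to Habiro's original cyclotomic expansion by separating the contribution of the sublink $L'$. Since $L$ has zero linking matrix, Habiro's Theorem~8.2 in \cite{ha} applied to $L$ alone already yields the desired formula in the special case $L'=\emptyset$, with coefficients $c_L(\bk)$ in the required ideal $\frac{(q^{k+1};q)_{k+1}}{1-q}\BZ[q^{\pm 1/4}]$. For a general $L'$ of fixed colors $\bs$, I would work at the level of the universal $\U_q(sl_2)$-invariant, taking the partial trace over each $V_{s_j}$ to close up the components of $L'$. Because $L$ has zero linking matrix, an isotopy argument identifies the resulting element of $\U_q(sl_2)^{\otimes m}$ with $(\omega_1\otimes\cdots\otimes\omega_m)\cdot J_L^{\mathrm{univ}}$, where each $\omega_i\in Z(\U_q(sl_2))$ is a central element depending only on $\bs$ and the linking numbers $\tilde\ell_{ij}$.

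A direct computation of $\omega_i$ from the universal $R$-matrix and the twist element of $\U_q(sl_2)$ shows that the scalar $\varphi_i(n_i)$ by which $\omega_i$ acts on $V_{n_i}$ is a Laurent polynomial in $q^{n_i/2}$ whose behaviour under $n_i\mapsto -n_i$ is controlled by the parity of $\sum_j(s_j-1)\tilde\ell_{ij}$. Explicitly, $\varphi_i(n_i)\in \lam_{n_i}^{\ve_i}\,\BZ[q^{\pm 1/4}][\lam_{n_i}^2]$, so $\varphi_i(n_i)$ decomposes as an integral linear combination of the monomials $\lam_{n_i}^{\ve_i}\lam_{n_i}^{2p}$ for $p\ge 0$. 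The identity $\cb{n-k}\cb{n+k}=\lam_{2n}-\lam_{2k}$, which follows directly from the definition of the $q$-binomial, ensures that this parity analysis is compatible with the Habiro basis elements $\qbinom{n+k}{2k+1}\cb{k}!$.

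The principal obstacle, and the technical heart of the proof, is the following algebraic lemma: multiplication by $\lam_{2n}$ and by $\lam_n$ on the family
\bes
E_k^{(\ve)}(n) \;:=\; \qbinom{n+k}{2k+1}\cb{k}!\;\frac{\lam_n^{\ve}}{\lam_{k+1}^{\ve}},\qquad k\ge 0,\ \ve\in\{0,1\},
\ees
is triangular with coefficients in $\BZ[q^{\pm 1/4}]$, and the level-raising part preserves Habiro's divisibility ideal. The identity above yields a clean three-term recursion for multiplication by $\lam_{2n}$, while multiplication by $\lam_n$ is more delicate because it exchanges the parity $\ve\mapsto 1-\ve$; the particular normalizing denominator $\lam_{k+1}^{\ve}$ in $E_k^{(\ve)}$ is chosen precisely so that this parity-exchange remains integral and introduces no spurious poles. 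Granted this lemma, substituting $\varphi_i(n_i)$ into Habiro's expansion of $J_L(\bn)$ and identifying the resulting coefficients produces $c_{L\sqcup L'}(\bk)$ in the required ideal, completing the proof; the induction on the number of components of $L'$ that this requires is straightforward once the multiplication rules on $E_k^{(\ve)}$ are in hand.
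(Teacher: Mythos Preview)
There is a genuine gap. The factorization you assert in the second paragraph is false. Taking the partial quantum trace over the $V_{s_j}$'s gives an element of $\big(\widetilde{\U_q^{\otimes m}}\big)^\inv$, but it is \emph{not} in general of the form $(\omega_1\otimes\cdots\otimes\omega_m)\cdot J_L^{\mathrm{univ}}$ with central $\omega_i$. The zero linking matrix of $L$ is irrelevant here: what matters is how the components of $L'$ link with $L$ and with each other. If a single component of $L'$ links two different components of $L$, or if the components of $L'$ are linked among themselves, no isotopy untangles this into a tensor product of central elements. Equivalently, your claim would force $J_{L\sqcup L'}(\bn,\bs)=J_L(\bn)\prod_i\varphi_i(n_i)$, which already fails for, say, $L$ the two-component unlink and $L'$ a circle clasping both components.

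Once the factorization is abandoned, the rest of your plan collapses: there is no function $\varphi_i(n_i)$ to expand in $\lambda_{n_i}$, and the ``multiplication lemma'' for $E_k^{(\ve)}$ has nothing to act on. The paper's proof avoids this by never separating $L'$ from $L$ at the scalar level. Instead it introduces the new bases $P_k^{(1)}$ (alongside Habiro's $P_k^{(0)}$) of the Grothendieck ring and expands each $V_{n_i}$ directly as $\sum_k\qbinom{n_i+k_i}{2k_i+1}\frac{\lambda_{n_i}^{\ve_i}}{\lambda_{k_i+1}^{\ve_i}}P_{k_i}^{(\ve_i)}$. The coefficients $c_{L\sqcup L'}(\bk)$ are then partial traces of the full universal invariant $J_T$ along $P_{k_1}^{(\ve_1)'}\otimes\cdots\otimes P_{k_m}^{(\ve_m)'}\otimes V_{s_1}\otimes\cdots\otimes V_{s_l}$. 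The integrality comes from the structure of the $\Z/2$-graded center $(\tilde\U_q^{(\ve)})^\inv$, which is spanned by explicit elements $\sigma_p^{(\ve)}$, together with the orthogonality $\tr_q^{P_k^{(\ve)}}(\sigma_p^{(\ve)})=\delta_{k,p}\,\frac{\{2k+1\}!}{v^\ve\{1\}}\lambda_{k+1}^\ve$ with respect to the Rosso form. Your intuition about parity governed by $\ve_i$ is correct and is exactly what selects which graded piece $\U_q^{(\ve_i)}$ the $i$-th tensor factor lands in after the trace over $L'$; but exploiting it requires these algebraic inputs rather than a factorization that does not exist.
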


For the  case when all $\ve_i=0$, or, in particular, when all $s_i$'s are odd,
the statement is equivalent to
 \cite[Theorem 3]{bbl1}. A proof of Theorem \ref{prop.habiro} is given in Appendix A.
Note that for a fixed $\bn$ the right hand side of \eqref{eq.jj} is a finite sum because $\qbinom{n+k}{2k+1}=0$ if $n \leq k$.

This is the presentation of the colored Jones polynomial as a sum of integral blocks mentioned in Introduction.
The existence of $c_{L\sqcup L'}(\bk) \in \BQ(q^{1/4})$ that satisfies
\eqref{eq.jj} is easy to prove. The real content of Theorem \ref{prop.habiro} is the integrality \eqref{e007}.
% We will make substantial use of this theorem. The point is that
% the dependence of the right hand side of \eqref{eq.jj} on $n_i$ is only through the expression
% $\prod_{i=1}^m \qbinom{n_i+k_i}{2k_i+1}\lam^{\ve_i}_{n_i}$.
%%%%%%%%%%%%%%%%%%%%%%

\subsection{The WRT invariant}\label{wrt}
We review here the definition of the
 WRT $SU(2)$ invariant of a 3-manifold $M$ with a colored link $L'$ inside
  \cite{RT} and its  $SO(3)$ version \cite{KM}.

We use the convention that the pair
$(M, L')$ is obtained  from $(S^3, L')$ by surgery along
 $L$. Here $L'$ is an $\bs$-colored framed link.
 For $G=SU(2)$ or $G=SO(3)$
%an $m+l$-component framed link $L\sqcup L'$,
set
\be\label{sta}
F^G_{L\sqcup L'}(\xi) : = \sumxi^{\xi,G} \left\{J_{L\sqcup L'}(\bn)\prod_{i=1}^m[n_i]\right\}\ .
\ee
For simplicity,
 we assume here that all entries of $\bs$ are odd if  $G=SO(3)$. 
In  general for $G=SO(3)$, we have to multiply
\eqref{sta} by a power of $\xi$, depending on the 
linking matrix of $L'$ and the parity of colors. This is done
  in Section 4.2. Since the additional factor is a unit, it
does not affect integrality.

We want to emphasize that although it is not explicit from the notation, 
%$F^G_{L\sqcup L'}(\xi)$
\eqref{sta} depends on a choice of a 4-th root $\xi^{1/4}$ of $\xi$.

It is known that  $F^G_{L\sqcup L'}(\xi)$ is invariant under the handle slide move 
 and if normalized appropriately, is an invariant of the pair $(M,L')$.

Let $U^\pm$ be the unknot with $\pm1$ framing. It is easy to see that $F^G_{U^-}(\xi)$ is the complex conjugate of $F^G_{U^+}(\xi)$.
Let
$$ \cD^G:= |F^G_{U^+}(\xi)| = \sqrt {F^G_{U^+}(\xi) \, F^G_{U^-}(\xi)}\ \ .$$
This number is called the rank of a TQFT in \cite{Tu}.
We normalize by dividing \eqref{sta} by certain powers of 
$F^G_{U^\pm}(\xi)\neq 0$. Hence, we want to know when $F^G_{U^\pm}(\xi)\neq 0$.
The following is probably known. For completeness we include a proof in Section \ref{sec3.3}.

\begin{lemma} \label{lem12} One has $F^G_{U^\pm}(\xi) =0$ if and only if 
\be \text{$G=SU(2)$ and $\xi^{1/4}$ has order $2 \, \ord(\xi)=2r$.} \tag{$\star$}
\label{nd}
\ee
\end{lemma}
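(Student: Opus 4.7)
The plan is to reduce the vanishing of $F^G_{U^\pm}(\xi)$ to that of a quadratic Gauss sum and then invoke the classical evaluation. Since $U^+$ is the unknot with framing $+1$, formula \eqref{e002} gives $J_{U^+}(n)=q^{(n^2-1)/4}[n]$, so by \eqref{sta} the summand equals $q^{(n^2-1)/4}[n]^2$. A short algebraic manipulation yields
\[
(q^{1/2}-q^{-1/2})^2\, q^{(n^2-1)/4}[n]^2 \;=\; q^{((n+2)^2-5)/4}\;-\;2\,q^{(n^2-1)/4}\;+\;q^{((n-2)^2-5)/4}.
\]
Setting $q=\xi$ and $v=\xi^{1/4}$, the right hand side is a $\Z$-linear combination of $v^{(n\pm 2)^2}$ and $v^{n^2}$. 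Using that $v^{n^2}$ is periodic in $n$ of period $4r$ (immediate from $v^{4r}=1$) and that $n\mapsto n\pm 2$ preserves parity, the sums $\sum_n v^{(n\pm 2)^2}$ collapse to $\sum_n v^{n^2}$ in both the $SU(2)$ and $SO(3)$ cases. This yields
\[
F^G_{U^+}(\xi) \;=\; \frac{-\,v^{-1}}{2(\xi-1)}\;\Sigma_G,\qquad \Sigma_G \;:=\; \sum_n v^{n^2},
\]
with $n$ running over $\{0,\dots,4r-1\}$ for $G=SU(2)$ and over the odd such $n$ for $G=SO(3)$. Since the prefactor is nonzero, the lemma reduces to deciding when $\Sigma_G=0$.

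For $G=SU(2)$, set $d=\ord(v)$, which divides $4r$. Because $v^{n^2}$ has period $d$ in $n$, we have $\Sigma_{SU(2)} = (4r/d)\, G(b,d)$, where $G(b,d)=\sum_{n=0}^{d-1} e^{2\pi i b n^2/d}$ is the classical Gauss sum for $v=e^{2\pi i b/d}$, $\gcd(b,d)=1$. The standard evaluation of quadratic Gauss sums gives $G(b,d)=0$ iff $d\equiv 2\pmod 4$. A short case check using $\ord(v^4)=r$ shows the possible values of $d$ are $d=4r$ when $r$ is even and $d\in\{r,2r,4r\}$ when $r$ is odd; thus $d\equiv 2\pmod 4$ occurs exactly when $r$ is odd and $d=2r$, which is precisely condition ($\star$).

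For $G=SO(3)$ (so $r$ is odd), substitute $n=2m+1$ to get $n^2=4m(m+1)+1$, hence $v^{n^2}=v\,\xi^{m(m+1)}$. By $r$-periodicity of $\xi^{m(m+1)}$ in $m$,
\[
\Sigma_{SO(3)} \;=\; 2v \sum_{m=0}^{r-1} \xi^{m(m+1)}.
\]
Let $u:=\xi^{(r+1)/2}$, which satisfies $u^2=\xi$ and is itself a primitive $r$-th root of unity (as $r$ is odd). Then $\xi^{m(m+1)}=u^{(2m+1)^2-1}$, and because $m\mapsto 2m+1$ is a bijection of $\Z/r$, the remaining sum is a classical Gauss sum $G(c,r)$ with $\gcd(c,r)=1$. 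Since $r$ is odd, $G(c,r)\neq 0$, so $F^{SO(3)}_{U^+}(\xi)\neq 0$ for every valid choice of $\xi^{1/4}$, consistent with ($\star$) excluding the $SO(3)$ case.

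The case of $U^-$ follows from the identity $F^G_{U^-}(\xi)=\overline{F^G_{U^+}(\xi)}$ recalled just before the lemma, so its vanishing set coincides with that of $F^G_{U^+}(\xi)$. The only step needing care is the bookkeeping that relates $\ord(\xi^{1/4})$ to $\ord(\xi)=r$ and translates $d\equiv 2\pmod 4$ into condition ($\star$); the remainder is the routine Gauss-sum manipulation sketched above.
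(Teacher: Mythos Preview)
Your proof is correct and follows essentially the same route as the paper: both reduce $F^G_{U^\pm}(\xi)$ to a quadratic Gauss sum and then invoke the classical vanishing criterion $d\equiv 2\pmod 4$. The paper's version is slightly more terse (it lists the four cases for $\ord(\xi^{1/4})$ directly and appeals to its Proposition~\ref{lGauss}(b)), while you carry out the case analysis on $d=\ord(\xi^{1/4})$ more explicitly, but the substance is the same.

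One small slip to fix in the $SO(3)$ paragraph: with $u=\xi^{(r+1)/2}$ you have $u^2=\xi$, so
\[
u^{(2m+1)^2-1}=u^{4m(m+1)}=(u^2)^{2m(m+1)}=\xi^{2m(m+1)},
\]
not $\xi^{m(m+1)}$ as written. The repair is immediate: either take $u=\xi^{4^*}$ with $4\cdot 4^*\equiv 1\pmod r$ (so that $u^4=\xi$ and your identity holds verbatim), or simply complete the square directly,
\[
\sum_{m=0}^{r-1}\xi^{m(m+1)}=\xi^{-(2^*)^2}\sum_{m=0}^{r-1}\xi^{(m+2^*)^2}=\xi^{-(2^*)^2}\,G(1,0,\xi)\neq 0,
\]
since $r$ is odd. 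Either way your conclusion stands.
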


In \cite{KM, RT} and \cite{Lickorish} it is assumed that $\ord(\xi^{1/4}) = 4\, \ord(\xi)$. However, there are other cases when $F^G_{U^\pm}(\xi) \neq 0$.
Here we  consider all of them. 

In the entire paper we will assume that condition (\ref{nd}) is not satisfied, so that $F^G_{U^\pm}(\xi)\neq 0$.

Then the WRT  invariant of the pair $(M,L')$ is defined by
\be\label{inv}
\tau^{G}_{M, L'}(\xi) = \frac{F^G_{L\sqcup L'}(\xi)}{(F^G_{U^+}(\xi))^{\beta_+}(F^G_{U^-}(\xi))^{\beta_-}\,  (\cD^G)^\beta},
\ee
where  $\beta_+,\beta_-$ and $\beta$ are respectively   the number of positive, negative, and 0
eigenvalues of the linking matrix of $L$.

The invariant $\tau^{G}_{M, L'}(\xi)$ is multiplicative with respect to
connected sum. If $-M$ is $M$ with the reverse orientation, then $\tau^G_{-M}(\xi)$ is the complex conjugate of $\tau^G_{M}(\xi)$, and
$\tau^G_{S^3}(\xi)=1$.

\begin{rmk}
We will prove later that $\cD^G \in \Z[\xi^{1/4}, e_8]$, where $e_8=\exp(\pi\sqrt{-1}/4)$.
Note that $\Z[\xi^{ 1/4}, e_8]=\Z[\exp(2\pi \sqrt{-1}/t)]$, where $t=8r$ if $r$ is odd
and $t=4r$ if $r$ is even. In the last case, $e_8\in \Z[\xi^{ 1/4}]$. 

%the adjunction of the 8-th root of unity is needed only
% if $r$ is odd. When $r$ is even, $e_8\in \Zxf$.
%Moreover, $\Z[\xi^{\pm 1/4}, e_8]$ is always a cyclotomic ring.

Hence a priori,
$\tau^{G}_{M, L'}(\xi) \in \BQ(\xi^{1/4}, e_8)$. 
Since the ring of integers of $\BQ(\xi^{1/4}, e_8)$ is $\BZ[\xi^{1/4}, e_8]$, 
our invariant is algebraically integral if it belongs to
 $ \BZ[\xi^{1/4}, e_8]$.

Further, if $G=SO(3)$, $M$ is a rational homology 3-sphere, and all the $s_i$'s are odd, 
then $\tau^{SO(3)}_{M, L'}(\xi) \in \BQ(\xi)$ by definition. 
So, in that case integrality means that 
$\tau^{SO(3)}_{M, L'}(\xi) \in \Z[\xi]$ for any root of unity $\xi$ of odd order.
\end{rmk}

\subsection*{Relations with other invariants}
If we put $\xi^{1/4}:= \exp(\pi \sqrt {-1}/2r)$, then our invariant $\tau_M^{SU(2)}(\xi)$ and $\tau_M^{SO(3)}(\xi)$ are 
respectively $\tau_r(M)$ and $\tau'_r(M)$ in \cite{KM}. In that case, our $\cD^{SU(2)}$ equals to $b^{-1}$ in the notation of \cite{KM}.

Again, if $\xi^{1/4}= \exp(\pi \sqrt {-1}/2r)$,
the original Reshetikhin-Turaev invariant \cite{RT} differs from $\tau_r(M)$ by a multiplication with a certain
root of unity, so this does not affect  integrality.

The set of invariants considered in \cite{MR} coincides with ours assuming  $r$ is an odd prime.
More precisely, the invariants $I_{2r}(M)$  and $I_{r}(M)$, defined in \cite{MR} 
as functions of a variable $A$,
coincide with ours $\tau^{SU(2)}_M(\xi)$
and $\tau^{SO(3)}_M(\xi)$ after setting
$A=-\xi^{1/4}$ and $A=-\xi^{(r+1)^2/4}$, respectively.
 At these roots of unity, the $SO(3)$ invariants determines those for $SU(2)$.

In \cite{Lickorish}, Lickorish 
chose a different normalization and worked with 
$\tau^{SU(2)}_M(\xi)(\cD^{SU(2)})^\beta$ in our notation.
%$$
%\frac{F^G_{L\sqcup L'}(\xi)}{
%(F^G_{U^+}(\xi))^{\beta_+}(F^G_{U^-}(\xi))^{\beta_-}}\, .
%$$
%{\bf CHECK, ALSO compare our invariants with those in \cite{RT,MR,Lickorish}.}
Clearly, integrality of this  invariant will follow from the integrality of
$\tau^G_{M,L'}(\xi)$.

\subsection{Diagonal case}
Of particular importance  is the  case when the
 linking matrix of $L$ is  a diagonal matrix $\diag(b_1, \ldots, b_m)$, $b_i\in \Z$ for any $i$.
Let $L^0$ be the framed link obtained from $L$ by switching
all the framings to $0$. Recall from \eqref{3306}
that for $1\leq i\leq m$, $ \ve_i := \sum^l_{k=1} {\tilde \ell}_{ik} (s_k-1) \pmod 2$.
Using \eqref{e002} and \eqref{eq.jj},  we
can rewrite $F^G_{L\sqcup L'}(\xi)$ as follows:
\begin{equation}\label{fl}
F^G_{L\sqcup L'}(\xi) = \sum_{k_i\ge 0} \ex (c_{L^0\sqcup L'}(\bk)/\cb 1^m )
\prod_{i=1}^m H^G(k_i,b_i, \ve_i)
\end{equation}
%where $\bs$ is odd when $G=SO(3)$, 
where 
\begin{equation}\label{eq.h}
H^G(k, b, \ve) := \sumx^{\xi, G} q^{b\frac{n^2-1}4} \qbinom{n+k}{2k+1}\, \frac{\lam^\ve_n}{\lam^\ve_{k+1}}
\cb{k}!\cb{n}\ .
\end{equation}

By \eqref{sta} and \eqref{eq.h} we also have
\be\label{tri}
F^G_{U^\pm}(\xi) = \frac{H^G(0,\pm1,0)}{\ex(\cb 1)} \ .
\ee
From \eqref{fl} and \eqref{tri} we get the following.

\begin{prop}\label{ptau-diag}  Suppose the linking matrix of $L$ is  a diagonal matrix $\diag(b_1, \ldots, b_m)$
with exactly  $t$ non-zero  elements $b_1, \ldots, b_t$. 
Assume the entries of $\bs$ are 
 odd when $G=SO(3)$.
Then
\be
\tau^G_{M, L'}(\xi) =  
\sum_{k_i = 0}^{\lfloor\frac{r-2}2\rfloor} \ex (c_{L^0\sqcup L'}(\bk)) \prod_{i=1}^t \frac{H^G(k_i, b_i,\ve_i)}{H^G(0, \sn(b_i),0)}
\prod^m_{i=t+1} \frac{H^G(k_i, 0,\ve_i)}{\ex(\{1\}) \cD^G} \ ,
\label{e_diag}
\ee
where  $\sn(b_i)$ is the sign of $b_i$. 
\end{prop}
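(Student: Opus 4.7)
The statement is essentially bookkeeping: combine the definition \eqref{inv} of $\tau^G_{M,L'}(\xi)$ with the expansions \eqref{fl} and \eqref{tri}, then observe that at $q=\xi$ the sum over $\bk$ truncates. First I would unpack the denominator in \eqref{inv}. Since the linking matrix of $L$ is $\diag(b_1,\ldots,b_m)$ with $b_1,\ldots,b_t$ nonzero and $b_{t+1}=\cdots=b_m=0$, we have $\beta_\pm=\#\{i\le t: \sn(b_i)=\pm 1\}$ and $\beta=m-t$. Hence, using \eqref{tri},
\bas
(F^G_{U^+}(\xi))^{\beta_+}(F^G_{U^-}(\xi))^{\beta_-}(\cD^G)^{\beta}
= \ex(\cb{1})^{-t}\,\Bigl(\prod_{i=1}^t H^G(0,\sn(b_i),0)\Bigr)(\cD^G)^{m-t}.
\eas

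Next I substitute \eqref{fl} into the numerator, split the product $\prod_{i=1}^m H^G(k_i,b_i,\ve_i)$ into the factors with $i\le t$ (where $b_i\neq 0$) and those with $i>t$ (where $b_i=0$), and cancel the extra factors of $\ex(\cb{1})$. What comes out is exactly
\bas
\tau^G_{M,L'}(\xi) = \sum_{k_i\ge 0}\ex\bigl(c_{L^0\sqcup L'}(\bk)\bigr)\prod_{i=1}^t\frac{H^G(k_i,b_i,\ve_i)}{H^G(0,\sn(b_i),0)}\prod_{i=t+1}^m\frac{H^G(k_i,0,\ve_i)}{\ex(\cb 1)\,\cD^G},
\eas
which is the desired formula \emph{without} the truncation bound on the $k_i$. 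The factor $\ex(\cb 1)$ in the second product is harmless under the standing assumption $(\star)$ fails, which is precisely what guarantees $F^G_{U^\pm}(\xi)\neq 0$ and hence the cancellation is legitimate.

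The only remaining point is the truncation $k_i\le \lfloor (r-2)/2\rfloor$. By Theorem \ref{prop.habiro}, $c_{L^0\sqcup L'}(\bk)$ is divisible by $(q^{k+1};q)_{k+1}/(1-q)$ where $k=\max_i k_i$; this product equals $\prod_{j=k+1}^{2k+1}(1-q^j)/(1-q)$. Evaluating at $q=\xi$ of order $r$, a quick check of cases $r$ odd/even shows the interval $[k+1,2k+1]$ contains a multiple of $r$ as soon as $k\ge \lfloor r/2\rfloor$, so $\ex(c_{L^0\sqcup L'}(\bk))=0$ whenever some $k_i\ge \lfloor r/2\rfloor=\lfloor (r-2)/2\rfloor+1$. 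This truncates the sum as claimed.

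I do not foresee a real obstacle: the argument is algebraic manipulation together with one elementary order-of-vanishing check. The subtlest point is arguably the bookkeeping of the factor $\ex(\cb{1})^{m-t}$ produced in the numerator versus $\ex(\cb{1})^{-t}$ produced in the denominator from \eqref{tri}; care is needed so that the surviving $\ex(\cb 1)$ lands in the $i>t$ factors of the final product rather than in the $i\le t$ ones. Once that accounting is right, the formula \eqref{e_diag} drops out directly.
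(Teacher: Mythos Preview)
Your proposal is correct and matches the paper's approach exactly: the paper simply says the formula follows from \eqref{fl} and \eqref{tri}, and explains the truncation in a remark via $(\xi^{k+1};\xi)_{k+1}=0$ for $k>(r-2)/2$, which is precisely the computation you spell out. One small clarification: the nonvanishing of $\ex(\cb 1)=\xi^{1/2}-\xi^{-1/2}$ is automatic (it would require $\xi=1$), so it does not depend on the failure of condition $(\star)$; that condition is only needed to ensure $F^G_{U^\pm}(\xi)\neq 0$ so that $\tau^G$ is defined in the first place.
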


Note that in the above sum the index $k_i$ is from 0 to $\lfloor\frac{r-2}2\rfloor$. This is because
$(\xi^{k+1};\xi)_{k+1}=0$ when $k>(r-2)/2$, so $\ex (c_{L^0\sqcup L'}(\bk))=0$ when
$k=\max\{k_i\}>(r-2)/2$ according to
Proposition \ref{prop.habiro}.

To allow an arbitrary coloring $\bs$ of $L'$ for $G=SO(3)$, 
we have to multiply the right hand side
of \eqref{e_diag} by a unit, defined in Section 4.2.

We say that $M$ is
{\em diagonal of prime type},  when $M$ can be obtained
by surgery along a link with diagonal linking matrix whose entries are (up to a sign)
0, 1 or prime powers.

\subsection{Strategy for the proof of Theorems \ref{main1} and \ref{main2}}\label{strategy}
We first prove the integrality of $\tau^G_{M,L'}(\xi)$ for the case when
$M$ is diagonal of prime type.
By \eqref{e_diag},  in this case it suffices to show that
$$\frac{H^G(k, b,\ve)}{H^G(0, \sn(b),0)}\quad{\text
 {and}}\quad \frac{H^G(k,0,\ve)}{(1-\xi)\cD^G}$$
are  algebraic integers when $0\le k \le \lfloor\frac{r-2}2\rfloor$.
This is proved in Proposition \ref{lem}
for $G=SO(3)$ and in Proposition \ref{lem42} for $G=SU(2)$, under assumptions $r$ 
is odd and even, respectively. 

%The case $\ve=1$ can be handled by symmetry principle
%when $G=SO(3)$. Further $H^{SU(2)}(k,b,1)$ is only nonzero when $b$ is odd and, in this case, is shown to be divisible by
%$H^{SU(2)}(0,\sn(b),0)$.

The general case can be reduced to the diagonal one 
of prime type by applying some
 standard results on diagonalization, presented in Section \ref{secd}.
 Roughly speaking, $M\#M$ becomes
diagonal of prime type after  adding a diagonalizing manifold $N$, which is a connected sum of
some simple lens spaces. In the $SO(3)$ case, this already solves the problem, since
 the WRT invariant of $N$ is invertible. In the $SU(2)$ case, the WRT invariants of $N$ might be 0. We show
that there is an odd colored link $L \subset N$ such that
 $\tau^{SU(2)}_{N,L}$ is integral and non-zero. However, another difficulty arises since $\tau^{SU(2)}_{N,L}$ is not invertible.
To overcome this difficulty  we will look at the connected sum of many copies of $M\#M$ with $(N,L)$, which we
will show to be diagonal of prime type.
Further, we make substantial use of the fact that in any Dedekind domain, every
ideal has a unique prime factorization.

The case $G=SU(2)$ and $r$ odd is solved in Section 4. There we generalize the splitting formula of
Kirby and Melvin \cite{KM} by
showing  that
the $SU(2)$ invariant of any 3-manifold with a colored link 
inside at a root of unity of odd order
is a product of the $SO(3)$ invariant and another integer invariant, previously defined by Deloup.

\section{Basic divisibility: the case of generic $q$} \label{div}

 In this section we establish a divisibility result for generic $q$ which will help us to prove
that each factor of \eqref{e_diag} is integral.
%Propositions \ref{lem},  \ref{lem42} in later sections.

\subsection{The ideal $I_k$}\label{secI}
Let $I_k$ be the ideal of $\Zqz$ generated by $(q^az; q)_k$ for all $a \in \BZ$. This ideal plays an important role in the theory of quantum invariants, see \cite{ha,Le4,HL}.

We will use the following characterization of $I_k$, which is Proposition 4.3 of \cite{Le4}.

\begin{prop}\label{ideal}
% (a) $I_n$ is the ideal of $\BZ[z^{\pm 1}, q^{\pm 1}]$  generated by $ (z q^a;q)_n$ with $a \in\BZ$.
The ideal $I_k$ is the set of all $f \in \Zqz$ such that $f(q^b,q)$ is divisible by $(q;q)_k$ for every $b \in \BZ$.
\end{prop}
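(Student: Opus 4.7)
Write $J_k$ for the set on the right-hand side of the proposition; we must show $I_k=J_k$.

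The inclusion $I_k\subseteq J_k$ reduces, by $\Zqz$-linearity, to verifying each generator: $(q^a z;q)_k$ evaluated at $z=q^b$ equals $(q^{a+b};q)_k$, and we need $(q;q)_k$ to divide this in $\Z[q^{\pm 1}]$ for every $c:=a+b\in\BZ$. When $c\geq 1$ this is the classical identity $(q^c;q)_k=\binomq{c+k-1}{k}(q;q)_k$ with the $q$-binomial coefficient lying in $\Z[q]$. When $1-k\leq c\leq 0$ the factor $(1-q^{c+i})$ with $c+i=0$ vanishes, so the product is $0$. When $c\leq -k$ the substitution $1-q^{-m}=-q^{-m}(1-q^m)$ rewrites $(q^c;q)_k$ as a unit times $(q^{1-c-k};q)_k$, reducing to the first case.

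For $J_k\subseteq I_k$, take $f\in J_k$; multiplying by a power of $z$ (a unit in $\Zqz$) we may assume $f\in\Z[z,q^{\pm 1}]$. The family $\{(z;q)_n\}_{n\geq 0}$ is an $R$-module basis of $R[z]$, with $R:=\Z[q^{\pm 1}]$, because $(z;q)_n$ has $z$-degree $n$ with unit leading coefficient $(-1)^n q^{n(n-1)/2}$. Expanding $f=\sum_{n\geq 0} a_n(q)(z;q)_n$ and using $(z;q)_n=(z;q)_k\,(q^k z;q)_{n-k}\in I_k$ for $n\geq k$, the tail $\sum_{n\geq k} a_n (z;q)_n$ lies in $I_k$. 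Hence it suffices to show that the head $h:=\sum_{n=0}^{k-1} a_n(z;q)_n$, which inherits membership in $J_k$, lies in $I_k$.

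For $b\geq 1$ the identity $(q^b;q)_n=\binomq{b+n-1}{n}(q;q)_n$ gives $h(q^b,q)=\sum_{n=0}^{k-1} a_n(q)\,\binomq{b+n-1}{n}\,(q;q)_n$, and the hypothesis $(q;q)_k\mid h(q^b,q)$ for all $b\in\BZ$ becomes a Vandermonde-type divisibility system in the $a_n$. A cyclotomic analysis, based on the factorization $(q;q)_k=\prod_{d=1}^{k}\Phi_d(q)^{\lfloor k/d\rfloor}$ and Vandermonde invertibility modulo each $\Phi_d(q)^{\lfloor k/d\rfloor}$ (the infinitely many values of $b$ supply enough distinct evaluation points modulo each prime power), yields the coefficient-wise divisibility $(q;q)_k/(q;q)_n\mid a_n$ in $R$ for every $0\leq n<k$. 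Conversely, an explicit $R$-linear combination of the generators $(q^a z;q)_k$ shows that $\bigl((q;q)_k/(q;q)_n\bigr)(z;q)_n\in I_k$: for instance, $(1-q^2)(1-z)=(z;q)_2-q^2(q^{-1}z;q)_2\in I_2$ handles $k=2$, $n=1$, and the general case follows by induction on $k-n$ or via a $q$-Lagrange interpolation. Combining these two assertions writes $h$ as an element of $I_k$, completing the proof.

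The main obstacle is the cyclotomic analysis in the last paragraph, translating the global divisibility $(q;q)_k\mid h(q^b,q)$ for all $b$ into the coefficient-wise statements $(q;q)_k/(q;q)_n\mid a_n$. A naive finite Vandermonde argument fails because the relevant determinants share cyclotomic factors with $(q;q)_k$; the full strength ``for every $b\in\BZ$'' is essential in order to invert the system modulo each cyclotomic prime to the correct multiplicity.
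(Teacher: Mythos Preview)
The paper does not prove this proposition; it is quoted as Proposition~4.3 of \cite{Le4}. So there is no in-paper argument to compare against directly.

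Your overall strategy is the standard one and is correct: the inclusion $I_k\subseteq J_k$ is routine, and for $J_k\subseteq I_k$ you expand in the basis $\{(z;q)_n\}$, peel off the tail $n\ge k$, and reduce to showing the two assertions (i)~$(q;q)_k/(q;q)_n\mid a_n$ for $0\le n<k$, and (ii)~$\bigl((q;q)_k/(q;q)_n\bigr)(z;q)_n\in I_k$. Both are true, but your sketches are too loose, and the Vandermonde framing for (i) is misleading.

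For (ii) there is a one-line reduction: from
\[
(q^a z;q)_k-(q^{a+1}z;q)_k=-q^a z(1-q^k)(q^{a+1}z;q)_{k-1}
\]
one reads off $(1-q^k)I_{k-1}\subseteq I_k$, and iterating gives $(q^{n+1};q)_{k-n}\,I_n\subseteq I_k$, in particular $\bigl((q;q)_k/(q;q)_n\bigr)(z;q)_n\in I_k$. No $q$-Lagrange interpolation is needed.

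For (i), the ``Vandermonde invertibility modulo $\Phi_d^{\lfloor k/d\rfloor}$'' picture does not work as stated: modulo $\Phi_d$ the powers $q^b$ take only $d$ values, so having infinitely many $b$ does not by itself supply enough independent evaluations. The clean argument is \emph{triangular}: evaluate at $z=q^{-m}$ for $m=0,1,\dots,k-1$. Since $(q^{-m};q)_n=0$ for $n>m$, while $(q^{-m};q)_n$ is a unit times $(q;q)_m/(q;q)_{m-n}$ for $n\le m$, the system is lower triangular. By induction on $m$, the terms with $n<m$ contribute multiples of $(q;q)_k$ (using $\binom{m}{n}_q\in\Z[q]$), so the hypothesis $(q;q)_k\mid h(q^{-m},q)$ forces $(q;q)_k\mid a_m(q;q)_m$, whence $(q^{m+1};q)_{k-m}\mid a_m$ in the UFD $\Z[q^{\pm1}]$.

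With these two replacements your argument becomes a complete proof.
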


% Let $I_n$ be the ideal of $\BZ[z^{\pm 1}, q^{\pm 1}]$ generated by $(z; q)_ k\, (q^{k+1};q)_{n-k}, k = 0, 1, . . . , n.$ Later we will use the following two equivalent definitions of this ideal.

 We will often use the following $q$-binomial formula
%see e.g. \cite[Proposition IV.2.7]{Ka},
\begin{equation}\label{eq.newton}
(q^az; q)_k = \sum_{j=0}^k (-1)^j \binomq kj q^{\binom j2 + aj}\, z^j\ .
\end{equation}

%Proposition \ref{ideal} says that $f(q^b,q)/(q;q)_n \in \BZ[q^{\pm 1}]$  for every $b \in \BZ$ if and only if $f\in  I_n$.

\subsection{Divisibility for generic $q$}\label{sec.div}
%For any integer $a$ set
%$(a)_q := (q^a)_1 = (1-q^a)$,
For  a positive integer $k$ let
\begin{equation}\label{xn}
X_k := \frac{(q; q)_k}{(q; q)_{\lfloor k/2\rfloor}} = \prod_{j=\lfloor k/2\rfloor+1}^k (1-q^j)\ .
\end{equation}

A map $Q:\BZ \to \BZ$ is said to be an {\em integral quadratic form} if
$$Q(n) = a_2 n^2 + a_1 n + a_0$$ for some
integers $a_2, a_1, a_0$. 
%We say that $Q$ is {\it degenerate} if $a_2=0$.

For a quadratic form $Q$ let
 $\l Q : \Zqz\to \Zq$ be the $\Zq$-linear map defined by
$$
\l Q (z^j) = q^{Q(j)}\ .
$$
% This map was first introduced in \cite{bbl2} and is called the Laplace transform.
Note that this map is not an algebra homomorphism if $a_2$ or $a_0\ne 0$.

Let $\sigma$ be the $\Z[q^{\pm 1}]$-algebra automorphism of $\Zqz$ defined by 
$\sigma(z) = z^{-1}$. An important observation is that
if $a_1=0$, then $\l Q \sigma=\l Q$.

\begin{thm}\label{thm1}
For an arbitrary integral quadratic form $Q$  and any $f(z,q) \in I_k$, the element
$\l Q(f)$ is divisible by $X_k$, i.e.
$$\l Q(f)\;\in\; X_k \;\Z[q^{\pm 1}]\, .$$ 
%Furthermore, if $Q$ is degenerate then $\l Q(f)$ is divisible by $(q;q)_k$. 
\end{thm}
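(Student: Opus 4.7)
The plan has three stages: reduce the quadratic form, reduce to a generating set of the ideal, and invoke a Rogers--Ramanujan-type divisibility.

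\emph{Stage 1: simplify $Q$.} For each $c \in \BZ$, the $\BZq$-algebra automorphism $T_c$ of $\Zqz$ defined by $T_c(z) = q^c z$ preserves $I_k$, since $T_c\bigl((q^a z; q)_k\bigr) = (q^{a+c} z; q)_k$. A direct monomial computation gives $\l Q \circ T_c = \l{Q_c}$, where $Q_c(j) := Q(j) + cj$. Writing $Q(j) = a_2 j^2 + a_1 j + a_0$, put $Q'(j) := Q(j) - a_1 j = a_2 j^2 + a_0$ and $f' := T_{a_1}(f) \in I_k$; then $\l Q = \l{Q'} \circ T_{a_1}$, so $\l Q(f) = \l{Q'}(f') = q^{a_0}\,\l{Q^*}(f')$ with $Q^*(j) = a_2 j^2$. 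Since $q^{a_0}$ is a unit in $\BZq$, it suffices to prove the theorem when $Q(j) = a_2 j^2$; the crucial new feature of this reduced case is that $\l Q$ is then $\sigma$-invariant.

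\emph{Stage 2: reduce to generators.} The ideal $I_k$ is spanned as a $\BZq$-module by $\{z^j (q^a z; q)_k : a, j \in \BZ\}$. For $Q(j) = a_2 j^2$ the identity $Q(j+m) = Q(j) + 2 a_2 j m + Q(m)$, combined with the $q$-binomial expansion \eqref{eq.newton}, yields
\begin{equation*}
\l Q\bigl(z^j (q^a z; q)_k\bigr) \;=\; q^{a_2 j^2}\,\l Q\bigl((q^{a+2a_2 j} z; q)_k\bigr).
\end{equation*}
Since $q^{a_2 j^2}$ is a unit, it therefore suffices to prove $\l Q\bigl((q^a z; q)_k\bigr) \in X_k \BZq$ for every $a \in \BZ$. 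Expanding once more via \eqref{eq.newton},
\begin{equation*}
\l Q\bigl((q^a z; q)_k\bigr) \;=\; \sum_{j=0}^{k} (-1)^j \binomq{k}{j}\, q^{\binom{j}{2} + aj + a_2 j^2}.
\end{equation*}

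\emph{Stage 3: apply the Andrews-type identity.} The task reduces to showing that for all $a, a_2 \in \BZ$ the one-variable $q$-sum above is divisible by $X_k = \prod_{j = \lfloor k/2 \rfloor + 1}^{k}(1 - q^j)$ in $\BZq$. This is exactly the type of sharpened divisibility handled by \cite[Theorem 7]{Le4}, whose proof uses Andrews' number-theoretical identity generalizing the Rogers--Ramanujan identities. The hard part will be matching our general quadratic exponent to the precise shape required by that theorem: the $\sigma$-invariance established in Stage 1 provides the symmetry $j \leftrightarrow k-j$ (together with $\binomq{k}{j} = \binomq{k}{k-j}$) that pairs terms and factors out the extra $(q;q)_{\lfloor k/2 \rfloor}$ needed to sharpen the na\"ive $(q;q)_k$-divisibility, coming from Proposition \ref{ideal} after a specialization, into $X_k$-divisibility. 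Once this matching is made explicit, the conclusion follows directly from \cite[Theorem 7]{Le4}.
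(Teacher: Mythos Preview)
Your Stages 1 and 2 are correct and match the paper's reductions closely. Stage 3, however, is not a proof but a hope, and the hope does not pan out.

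The paper explicitly calls Theorem~\ref{thm1} a \emph{generalization} of \cite[Theorem~7]{Le4}; that earlier result is used only once, inside Lemma~\ref{lemma1}(a), to handle the very specific sum $\sum_{j=0}^{2l+1}(-1)^j\qbinom{2l+1}{j}q^{a_2(j-l)^2}$ (odd length, perfectly centered quadratic). Your Stage~3 needs divisibility of $\sum_{j=0}^{k}(-1)^j\binomq{k}{j}q^{\binom{j}{2}+aj+a_2j^2}$ for \emph{every} integer $a$, and this is \emph{not} covered by \cite[Theorem~7]{Le4}; indeed, it is exactly Corollary~\ref{cor}, which the paper derives \emph{from} Theorem~\ref{thm1}, not the other way around. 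Your appeal to the symmetry $j\leftrightarrow k-j$ does not help: that substitution sends the linear coefficient $b$ to $-2a_2k-b$, so the sum is symmetric only for the single value $b=-a_2k$, not for general $a$. Your side remark that Proposition~\ref{ideal} yields a na\"ive $(q;q)_k$-divisibility of $\l Q(f)$ is also incorrect, since $\l Q$ is not evaluation at $z=q^b$ when $a_2\neq 0$.

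What is actually missing is the mechanism the paper supplies: induction on $k$, using the telescoping identity
\[
z^m(q^{a+1}z;q)_k - z^m(q^a z;q)_k = q^a(1-q^k)\,z^{m+1}(q^{a+1}z;q)_{k-1}
\]
to reduce to a single convenient value $a=-\lfloor k/2\rfloor$, followed by separate (and nontrivial) arguments for $k$ odd and $k$ even, both feeding into Lemma~\ref{lemma1}, which is where \cite[Theorem~7]{Le4} finally enters. The even case in particular requires the auxiliary element $y_l=z^{-l}(1-z^{-1})(q^{-l}z;q)_{2l+1}$ and a genuine use of $\l Q\sigma=\l Q$ to halve the problem. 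None of this structure is visible in your Stage~3.
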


\begin{rmk}
This theorem will be used substantially. It is a generalization of
 \cite[Theorem 7]{Le4}, which was proved  with the help of
Andrews' identity.  The case $Q(n)=n^2$ of Theorem \ref{thm1} 
appeared in \cite{HL} for the construction of the unified WRT invariant.
\end{rmk}

\begin{proof}  By the definition of $I_k$, it is enough to consider the case when $f = z^m (q^a z;q)_k$.
Suppose
$$Q(n)=a_2 n^2 + a_1 n + a_0\ .$$
Let $\b(n) = a_2 n^2$. Rewriting $f$ as a sum
 with help of \eqref{eq.newton}, one can show
\begin{equation*}%\label{eq.3}
\l Q (q^{-a_1m}z^m(q^{a-a_1}z; q)_k) = q^{a_0} \l\b (z^m(q^az; q)_k).
\end{equation*}
Hence, the substitution of $q^{-a_1}z$ for $z$ 
transforms $\l Q$ into $\l\b$. Without loss of generality,
 we can further  assume $Q=\b$.

The rest of the proof is by induction on $k$.

The case $k=1$ is trivial.
Suppose that the statement holds true
for $k-1$.

Since
$$
z^m(q^{a+1} z; q)_k - z^m(q^az; q)_k = q^a(1-q^k)z^{m+1}(q^{a+1}z; q)_{k-1}\ ,
$$
we see that, by the induction hypothesis, $\l\b(z^m(q^{a+1} z; q)_k)$ is divisible by $X_k$ if and only if
$\l\b(z^m(q^az; q)_k)$ is. Therefore we only need to show the statement for a single value of $a$.
We will take $a=-\lfloor k/2\rfloor$.
The cases of odd and even $k$ will be considered separately.

Suppose $k=2l+1$. Then $a=-l$ and
$\l\b(z^m(q^{-l} z; q)_{2l+1})$
is divisible by $X_k = (q^{l+1}; q)_{l+1}$ by Lemma \ref{lemma1} (b) below.

Now suppose $k=2l$. Then $a=-l$ and we need to show that for every integer $m$, $X_k$
divides $\l\b(B(m, l))$ where
$$
B(m,l) := z^m(q^{-l} z; q)_{2l} \ .
$$
Since
$$
B(m,l) - q^l B(m+1, l) = z^m (q^{-l}z; q)_{2l+1}
$$
and $X_k = (q^{l+1}; q)_l$ divides $\l\b(z^m (q^{-l}z; q)_{2l+1})$
by Lemma \ref{lemma1} (b) below, it is enough to show that $X_k$ divides $\l\b(B(m, l))$ for only a single value of $m$. We choose $m=-l$,
and we will  show that $X_k=(q^{l+1};q)_l$ divides $\l\b(B(-l, l))$.

Using that $\sigma$ is the algebra automorphism of $\Zqz$ sending $z$ to $z^{-1}$,
we get
\bas
(\id + \sigma) B(-l, l) & = z^{-l} (q^{-l} z; q)_{2l} + z^l (q^{-l} z^{-1}; q)_{2l} \\
& = z^{-l} (q^{-l} z; q)_{2l} + z^{-l} q^{-l} (q^{1-l} z; q)_{2l} \\
& = z^{-l} (q^{1-l} z; q)_{2l-1} (1-q^{-l} z + q^{-l} (1 - q^l z)) \\
& = z^{-l} (q^{1-l} z; q)_{2l-1}(1-z)(1+q^{-l})\\
& = -q^{-l} (1+q^l)\, y_{l-1} \ ,
\end{align*}
where
$$
y_l := z^{-l}(1-z^{-1})(q^{-l}z; q)_{2l+1} = (-1)^l q^{-\frac{l(l+1)}{2}} \prod^l_{j=0} (z-q^j)(z^{-1}-q^j)\ .
$$
From $\l\b\sigma = \l\b$ it follows that
$$
2\l\b(B(-l, l)) = \l\b((\id+\sigma)B(-l, l)) =- q^{-l} (1+q^{l}) \l\b(y_{l-1}) \ ,
$$
which is divisible by $2(1+q^l)(q^l; q)_l = 2(q^{l+1}; q)_l$ thanks to Lemma \ref{lemma1} (a).
This completes the induction, whence the proof.
\end{proof}

\begin{lemma}\label{lemma1}
With the same notations as above we have
\begin{enumerate}
	\item[(a)] if $f\in \Zqz$ is invariant under $\sigma$,
	then $2(q^{l+1}; q)_{l+1}$ divides $\l\b(f y_l)$;
	\item[(b)] for any $f\in \Zqz$, $\l\b\big ( (q^{-l}z; q)_{2l+1} \, f\big)$ is divisible by $(q^{l+1}; q)_{l+1}$.
\end{enumerate}
\end{lemma}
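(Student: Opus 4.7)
The plan is to prove (b) first and then deduce (a) from (b) using $\sigma$-invariance.

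\emph{Reduction (b)$\Rightarrow$(a).} Writing $fy_l = z^{-l}(q^{-l}z;q)_{2l+1}f - z^{-l-1}(q^{-l}z;q)_{2l+1}f$, two applications of (b) give $(q^{l+1};q)_{l+1} \mid \l\b(fy_l)$. To produce the extra factor of $2$ I would exploit $\sigma$-symmetry: since $\sigma f = f$ and $\sigma y_l = y_l$ (already verified in the main proof), writing $fy_l = \sum_k e_k z^k$ yields $e_k = e_{-k}$, so $\l\b(fy_l) = e_0 + 2\sum_{k\ge 1}e_k q^{a_2 k^2}$ (using $\l\b\sigma = \l\b$, which holds because $\b(n) = a_2n^2$ has no linear term). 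A direct computation using \eqref{eq.newton} shows the constant coefficient of $y_l$ is $2(-1)^l q^{-l(l+1)/2}\binomq{2l+1}{l+1}$, hence even, and the convolution $fy_l$ inherits this parity in its constant term. Thus $2\mid \l\b(fy_l)$, and since $2$ and $(q^{l+1};q)_{l+1}$ are coprime in $\Zq$ (the latter has constant term $1$ as a polynomial in $q$), the two divisibilities multiply.

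\emph{Proof of (b).} By $\Zq$-linearity it suffices to take $f = z^m$ for arbitrary $m \in \BZ$. Using \eqref{eq.newton} together with the identity $\binom{j}{2}-lj = -j(2l+1-j)/2$, one has
\[
\l\b((q^{-l}z;q)_{2l+1}z^m) = \sum_{j=0}^{2l+1}(-1)^j\binomq{2l+1}{j}q^{-j(2l+1-j)/2 + a_2(j+m)^2}.
\]
I would proceed by induction on $l$. The base case $l=0$ reduces to $(1-q)\mid q^{a_2 m^2}(1 - q^{a_2(2m+1)})$, which is immediate. For the induction step, the factorization $(q^{-l}z;q)_{2l+1} = [1-(q^l+q^{-l})z + z^2]\cdot (q^{-(l-1)}z;q)_{2l-1}$ and the $\Zq$-linearity of $\l\b$ reduce the problem to showing that a specific combination of three quantities, each divisible by $(q^l;q)_l$ by the induction hypothesis, is actually divisible by $(q^{l+1};q)_{l+1} = (q^l;q)_l(1+q^l)(1-q^{2l+1})$.

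\emph{Main obstacle.} Extracting the extra factor $(1+q^l)(1-q^{2l+1})$ from this combination is the heart of the lemma. My approach would be to exploit the palindromic symmetry $j \leftrightarrow 2l+1-j$ in the display above: the $q$-binomial coefficient $\binomq{2l+1}{j}$ and the exponent $-j(2l+1-j)/2$ are invariant under this involution while $(-1)^j$ changes sign. Pairing the $2l+2$ terms into $l+1$ pairs collapses each pair to a term with an explicit factor $q^{a_2(2m+2l+1)(2j-2l-1)}-1$, which already carries $(1-q^{2l+1-2j})$ for $j = 0,\dots,l$. Combined with divisibility extracted from the $q$-binomial coefficients themselves, this should assemble the missing factor. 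I expect the combinatorial bookkeeping to assemble $(q^{l+1};q)_{l+1}$ rather than the coarser product of odd factors $\prod_{j=0}^l(1-q^{2l+1-2j})$ to be the delicate step, likely requiring either an iterated bisection or an appeal to an Andrews-type $q$-hypergeometric identity in the spirit of \cite[Theorem 7]{Le4} (which is the $Q(n)=n^2$ case of Theorem \ref{thm1}).
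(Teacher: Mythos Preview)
Your reduction (b)$\Rightarrow$(a) is fine: since both $f$ and $y_l$ are $\sigma$-invariant, writing $f=\sum c_k z^k$ and $y_l=\sum d_k z^k$ gives $e_0=c_0d_0+2\sum_{k>0}c_kd_k$, and $d_0\in 2\Zq$ as you computed, so $e_0$ is even; the coprimality of $2$ and $(q^{l+1};q)_{l+1}$ in $\Zq$ holds because the latter is nonzero in $\mathbb{F}_2[q^{\pm1}]$.

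The genuine gap is in (b). Your pairing $j\leftrightarrow 2l+1-j$ produces, for each of the $l+1$ pairs, a single factor $1-q^{a_2(2m+2l+1)(2l+1-2j)}$ sitting in a separate summand. These factors do not multiply together; at best you get divisibility of the whole sum by their gcd, not by their product, and in any case the exponents $a_2(2m+2l+1)(2l+1-2j)$ bear no direct relation to the exponents $l+1,\dots,2l+1$ appearing in $(q^{l+1};q)_{l+1}$. The induction you sketch has the same defect: the missing factor $(1+q^l)(1-q^{2l+1})$ cannot be extracted from three quantities each divisible by $(q^l;q)_l$ without some nontrivial identity relating them. You correctly diagnose that an Andrews-type input is needed here, but once you grant yourself that, you are assuming the content of the lemma.

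The paper runs the argument in the opposite order. It proves (a) first: the base case $f=1$ is precisely where \cite[Theorem~7]{Le4} (the Andrews-based divisibility) is invoked, yielding $\l\b(y_l)\in 2(q^{l+1};q)_{l+1}\Zq$ directly; general $\sigma$-invariant $f$ then reduces to $f=1$ via the recursion $(z+z^{-1})y_l=y_{l+1}+(q^{l+1}+q^{-l-1})y_l$, which expresses $(z+z^{-1})^m y_l$ as a $\Zq$-combination of $y_l,\dots,y_{l+m}$. Part (b) is then a one-line consequence of (a), using
\[
(\id+\sigma)\bigl(z^m(q^{-l}z;q)_{2l+1}\bigr)=y_l\cdot\!\!\sum_{j=-m-l}^{m+l}\! z^j,
\]
which is exactly your pairing repackaged so that (a) does the work. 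The upshot is that the Andrews input is unavoidable, and it is cleaner to spend it once in (a) than to try to rebuild it inside an induction for (b).
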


\begin{proof}
(a) First we prove the case $f=1$. We will show that this case 
follows from \cite[Theorem 7]{Le4}, which was proved by using the Andrews identity.
In fact we have
\begin{align}
y_l  & = z^{-l}(1-z^{-1})(q^{-l}z; q)_{2l+1} \notag \\
&= z^{-l} (q^{-l}z; q)_{2l+1} - z^{-l-1} (q^{-l}z; q)_{2l+1}\ . \label{e0021}
\end{align}
It is easy to see that the two terms of the right hand side of \eqref{e0021} are related by
\be\label{stand}
\sigma \big( z^{-l} (q^{-l}z; q)_{2l+1}  \big)  =    - z^{-l-1} (q^{-l}z; q)_{2l+1}\ .\ee
 Hence
\begin{align*}
\l\b(y_l)
& = 2 \l\b(z^{-l}(q^{-l}z; q)_{2l+1})  \nonumber \\
  & = 2
\sum_{j=0}^{2l+1} (-1)^j \qbinom {2l+1}j q^{a_2(j-l)^2}\ ,
\end{align*}
which, according to \cite[Theorem 7]{Le4}, is divisible by 
$$2\frac{\cb{2l+1}!}{\cb l!}= 2(-1)^{l+1} q^{-(l+1)(3l+2)/4}(q^{l+1}; q)_{l+1}$$
in $\BZ[q^{\pm1/2}]$.
Since $\l\b(y_l)$ and $2(q^{l+1}; q)_{l+1}$ are both in $\Zq$, (a) is true when $f=1$.

Consider the general case. Since $\sigma(f)=f$,
$f$ is a polynomial in $(z+z^{-1})$. It is enough to prove
(a) for $f=(z+z^{-1})^m$.
Since
$$
(z+z^{-1})y_l = y_{l+1} + (q^{l+1}+q^{-l-1})y_l\ ,
$$
$(z+z^{-1})^m y_l$ is a $\Zq$-linear combination of $y_l, y_{l+1},\ldots, y_{l+m}$.
Because $(q^{l+i};q)_{l+i}$ is divisible by $(q^{l+1};q)_{l+1}$ for every positive integer
$i$, the case
$f= (z+z^{-1})^m$ follows from the case $f=1$.

(b) For non-negative integer $m$, using
$$\sigma \big( z^{m} (q^{-l}z; q)_{2l+1}  \big)  =    - z^{-m-2l-1} (q^{-l}z; q)_{2l+1}\ ,$$
we get
$$
2\l\b(z^m(q^{-l}z; q)_{2l+1}) = \l\b\left((\id+\sigma)z^m(q^{-l}z; q)_{2l+1}\right)
= \l\b(y_l \sum_{j=-m-l}^{m+l} z^j)\ ,
$$
which is divisible by $2(q^{l+1}; q)_{l+1}$ according to (a).
Similar argument works for negative $m$.
\end{proof}

The following corollary is sometimes more convenient than Theorem \ref{thm1}.

\begin{cor}\label{cor}
For every positive integer $k$ and every integral quadratic form $Q$, $X_k$ divides
$$
\sum_{j=0}^k (-1)^j \binomq kj q^{Q(j)+\binom j2}\ .
$$
%Furthermore, if $Q$ is degenerate then the above is divisible by $(q;q)_k$.
\end{cor}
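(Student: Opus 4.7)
The plan is to recognize this corollary as the single-generator case of Theorem \ref{thm1} applied to $f = (z;q)_k$. First I would note that $(z;q)_k$ is a generator of the ideal $I_k$ (namely the $a=0$ generator from the definition in Section \ref{secI}), and then use the $q$-binomial identity \eqref{eq.newton} with $a=0$ to write
\[
(z;q)_k \;=\; \sum_{j=0}^k (-1)^j \binomq{k}{j} q^{\binom{j}{2}} z^j.
\]

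Next I would apply the $\Z[q^{\pm 1}]$-linear map $\l Q$ term by term. Since $\l Q(z^j) = q^{Q(j)}$ by definition, this gives
\[
\l Q\bigl((z;q)_k\bigr) \;=\; \sum_{j=0}^k (-1)^j \binomq{k}{j} q^{Q(j) + \binom{j}{2}}.
\]
Theorem \ref{thm1} asserts that $\l Q(f) \in X_k \, \Z[q^{\pm 1}]$ for every $f \in I_k$, so taking $f = (z;q)_k \in I_k$ yields precisely the claimed divisibility. I do not expect any real obstacle here: the difficult part (the inductive argument relying on Lemma \ref{lemma1}, which in turn invokes Andrews' identity through \cite[Theorem~7]{Le4}) has already been absorbed into Theorem \ref{thm1}, and the corollary is simply a convenient repackaging obtained by specializing $f$ and reading off the coefficients.
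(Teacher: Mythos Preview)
Your proposal is correct and matches the paper's own proof essentially verbatim: apply the $q$-binomial expansion \eqref{eq.newton} with $a=0$ to compute $\l Q((z;q)_k)$, then invoke Theorem \ref{thm1} with $f=(z;q)_k\in I_k$.
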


\begin{proof}
From (\ref{eq.newton}) we have
$$ \l Q \big( (z; q)_k\big)  = \sum_{j=0}^k (-1)^j \binomq kj q^{Q(j)+\binom j2}\ .$$
By Theorem \ref{thm1}, the left hand side is divisible by $X_k$, and so is the right hand side.
\end{proof}

\subsection{Polynomials with $q$-integer values}
We also need a generalization of the following classical result
in the theory of polynomials with integer values:
If $f(z_1,\dots,z_n)\in \BQ[z_1,\dots, z_n]$
takes integer values whenever $z_1,\dots,z_n$ are integers,
then $f$ is a $\BZ$-linear combination of
$\prod_{i=1}^n \binom{z_i}{k_i}$,
$k_i \in \Z_{\ge0}$. %Here $\N$ is the set of non-negative integers.

Let us formulate a $q$-analog of this fact.

\begin{prop}\label{multi}
If $f(z_1,\dots, z_n) \in \BQ(q)[z_1,\dots, z_n]$ satisfies
$f(q^{m_1}, \dots, q^{m_n})\in \BZ[q^{\pm 1}]$
 for every $m_1,\dots,m_n \in \BZ$, then $f$ is a $\BZ[q^{\pm 1}]$-linear combination of
$$\prod_{i=1}^n \frac{(z_i;q)_{k_i} }{(q;q)_{k_i}} \quad {\text with}\quad k_i \in \Z_{\ge0} \, .$$

\end{prop}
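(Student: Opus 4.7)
The plan is to induct on the number of variables $n$, with the single-variable case proved by a change-of-basis argument that becomes lower triangular after specialization at negative powers of $q$.

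For the base case $n=1$, the family $\{(z;q)_k/(q;q)_k\}_{k\ge 0}$ is a $\BQ(q)$-basis of $\BQ(q)[z]$, since $(z;q)_k$ has exact degree $k$ in $z$. I write $f(z)=\sum_{k=0}^N c_k\,(z;q)_k/(q;q)_k$ uniquely with $c_k\in\BQ(q)$; the goal is to show $c_k\in\BZ[q^{\pm 1}]$. Specializing at $z=q^{-m}$ and using $(1-q^{-i})=-q^{-i}(1-q^{i})$, a direct calculation gives
\[
\frac{(q^{-m};q)_k}{(q;q)_k}=\begin{cases} 0, & k>m,\\ (-1)^m\,q^{-m(m+1)/2}, & k=m,\\ (-1)^k\,q^{k(k-1)/2-km}\,\binomq{m}{k}, & k<m,\end{cases}
\]
all of which lie in $\BZ[q^{\pm 1}]$, with a unit on the diagonal $k=m$. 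The values $\{f(q^{-m})\}_{m\ge 0}$ therefore determine $\{c_m\}$ through a lower-triangular $\BZ[q^{\pm 1}]$-linear system with invertible diagonal; since $f(q^{-m})\in\BZ[q^{\pm 1}]$ by hypothesis, solving recursively yields $c_m\in\BZ[q^{\pm 1}]$ for every $m$.

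For the inductive step, I expand $f$ in the last variable as
\[
f(z_1,\dots,z_n)\;=\;\sum_{k\ge 0}g_k(z_1,\dots,z_{n-1})\,\frac{(z_n;q)_k}{(q;q)_k},
\]
with unique $g_k\in\BQ(q)[z_1,\dots,z_{n-1}]$, all but finitely many zero. For each fixed $(m_1,\dots,m_{n-1})\in\BZ^{n-1}$ the univariate polynomial $f(q^{m_1},\dots,q^{m_{n-1}},z_n)$ satisfies the hypothesis of the base case, so $g_k(q^{m_1},\dots,q^{m_{n-1}})\in\BZ[q^{\pm 1}]$ for all such tuples. The inductive hypothesis in $n-1$ variables then expresses each $g_k$ as a $\BZ[q^{\pm 1}]$-linear combination of $\prod_{i=1}^{n-1}(z_i;q)_{k_i}/(q;q)_{k_i}$; multiplying by $(z_n;q)_k/(q;q)_k$ and summing over $k$ gives the asserted expansion of $f$.

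The only nonformal ingredient is the closed form for $(q^{-m};q)_k/(q;q)_k$ displayed above, which is a routine $q$-Pochhammer identity. Once that identity is in hand, both inductions go through with no real obstacle; choosing to specialize at $z=q^{-m}$ rather than at $z=q^{m}$ is essential because it is precisely for negative powers that the evaluation matrix is triangular with unit diagonal.
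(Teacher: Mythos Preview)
Your proof is correct and rests on the same key idea as the paper's: specialize at $z_i=q^{-k_i}$ so that the change-of-basis matrix becomes lower triangular with units $\pm q^{-k_i(k_i+1)/2}$ on the diagonal. The only difference is organizational---you induct on the number of variables while the paper inducts directly on $|\bk|=k_1+\cdots+k_n$ in the multivariable setting---but the underlying computation is identical.
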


\begin{proof}
  The elements $z_{\bk}:= \prod_{i=1}^n (z_i;q)_{k_i} /(q;q)_{k_i}$, with $\bk=(k_1,\dots, k_n)  \in \Z_{\ge0}^n$,
 form a $\BQ(q)$-basis of $\BQ(q)[x_1,\dots, x_n]$. Hence there are $c_\bk \in \BQ(q)$ such that
$f = \sum_{\bk\in \Z_{\ge0}^n} c_\bk \, z_\bk$.
Only a finite number of $c_\bk$'s
are non-zero. We will show that $c_\bk \in \BZ[q^{\pm1}]$ by induction on $|\bk|:= k_1 +\dots+ k_n$.

Suppose $\bk=0$. Let $z_1=z_2=\cdots
= z_n=1$, then $z_\bk=0$ unless $\bk=0$. Hence $c_0= f(1,1,\dots, 1) \in \BZ[q^{\pm 1}]$.

Suppose $c_\bk\in \BZ[q^{\pm 1}]$ for $|\bk| < l$. The $z_\bk$'s with $|\bk|< l$ will be called terms of lower orders.
Consider a $\bk =(k_1,\dots,k_n)$ with $|\bk|=l$. Note that  when
$z_i = q^{-k_i}$,
$z_{(a_1,\dots,a_n)} =0$ if for some $i$ one has $a_i > k_i$, and $z_{\bk}
= \pm 1$.
Hence

$$ f( q^{-k_1}, \dots , q^{-k_n}) = \pm c_\bk + \text{terms of lower orders}.$$
By induction, the terms of lower orders are in $\BZ[q^{\pm 1}]$. Since the left hand side is in $\BZ[q^{\pm 1}]$,
we conclude that $c_\bk \in \BZ[q^{\pm 1}]$.
\end{proof}

\begin{cor}\label{append} For any integer $a$, the element
$(q^a z_1 z_2;q)_k$
is a $\Z[q^{\pm 1}]$--linear combination of terms
$$\frac{(q;q)_k}{(q;q)_{k_1}(q;q)_{k_2}} (z_1;q)_{k_1}\,
(z_2;q)_{k_2}\; $$
with $k_1,k_2\leq k$.
\end{cor}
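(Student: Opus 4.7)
The plan is to reduce the statement to a direct application of Proposition \ref{multi} after first dividing through by $(q;q)_k$. Concretely, I would consider
\[
g(z_1,z_2) \;:=\; \frac{(q^a z_1 z_2;q)_k}{(q;q)_k} \;\in\; \BQ(q)[z_1,z_2],
\]
and aim to show that $g$ satisfies the hypothesis of Proposition \ref{multi}, namely that $g(q^{m_1},q^{m_2})\in \BZ[q^{\pm1}]$ for all $m_1,m_2\in\BZ$. Setting $b=a+m_1+m_2$, this reduces to verifying that $(q;q)_k$ divides $(q^b;q)_k$ in $\BZ[q^{\pm1}]$ for every $b\in\BZ$.

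Next I would dispose of that divisibility by a short case analysis on $b$. For $b\geq 1$, one has $(q^b;q)_k=(q;q)_{b+k-1}/(q;q)_{b-1}$, which equals $(q;q)_k\,\qbinom{b+k-1}{k}$ up to a power of $q$, hence is divisible by $(q;q)_k$. For $-k+1\leq b\leq 0$ the factor $(1-q^0)$ appears, so $(q^b;q)_k=0$. For $b\leq -k$, the reflection $(1-q^{b+i})=-q^{b+i}(1-q^{-b-i})$ applied to each factor gives $(q^b;q)_k=(-1)^k q^{kb+\binom{k}{2}}(q^{-b-k+1};q)_k$, which falls under the first case. So $g$ satisfies the hypothesis.

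Then I would invoke Proposition \ref{multi} to expand
\[
g(z_1,z_2) \;=\; \sum_{k_1,k_2\ge 0} c_{k_1,k_2}\, \frac{(z_1;q)_{k_1}(z_2;q)_{k_2}}{(q;q)_{k_1}(q;q)_{k_2}}
\]
with $c_{k_1,k_2}\in \BZ[q^{\pm1}]$. Multiplying both sides by $(q;q)_k$ recovers the desired presentation of $(q^a z_1 z_2;q)_k$. To finish, I would invoke a degree count: $(q^a z_1 z_2;q)_k$ is a polynomial of degree $k$ in $z_1 z_2$, hence of degree $\leq k$ separately in $z_1$ and in $z_2$, whereas $(z_i;q)_{k_i}$ has degree $k_i$ in $z_i$ and the family $\{(z_i;q)_{k_i}\}_{k_i\ge 0}$ is $\BQ(q)$-linearly independent. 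Therefore $c_{k_1,k_2}=0$ unless $k_1,k_2\leq k$.

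The only genuinely non-routine step is the elementary divisibility $(q;q)_k\mid (q^b;q)_k$ above, and that is what makes the substitution trick legitimate; once this is in hand, Proposition \ref{multi} does all the structural work and the $(q;q)_k$ in the target coefficient is absorbed precisely by the division that guarantees integrality of $g$ on the grid $(q^{m_1},q^{m_2})$.
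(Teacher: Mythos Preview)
Your proposal is correct and follows essentially the same approach as the paper: divide by $(q;q)_k$, check integrality on the grid $(q^{m_1},q^{m_2})$ via the standard fact that $q$-binomials lie in $\BZ[q^{\pm1}]$, apply Proposition~\ref{multi}, and bound $k_1,k_2\le k$ by degree. The paper's proof is just a terser version of yours, asserting the integrality of the evaluation without spelling out the case analysis on $b$.
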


\begin{proof}
The evaluation of
$$\frac{(q^a z_1 z_2;q)_k}{(q;q)_k}
\in \Q(q)[z_1, z_2]$$
 at $z_i=q^{m_i}$ belongs to $\Z[q^{\pm 1}]$
for any $m_i$. Applying Proposition \ref{multi} we get the result.
Note that $k_1$ and $k_2$ should be less than or equal to $k$ by degree reason.
\end{proof}

%Let $I_n$ be the ideal in $\BZ[z^{\pm 1}, q^{\pm 1}]$ generated by $(z; q)_ k\, (q^{k+1};q)_{n-k}, k = 0, 1, . . . , n.$ Later we will use the following
%two equivalent definitions of this ideal.
%
%\begin{prop}\label{ideal}
%(a) $I_n$ is the ideal of $\BZ[z^{\pm 1}, q^{\pm 1}]$  generated by $ (z q^a;q)_n$ with $a \in\BZ$.
%
%(b) $I_n$ is the set of all $f \in \BZ[z^{\pm 1}, q^{\pm 1}]$ such that $f(q^b,q)$ is divisible by $(q;q)_n$ for every $b \in \BZ$.
%\end{prop}
%
%Part (a)  follows from Proposition 5.1 in \cite{hab1}. Part (b) is Proposition 4.3 of \cite{Le4}.

\section{Basic results: the case of roots of unity}\label{root} 
In this section we prove
a basic divisibility result for the case when $q$ is a root of unity $\xi$ of order $r$.
In Subsection \ref{simp} we reduce the integrality of $H^G(k,b,\ve)/H^G(0,\pm1,0)$ to that of a simpler quotient.
% $\tau^G_{M,L}(\xi)\in \BQ(\xi^{1/4})$, to prove that $\tau^G_{M,L}(\xi)$ is algebraic integral, one can  is a expressed as a ration

% In this section we will assume that $\xi =\exp(2 \pi i/r)$, the first root of unity of order $r$.

For $x, y \in \BQ(\xi^{1/4}, e_8)$ we write
$ x \sim y $ if $x/y$ is an invertible element in $\BZ[\xi^{1/4}, e_8]$.
% Here we collect some results in the case when the quantum parameter is a root of unity $\xi$ of order $r$.

%The following two lemmas turn out to be useful in the sequel.
We use the notation $\bar k=r-1-k$,  and
$$
 \binomx mn := \ex \binomq mn, \quad   O_\xi := (\xi;\xi)_{\lfloor \frac{r-1}{2}\rfloor}, \quad x_k := \prod_{j=\lfloor k/2 \rfloor +1 }^k (1- \xi^j)=\ev_\xi(X_k),
$$
where $X_k$ is defined in \eqref{xn}.

\subsection{Divisibility}
 The main divisibility result at roots of unity is formulated below.
\begin{prop} \label{div1} For every integral quadratic form $Q$  and $f(z,q) \in I_k$ with $0\le k < r$ we have
$$\sum_{n=0}^{r-1} \xi ^{Q(n)}\, f(\xi^n,\xi) \in {x_k O_\xi}\cdot \BZ[\xi]\ .$$
%Furthermore, if $Q$ is degenerate then
%$$\sum_{n=0}^{r-1} \xi ^{Q(n)}\, f(\xi^n,\xi)\in r\cdot\BZ[\xi]\ .$$ 
\end{prop}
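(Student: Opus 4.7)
The plan is to rewrite $T_Q(f):=\sum_{n=0}^{r-1}\xi^{Q(n)}f(\xi^n,\xi)$ as a quadratic Gauss sum $\gamma$ (which contributes the factor $O_\xi$) times an evaluation of the form $\ex(\l{\tilde Q}(g))$ with $g\in I_k$ and $\tilde Q$ an integral quadratic form (which contributes $x_k$ via Theorem \ref{thm1}).

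By $\BZ[q^{\pm 1}]$-linearity of $T_Q$ and Proposition \ref{ideal}, it suffices to treat $f(z,q)=z^m(q^a z;q)_k$ with $a,m\in\BZ$. Expanding $(q^a z;q)_k$ via \eqref{eq.newton} and interchanging the two summations gives
\begin{equation*}
T_Q(f)=\sum_{j=0}^k(-1)^j\binomx{k}{j}\xi^{\binom{j}{2}+aj}\,G(m+j),\quad\text{where}\quad G(l):=\sum_{n=0}^{r-1}\xi^{Q(n)+nl}.
\end{equation*}
Writing $Q(n)=a_2n^2+a_1n+a_0$, in the generic case $\gcd(2a_2,r)=1$ one picks $t\in\BZ$ with $2a_2\,t\equiv 1\pmod r$ and completes the square to obtain $G(l)=\gamma\,\xi^{a_0-a_2t^2(a_1+l)^2}$, where $\gamma:=\sum_{n=0}^{r-1}\xi^{a_2 n^2}$. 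Substituting back, the exponent in $j$ takes the form $\binom{j}{2}+\tilde Q(j)$ for an integral quadratic form $\tilde Q$, and a second application of \eqref{eq.newton} identifies
\begin{equation*}
T_Q(f)=\gamma\cdot\ex\!\bigl(\l{\tilde Q}((z;q)_k)\bigr).
\end{equation*}
Since $(z;q)_k\in I_k$, Theorem \ref{thm1} gives $\l{\tilde Q}((z;q)_k)\in X_k\BZ[q^{\pm 1}]$, whose $\xi$-evaluation lies in $x_k\BZ[\xi]$. The classical identity $\gamma\bar\gamma=r=(\xi;\xi)_{r-1}$ combined with $(\xi;\xi)_{r-1}\sim O_\xi^2$ in $\BZ[\xi]$ (valid for $r$ odd) then yields $\gamma\,\BZ[\xi]=O_\xi\,\BZ[\xi]$, completing the generic case.

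The principal obstacle is the non-generic situation $\gcd(2a_2,r)>1$, which notably covers every $r$ even. The strategy here is a reduction: letting $d:=\gcd(a_2,r)$, split the outer sum $\sum_{n=0}^{r-1}$ according to the residue of $n$ modulo $r/d$; then $G(l)$ either vanishes (for $l$ in the wrong residue class) or reduces, up to an explicit unit, to a Gauss sum at modulus $r/d$, to which the generic analysis applies after iteration. The extreme case $a_2\equiv 0\pmod r$ is handled separately: there $G(l)=r\cdot[l\equiv-a_1\pmod r]$ is a Dirichlet delta and the claim follows from the trivial divisibility $f(\xi^n,\xi)\in(\xi;\xi)_k\BZ[\xi]$ supplied by Proposition \ref{ideal}, combined with the factorization of $r$ in $\BZ[\xi]$. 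The delicate point throughout is the bookkeeping of unit factors relating $\gamma$, the intermediate Gauss sums at modulus $r/d$, and $O_\xi$, especially when $r$ is even: there the comparison may require passing through $\BZ[\xi^{1/4},e_8]$ (cf.\ the remark after \eqref{inv}) before returning to establish the stated membership in $x_k O_\xi\BZ[\xi]$.
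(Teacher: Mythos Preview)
Your generic-case argument (when $\gcd(2a_2,r)=1$) is correct: completing the square factors out a Gauss sum $\gamma$ with $\gamma^2\sim r\sim O_\xi^2$, and the remaining finite sum is $\ex(\l{\tilde Q}((z;q)_k))\in x_k\BZ[\xi]$ by Theorem~\ref{thm1}.

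However, the non-generic case is a genuine gap, not merely ``delicate bookkeeping''. Your proposed reduction is to take $d=\gcd(a_2,r)$ and iterate. But consider $r\equiv 2\pmod 4$ with $a_2$ odd: then $d=\gcd(a_2,r)$ may well equal $1$, so your reduction does nothing, yet $\gcd(2a_2,r)=2$ and you still cannot complete the square. Worse, in this situation $\gamma=G(a_2,0,\xi)=0$ by Proposition~\ref{lGauss}(b), so the factorization $T_Q(f)=\gamma\cdot(\cdots)$ carries no information. The Gauss sums $G(a_2,a_1+m+j,\xi)$ now depend essentially on the parity of $a_1+m+j$ (cf.\ the last line of Proposition~\ref{lGauss}(b)), and there is no single $\gamma$ to factor out. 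Your sketch does not address this, and since the proposition is used crucially for even $r$ in Section~\ref{secsu2}, this case cannot be bypassed.

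The paper's proof avoids Gauss sums entirely and works uniformly in $r$ and $Q$. The key device is the root-of-unity binomial identity
\[
\binomx{n+k}{n}=(-1)^n\xi^{nk+n+\binom{n}{2}}\binomx{\bar k}{n},\qquad \bar k:=r-1-k,
\]
which converts the sum $\sum_{n=0}^{r-1}\xi^{Q(n)}\binomx{n+a}{k}$ into a sum of the shape $\sum_{n'=0}^{\bar k}(-1)^{n'}\xi^{Q'(n')+\binom{n'}{2}}\binomx{\bar k}{n'}$ (a \emph{finite} sum with $\bar k+1$ terms), to which Corollary~\ref{cor} applies directly, yielding divisibility by $x_{\bar k}$. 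Writing $(\xi^{n+a};\xi)_k=(\xi;\xi)_k\binomx{n+a-1}{k}$ and $(\xi;\xi)_k=x_k\,(\xi;\xi)_{\lfloor k/2\rfloor}$, the factor $O_\xi$ then emerges from the elementary divisibility $O_\xi\mid(\xi;\xi)_{\lfloor k/2\rfloor}\,x_{\bar k}$ (Lemma~\ref{ab}(f)). This route never needs to invert $2a_2$ modulo $r$, and that is precisely why it succeeds uniformly.
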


We need the following lemma  for the proof of Proposition \ref{div1}.

\begin{lemma}\label{prop1}
For any integers $a$, $k$, with $0\le k < r$, and integral quadratic form $Q$, the element
$$y=\sum_{n=0}^{r-1} \xi^{Q(n)} \binomx{n+a}k$$
 is divisible by $x_{\bar k}$.
% Furthermore, if $Q$ is degenerate then $y$ is divisible by $(\xi;\xi)_{\bar k}$ .
\end{lemma}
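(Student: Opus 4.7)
My plan is to induct on $k$.

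For the base case $k=0$ we have $\bar k=r-1$ and $y=\sum_{n=0}^{r-1}\xi^{Q(n)}$. I would apply Corollary \ref{cor} with $k$ replaced by $r-1$ and with the integral quadratic form $Q(j)+j$ in place of $Q$. The key observation is the identity $(-1)^j\binomx{r-1}{j}=\xi^{-j(j+1)/2}$, which follows from $\binomq{r-1}{j}=(q^{r-j};q)_j/(q;q)_j$ together with $\xi^r=1$ and a brief product manipulation. The Corollary's sum $\sum_{j=0}^{r-1}(-1)^j\binomx{r-1}{j}\,\xi^{(Q(j)+j)+\binom j2}$ then collapses to $\sum_j\xi^{Q(j)}=y$, and since $X_{r-1}|_{q=\xi}=x_{r-1}=x_{\bar 0}$ the base case is settled.

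For the inductive step I would use the $q$-Pascal identity $(1-q^k)\binomq{n+a}{k}=(1-q^{n+a-k+1})\binomq{n+a}{k-1}$ at $q=\xi$. Multiplying by $\xi^{Q(n)}$ and summing over $n$ gives
\[
(1-\xi^k)\,y \;=\; \sum_{n=0}^{r-1}\xi^{Q(n)}\binomx{n+a}{k-1} \;-\; \xi^{a-k+1}\sum_{n=0}^{r-1}\xi^{Q(n)+n}\binomx{n+a}{k-1}.
\]
Each of the two sums on the right is an instance of the lemma with $k-1$ in place of $k$, using the integral quadratic forms $Q(n)$ and $Q(n)+n$, respectively. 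By the induction hypothesis, each is divisible by $x_{\overline{k-1}}=x_{\bar k+1}$, so $(1-\xi^k)\,y\in x_{\bar k+1}\,\Z[\xi]$.

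To extract $y\in x_{\bar k}\,\Z[\xi]$ I would compare $x_{\bar k+1}$ with $(1-\xi^k)\,x_{\bar k}$. When $\bar k$ is even, a direct check gives $x_{\bar k+1}=(1-\xi^{\bar k+1})\,x_{\bar k}$; since $\bar k+1=r-k$ the factor $(1-\xi^{\bar k+1})$ is an associate of $(1-\xi^k)$ in $\Z[\xi]$, and the conclusion is immediate. When $\bar k$ is odd the ratio becomes $x_{\bar k+1}/x_{\bar k}=(1-\xi^{\bar k+1})/(1-\xi^{(\bar k+1)/2})$, which carries an extra factor $(1-\xi^{(\bar k+1)/2})^{-1}$ that in general shares prime divisors with $x_{\bar k}$, so the single-step Pascal recursion alone does not close the argument. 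I expect this parity obstruction to be the main difficulty; I would resolve it either by running a parallel induction based on the two-step Pascal identity $(1-q^k)(1-q^{k-1})\binomq{n+a}{k}=(1-q^{n+a-k+1})(1-q^{n+a-k+2})\binomq{n+a}{k-2}$, which produces divisibility of $(1-\xi^k)(1-\xi^{k-1})\,y$ by $x_{\bar k+2}$ and can be combined with the one-step relation, or by a direct valuation analysis at each prime ideal of the Dedekind domain $\Z[\xi]$ lying over the rational primes dividing $r$.
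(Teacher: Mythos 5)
Your base case is correct, but the inductive step has a genuine gap which you acknowledge and do not close, so the proposal does not prove the lemma. From the one-step $q$-Pascal identity you only get $(1-\xi^k)\,y\in x_{\bar k+1}\,\Z[\xi]$. When $\bar k$ is odd one has $x_{\bar k+1}=x_{\bar k}\,(1+\xi^{(r-k)/2})$ while $1-\xi^{\bar k+1}=1-\xi^{r-k}\sim 1-\xi^{k}$, so after cancelling $1-\xi^k$ you are left with $y\in \frac{x_{\bar k}}{1-\xi^{(r-k)/2}}\,\Z[\xi]$, short by the factor $1-\xi^{(r-k)/2}$, which need not be a unit. Neither proposed repair is carried out, and the first one does not actually help: the two-step Pascal identity gives $(1-\xi^k)(1-\xi^{k-1})\,y\in x_{\bar k+2}\,\Z[\xi]$, and for odd $\bar k$ one computes $x_{\bar k+2}/x_{\bar k}=(1-\xi^{r-k})(1-\xi^{r-k+1})/(1-\xi^{(r-k)/2})\sim (1-\xi^{k})(1-\xi^{k-1})/(1-\xi^{(r-k)/2})$, i.e.\ exactly the same deficient bound. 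The suggested ``valuation analysis at each prime ideal'' is not an argument: the missing divisibility at the primes dividing $1-\xi^{(r-k)/2}$ is precisely the content of the lemma, and it is exactly what the Andrews-type cancellation packaged in Corollary \ref{cor} has to supply; a local analysis by itself produces nothing new.

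The paper avoids induction altogether, and your own base case points the way: the identity $(-1)^j\binomx{r-1}{j}=\xi^{-j(j+1)/2}$ is the $k=0$ instance of \eqref{e650}, namely $\binomx{n+k}{n}=(-1)^n\xi^{nk+n+\binom n2}\binomx{\bar k}{n}$, obtained from $1-\xi^m=-\xi^m(1-\xi^{r-m})$. Substituting $n'=n+a-k$ (legitimate because both $\xi^{Q(n)}$ and $\binomx{n+a}{k}$ are $r$-periodic in $n$) rewrites $y$ as $\sum_{n'=0}^{r-1}(-1)^{n'}\xi^{Q'(n')+\binom{n'}2}\binomx{\bar k}{n'}$ with $Q'$ again an integral quadratic form, and this is divisible by $x_{\bar k}$ by one application of Corollary \ref{cor} with $\bar k$ in place of $k$. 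If you generalize your base-case identity in this way, your argument becomes the paper's proof, with no induction and no parity obstruction.
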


\begin{proof} Using $1-\xi^m = -\xi^m (1-\xi^{r-m})$ we have
\begin{align}
\binomx {n+k}n = \frac{(\xi^{k+1}; \xi)_n}{(\xi; \xi)_n}
& = (-1)^n \xi^{kn+n(n+1)/2} \frac{(\xi^{r-k-n}; \xi)_n}{(\xi; \xi)_n} \notag \\
&= (-1)^n\xi^{nk+n+\binom n2} \binomx{\bar k}n. \label{e650}
\end{align}

Set $n' = n+a-k$. One has
\begin{align}
y  &= \sum_{n=0}^{r-1} \xi^{Q(n)} \binomx{n'+k}k  \notag \\
  & = \sum_{n'=0}^{r-1} (-1)^{n'} \xi^{Q'(n')+\binom {n'}2}
\binomx{\bar k}{n'} \label{e561}
\end{align}
by \eqref{e650}, where $Q'(n') = Q(n'-a+k)+n'k+n'$. In the right hand side of \eqref{e561}, the index $n'$ actually runs from 0 to $\bar k-1$, since $\binomx{\bar k}{n'}=0$ if $n' \ge \bar k$. The right hand side of \eqref{e561}  is divisible by $x_{\bar k}$  by Corollary \ref{cor}.
\end{proof}

\begin{proof}[Proof of Proposition \ref{div1}] Since the set $\{ z^d(z q^a;q)_k : d,a \in \BZ\}$ 
spans $I_k$ over $\BZq$, we can assume that $f=z^d(z q^a;q)_k$. Then
\bas \frac{\sum_{n=0}^{r-1} \xi ^{Q(n)}\, f(\xi^n,\xi)}{x_k O_\xi} &= \frac{(\xi;\xi)_{\lfloor k/2\rfloor}}{O_\xi} \sum_{n=0}^{r-1} \xi ^{Q(n)+ dn }\, \binomx{n+a-1}{k} \\
& \in \frac{(\xi;\xi)_{\lfloor k/2\rfloor}}{O_\xi}\,  x_{\bar k}\, \Zx\,  \quad \text{ (by Lemma \ref{prop1})} ,
\end{align*}
which is in  $\Zx$ by Lemma \ref{ab} (f) below.

%If $Q$ is degenerate then
%$$
%\sum_{n=0}^{r-1} \xi ^{Q(n)}\, f(\xi^n,\xi) = (\xi;\xi)_k \sum_{n=0}^{r-1} \xi %^{Q(n)+ dn }\, \binomx{n+a-1}{k}
%$$
%is divisible by $(\xi;\xi)_k (\xi; \xi)_{\bar k}$ , which is equal to $r$
% by Lemma \ref{ab} (b) below. Here we use the fact that
%$1-\xi^a\sim 1-\xi^{r-a}$ .
\end{proof}

\subsection{The ring $\BZ[\xi]$} It is known that $\Zx$ is a Dedekind domain with field of fractions $\Qx$.

\begin{lemma}\label{ab}
a) If $(a,r)=(b,r)$ then $(1-\xi^a)\sim (1-\xi^b)$ in $\BZ[\xi]$.

b) One has $ (\xi;\xi)_{r-1} = r$.

c) Suppose $y \in \BQ[\xi]$ and  $y^s \in \BZ[\xi]$ for some  positive integer $s$. Then  $y \in \BZ[\xi]$.

d) Suppose $y,z \in \BQ[\xi]$ with $z \neq 0$. If $a_s:=y^s z \in \BZ[\xi]$ for infinitely many positive $s$, then $ y\in \BZ[\xi]$.

e) One has
\bes O_\xi^2 \sim \begin{cases} r \quad &\text{if $r$ is odd}, \\
r/2 \quad &\text{if $r$ is even}.
\end{cases}
\ees

f) For every integer $0\le k<r$ , $O_\xi$ divides $(\xi;\xi)_{\lfloor k/2\rfloor}\, x_{\bar k}$ .
\end{lemma}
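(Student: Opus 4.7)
The plan is to prove the six parts in order, since (e) and (f) build on (a), (b), and (f) uses (e). For (a), let $d=(a,r)=(b,r)$, write $a=d\alpha$, $b=d\beta$ with $(\alpha,r/d)=(\beta,r/d)=1$, and pick $c$ coprime to $r/d$ with $\beta\equiv c\alpha\pmod{r/d}$. Then $(1-\xi^{ca})/(1-\xi^a)=1+\xi^a+\cdots+\xi^{a(c-1)}\in\BZ[\xi]$; replacing $c$ by its inverse modulo $r/d$ gives the reverse divisibility. Part (b) is obtained by evaluating $\prod_{j=1}^{r-1}(z-\xi^j)=1+z+\cdots+z^{r-1}$ at $z=1$. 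For (c), $\BZ[\xi]$ is the ring of integers of $\BQ(\xi)$, hence integrally closed, and $y$ is a root of the monic polynomial $T^s-y^s$ over $\BZ[\xi]$. For (d), if some $v_\fp(y)<0$ for a prime $\fp\subset\BZ[\xi]$, then $v_\fp(y^s z)=s\,v_\fp(y)+v_\fp(z)\to-\infty$ as $s\to\infty$, contradicting the hypothesis; the Dedekind property then yields $y\in\BZ[\xi]$. For (e), pair $(1-\xi^j)$ with $(1-\xi^{r-j})$ using (a): for $r$ odd this gives $O_\xi^{\,2}\sim(\xi;\xi)_{r-1}=r$, while for $r$ even the unpaired central factor $(1-\xi^{r/2})=2$ leaves $O_\xi^{\,2}\sim r/2$.

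Part (f) is the real obstacle. My approach is local at each prime $\fp$ of $\BZ[\xi]$ above a rational prime $p\mid r$. Writing $r=p^a r'$ with $(p,r')=1$, I will derive the explicit formula
\[ v_\fp(1-\xi^j)=\begin{cases} p^{v_p(j/r')} & \text{if } r'\mid j,\\ 0 & \text{otherwise,}\end{cases}\]
using that $\xi^j$ is a primitive $p^{a-v_p(j/r')}$-th root of unity, the total ramification of $p$ in $\BZ[\zeta_{p^m}]$ (where $1-\zeta_{p^m}$ generates the unique prime over $p$), and the fact that adjoining $\zeta_{r'}$ is unramified at $p$. The divisibility $O_\xi\mid(\xi;\xi)_{\lfloor k/2\rfloor}\,x_{\bar k}$ then reduces to the weighted inequality
\[ \sum_{j=1}^{\lfloor(r-1)/2\rfloor}p^{v_p(j/r')}\;\le\;\sum_{j\in T}p^{v_p(j/r')},\qquad T=\{1,\dots,\lfloor k/2\rfloor\}\cup\{\lfloor\bar k/2\rfloor+1,\dots,\bar k\},\]
to be verified at each $p\mid r$. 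Applying the involution $j\mapsto r-j$ to the upper block of $T$ (which preserves $\gcd(j,r)$ by (a), hence the summand) converts $T$ into $\{1,\dots,\lfloor k/2\rfloor\}\cup\{k+1,\dots,\lceil(r-1+k)/2\rceil\}$; the indices in $\{1,\dots,\lfloor(r-1)/2\rfloor\}$ that this new set misses, namely $\{\lfloor k/2\rfloor+1,\dots,k\}$, can then be matched bijectively, again under $j\mapsto r-j$, with the excess indices above $(r-1)/2$, provided the parity accounting for $r$ and $k$ is done correctly.

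The delicate point is clearly (f): tracking the floor and ceiling boundaries through four parity sub-cases, and respecting the weighted multiplicities $p^{v_p(j/r')}$ rather than the coarser $\gcd$-class counts, demands care precisely when $r$ is not squarefree (so that $p^{v_p(j/r')}$ can be genuinely larger than $1$ within a fixed $\gcd$-class). Once (f) is secured, the full lemma is available and can be plugged into the proof of Proposition \ref{div1}.
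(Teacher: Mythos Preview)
Your treatment of parts (a)--(e) is correct and essentially identical to the paper's. The interesting comparison is part (f), where your strategy (localize at each prime $\fp$ above $p\mid r$, compute $v_\fp(1-\xi^j)$ explicitly, and reduce to a combinatorial inequality on index sets) is genuinely different from the paper's global manipulation.

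There is, however, a concrete gap in your execution. After reflecting the upper block of $T$ by $j\mapsto r-j$ you obtain the set $\{1,\dots,\lfloor k/2\rfloor\}\cup\{k+1,\dots,\lceil(r-1+k)/2\rceil\}$. You then claim that the ``missing'' indices $\{\lfloor k/2\rfloor+1,\dots,k\}$ are carried by $j\mapsto r-j$ onto the ``excess'' indices above $\lfloor(r-1)/2\rfloor$. This is false: for example with $r=9$, $k=2$ the missing set is $\{2\}$, the excess set is $\{5\}$, and $9-2=7\neq 5$. More drastically, for $r=27$, $k=10$ the missing set $\{6,\dots,10\}$ maps under $j\mapsto 27-j$ to $\{17,\dots,21\}$, while the excess set is $\{14,\dots,17\}$. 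The valuation sums do agree in every case (they must, since the paper proves $(\xi;\xi)_{\lfloor k/2\rfloor}\,x_{\bar k}\sim O_\xi$ for odd $r$), but your proposed bijection does not witness this, and you have not supplied an alternative argument. So as written, (f) is unproved.

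The paper's route avoids local analysis entirely. For odd $r$ it exploits the equivalence $(1-\xi^j)\sim(1-\xi^{2j})$ (valid since $(j,r)=(2j,r)$) to rewrite $x_{\bar k}$ as a product of $(1-\xi^j)$ over \emph{odd} $j$ in a suitable range, and similarly $(\xi;\xi)_{\lfloor k/2\rfloor}$ via $j\mapsto r-2j$; the two products then dovetail exactly into $\prod_{j\text{ odd}}(1-\xi^j)\sim O_\xi$, giving the stronger conclusion $\sim$ rather than mere divisibility. For even $r$ the paper writes $O_\xi/(\xi;\xi)_{\lfloor k/2\rfloor}$ explicitly as $\prod_{i=1}^{r/2-1-\lfloor k/2\rfloor}(1+\xi^i)$ and observes that this product visibly divides $x_{\bar k}$ by pairing factors $(1-\xi^{2i})/(1-\xi^i)=1+\xi^i$. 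This is both shorter and sidesteps the parity bookkeeping you flagged as delicate. If you want to salvage your local approach, the missing ingredient is a correct valuation-preserving bijection (or a direct count of multiples of $p^i r'$ in each interval); the doubling map $j\mapsto 2j\pmod r$ underlying the paper's argument is one way to find it.
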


\begin{proof} 
%Parts a) and b) are well-known but we supply the proof for completeness.
a) Let $c= (a,r) =(b,r)$. Since $1-\xi^c$ divides $1-\xi^a$,  and also $1-\xi^a$ divides $1-\xi^c=1-\xi^{a a^*}$
where $a a^* \equiv c \pmod r$, we have $1-\xi^a \sim 1-\xi^c$. Similarly $1-\xi^b\sim 1-\xi^c$.

Part b) is obtained by substituting $X=1$ into the following identity.
$$
 1+X+\dots+X^{r-1}=\frac{1-X^r}{1-X}=\prod^{r-1}_{i=1} (X-\xi^i)\ .
$$

Part c) follows from the fact that
every Dedekind domain is integrally closed.

d) Let $y = y_1/y_2$ and $z = z_1/z_2$ with $y_1,y_2,z_1,z_2 \in \BZ[\xi]$ and $z_i \neq 0$. Then for infinitely many $s>0$,
$  z_1 y_1^s = a_s z_2 y_2^s$, and hence
\be  (z_1) (y_1)^s = (a_s) (z_2) (y_2)^s,
\label{e403}\ee
where $(x)$ denotes the principal ideal in $\BZ[\xi]$ generated by $x$.
In any Dedekind domain, every ideal
decomposes uniquely into a product of prime ideals:
$$(x)=\prod_i \mathfrak p^{e_i}_i\ $$
and this decomposition respects the multiplication.
From the uniqueness of prime ideal decomposition and \eqref{e403},
we see easily that $y_2\mid y_1$, or $y = y_1/y_2 \in \BZ[\xi]$.

e) First suppose $r$ is odd. Then $ O_\xi = (\xi;\xi)_{\frac{r-1}{2}}$. Since $(1-\xi^k) \sim (1-\xi^{r-k})$
by part (a), we have
$$
O_\xi^2 \sim  (\xi; \xi)_{r-1} = r.
$$

Now suppose $r$ is even. Then $ O_\xi = (\xi;\xi)_{\frac{r-2}{2}}$. Using $(1-\xi^k) \sim (1-\xi^{r-k})$, we have
$$
O_\xi^2 \sim  (\xi; \xi)_{r-1}/ (1-\xi^{r/2}) = r/2
$$
since $\xi^{r/2}=-1$.

f)
First suppose $r$ is odd. Note that for odd $r$,
 $(1- \xi^{j}) \sim (1-\xi^{2j})$ by part (a). One has
\begin{align}
x_{\bar k} = x_{r-k-1} & = \frac{\prod_{j=1}^{r-k-1}(1-\xi^j)}{\prod_{j=1}^{\lfloor \frac{r-k-1}2\rfloor}
(1-\xi^j)} \notag\\
&\sim \frac{\prod_{j=1}^{r-k-1}(1-\xi^j)}{\prod_{j=1}^{\lfloor \frac{r-k-1}2\rfloor}
(1-\xi^{2j})}
 \quad \text{since $(1- \xi^{j}) \sim (1-\xi^{2j})$} \notag \\
& \sim (1-\xi)(1-\xi^3)\cdots (1-\xi^{r-2-2\lfloor\frac{k}2\rfloor})\ . \label{e3001}
\end{align}
Using $(1-\xi^j) \sim (1-\xi^{2j}) \sim (1-\xi^{r-2j})$, we have
\be (\xi;\xi)_{\lfloor\frac k2\rfloor} = \prod_{j=1}^{\lfloor\frac k2\rfloor}
 (1- \xi^{j})  \sim \prod_{j=1}^{\lfloor\frac k2\rfloor} (1-\xi^{r-2j}). \label{e3002}
\ee
Multiplying \eqref{e3001} and \eqref{e3002}, we get
\be
 (\xi;\xi)_{\lfloor\frac k2\rfloor}\,  x_{\bar k} \sim \prod_{j=1}^{(r-1)/2}(1-\xi^{2j-1}) \sim O_\xi\ ,
 \label{e3006}
 \ee
where the second $\sim$ follows from the fact that $1-\xi^{r-a}\sim 1-\xi^a$ for any integer $a$.

Now suppose $r$ is even.
\begin{align}
\frac{O_\xi}{(\xi; \xi)_{\lfloor\frac k2\rfloor}}& = (1-\xi^{r/2-1}) (1-\xi^{r/2-2}) \cdots
(1-\xi^{\lfloor\frac k2\rfloor+1}) \notag \\
& =(1+\xi)(1+\xi^2)\cdots (1+\xi^{\frac r2-1-\lfloor\frac k2\rfloor}). \label{e3003}
\end{align}
On the other hand there exists $f\in\Zx$ such that
\begin{align} x_{\bar k}=x_{r-k-1} &= \frac{\prod_{j=1}^{r-k-1}(1-\xi^j)}
{\prod_{j=1}^{\lfloor\frac {r-k-1}2\rfloor}(1-\xi^j)} \notag\\
&= f\ \frac{\prod_{j=1}^{\lfloor\frac {r-k-1}2\rfloor}(1-\xi^{2j})}
  {\prod_{j=1}^{\lfloor\frac {r-k-1}2\rfloor}(1-\xi^j)} \notag\\
&=f\ (1+\xi)(1+\xi^2)\cdots (1+\xi^{\lfloor\frac {r-k-1}2\rfloor}). \label{e3004}
\end{align}
Note that ${\lfloor\frac {r-k-1}2\rfloor} = \frac r2-1-\lfloor\frac k2\rfloor$ for even $r$.
Compare \eqref{e3003} and \eqref{e3004}, we see that $O_\xi$ divides $(\xi;\xi)_{\lfloor\frac k2\rfloor}
\, x_{\bar k}$.
\end{proof}

\subsection{Quadratic Gauss sums}\label{sec3.3}
For arbitrary integers  $b$ and $d$, the quadratic Gauss sum is defined as
\begin{equation*}
G(b,d,\xi):=\sum^{\ord(\xi)-1}_{n=0} \xi^{b n^2+d n} %= \sum^{r-1}_{n=0} e^{2\pi i (b n^2+d n)/r}
\ .
\end{equation*}
% Note that $\sqrt r \in \BZ[\xi^{1/4}]$. Also the pure complex $i$ belongs to $\BZ[\xi^{1/4}]$.
The following is well-known. % We include a proof for completeness.

\begin{prop}\label{lGauss}
a) Let $r=\ord(\xi)$ and $c= \gcd(b,r)$. Then
$$
G(b,d,\xi) = \begin{cases}
c\, G(b/c, d/c, \xi^c) & \textrm{ if }\ c\,|\,d \ ;\\
0 & \textrm{ otherwise }.
\end{cases}
$$

b) Suppose $b$  and $r$ are co-prime. Then
 $$ G(b,0,\xi)^2 \sim   \begin{cases}   r  & \text{ if $r$ is odd}\ ; \\
   0 & \text{ if $r\equiv 2 \pmod 4$}\ ; \\
   2r & \text{ if $r\equiv 0 \pmod 4$}\ .
   \end{cases}
   $$
Furthermore  $ G(b,b,\xi)^2 = 2r $ if $ r \equiv 2 \pmod 4$.

c) Suppose $d$ is odd and $r \equiv 0 \pmod 4$. Then $G(b,d,\xi)=0$.

d) Suppose $r_1$ and $r_2$ are co-prime and $r = r_1 r_2$. Then
\be\label{chin}
G(b,d,\xi)=G(b r_1,d,\xi^{r_1})\, G(b r_2,d,\xi^{r_2}).\ee
\end{prop}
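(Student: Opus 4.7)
The plan is to derive parts (a), (c), and (d) by direct manipulation of the defining sum $G(b,d,\xi)=\sum_{n=0}^{r-1}\xi^{bn^2+dn}$ and to reduce part (b) to the classical evaluation of quadratic Gauss sums.

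For part (a), write each $n \in \{0,\ldots,r-1\}$ uniquely as $n = k + m(r/c)$ with $k \in \{0,\ldots,r/c-1\}$, $m \in \{0,\ldots,c-1\}$. Since $c \mid b$ and $c \mid r$, the mixed and quadratic terms in $m$ contribute exponents in $r\BZ$, so $\xi^{bn^2} = \xi^{bk^2}$, while $\xi^{dn} = \xi^{dk}\,\omega^{dm}$ with $\omega := \xi^{r/c}$ a primitive $c$-th root of unity. The inner sum over $m$ equals $c$ if $c \mid d$ and $0$ otherwise, and the outer sum over $k$ is precisely $G(b/c, d/c, \xi^c)$. For part (d), the Chinese remainder theorem gives a unique representation $n \equiv m r_2 + \ell r_1 \pmod r$ with $m \in \{0,\ldots,r_1-1\}$, $\ell \in \{0,\ldots,r_2-1\}$; since $r = r_1 r_2$ the cross term in $n^2$ vanishes mod $r$, so $\xi^{bn^2+dn}$ factors as $(\xi^{r_2})^{br_2 m^2 + dm}\,(\xi^{r_1})^{br_1 \ell^2 + d\ell}$, and the double sum separates into the claimed product.

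Part (c) and the $r \equiv 2 \pmod 4$ case of part (b) both come from the shift $n \to n + r/2$, which yields $G(b,d,\xi) = \xi^{br^2/4 + dr/2}\,G(b,d,\xi)$; it suffices to show the prefactor equals $-1$. Using $\xi^{r/2} = -1$, one has $\xi^{br^2/4} = (-1)^{br/2}$ and $\xi^{dr/2} = (-1)^d$. For (c), $r/2$ is even so $(-1)^{br/2} = 1$, while $(-1)^d = -1$ since $d$ is odd, forcing $G = 0$. In the $r \equiv 2 \pmod 4$ case of (b), $d = 0$ but $b$ must be odd (coprime to the even $r$) and $r/2$ is odd, so again the prefactor is $-1$ and $G = 0$.

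The remaining cases of (b) are the main obstacle and rely on the classical theory of quadratic Gauss sums. The identity $\xi^{bn^2}=(\xi^b)^{n^2}$ reduces to $b=1$ with $\xi$ any primitive $r$-th root. For $r$ odd, the standard evaluation $G(1,0,\xi)\,\overline{G(1,0,\xi)} = r$ together with the fact that $\overline{G(1,0,\xi)}/G(1,0,\xi)$ is a unit in $\BZ[\xi^{1/4},e_8]$ yields $G(1,0,\xi)^2 \sim r$. For $r \equiv 0 \pmod 4$, write $r = 2^a m$ with $m$ odd, $a \geq 2$, and apply part (d): the odd-order factor contributes $\sim m$ by the previous case, and the $2^a$-part reduces to the classical formula $G(1,0,e^{2\pi i/2^a})^2 = 2i\cdot 2^a \sim 2^{a+1}$ (using that $i = e_8^2$ is a unit); multiplying gives $\sim 2r$. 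Finally, the claim $G(b,b,\xi)^2 \sim 2r$ for $r \equiv 2 \pmod 4$ follows by applying (d) with $r_1 = 2$, $r_2 = r/2$: the factor at $\xi^{r/2} = -1$ evaluates explicitly to $2$, while the factor at the odd-order $\xi^2$ reduces by completing the square (valid since $2$ is invertible modulo $r/2$) to a standard Gauss sum whose square is $\sim r/2$, yielding the claimed $\sim 4\cdot(r/2) = 2r$.
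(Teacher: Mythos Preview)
Your proof is correct and follows essentially the same route as the paper. Parts (a), (c), (d), and the $r\equiv 2\pmod 4$ vanishing in (b) are argued identically (coset decomposition, shift by $r/2$, Chinese remainder theorem). For the nonvanishing cases of (b) the paper simply reduces to $b=1$, $\xi=e^{2\pi i/r}$ by a Galois transformation and cites Davenport, whereas you give a more explicit argument: $|G|^2=r$ for odd $r$, the classical formula for $r=2^a$, and a splitting via (d) for mixed $r$ and for $G(b,b,\xi)$. This is the same content unpacked rather than a different method.

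One small point: your assertion that $\overline{G}/G$ is a unit in the odd case is stated without justification. It is true (indeed $G(1,0,\xi)^2=\pm r$ exactly, so $\overline G/G=\pm 1$), but this is precisely the classical evaluation you cite for the $2^a$ case, so you may as well invoke it directly here too. Also, your conclusion $G(b,b,\xi)^2\sim 2r$ (rather than the paper's ``$=2r$'') is in fact the correct statement; for example with $r=6$, $b=1$, $\xi=e^{\pi i/3}$ one computes $G(1,1,\xi)^2=12\xi\neq 12$.
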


\begin{proof}
Part (a) is clear from the definition when $c\,|\,d$. Now suppose that $c\nmid d$. We have
\bas
G(b,d,\xi) & = \sum_{t=0}^{r/c-1} \sum_{s=0}^{c-1} \xi^{b(sr/c+t)^2 + d (sr/c + t)} \\
& = \sum_{t=0}^{r/c-1} \xi^{bt^2+dt} \sum_{s=0}^{c-1} \xi^{sdr/c} = 0\ ,
\end{align*}
where the last equality follows from the fact that $\xi^{dr/c}\ne 1$ and its order divides $c$.

b) After a Galois transformation of the form $ \xi \to \xi^a$, with $a$ co-prime to $r$, one can assume that $b=1$ and $\xi = \exp( 2\pi i/r)$. The result now follows e.g. from  \cite[Chapter 2]{Davenport}.

% If $b$ and $2r$ are co-prime, the Galois automorphism $\xi \to \xi^b$ (of $\BQ(\xi^{1/4})$ over $\BQ$) shows that

c) One has
\bas b\left(n + \frac r2\right)^2 + d\left(n + \frac r2\right) & = bn^2 + bnr + br \frac r4 + dn + d\frac r2 \\
&\equiv bn^2 + dn + \frac r2 \pmod r.
\end{align*}

Hence

$$ G(b,d,\xi)= \sum_{n=0}^{r-1} \xi^{bn^2 + dn} = \sum_{n=0}^{r-1} \xi^{b(n+ r/2)^2 + d(n+r/2)}= \xi^{r/2} \, \sum_{n=0}^{r-1} \xi^{bn^2 + dn} = - G(b,d,\xi).$$
It follows that $G(b,d,\xi) =0$.

d) The proof follows easily from the fact that the map $\BZ/r_1 \times \BZ/r_2 \to \BZ/(r_1 r_2)$, defined by
$ (n_1,n_2) \to r_2 n_1 + r_1 n_2$, is an isomorphism.
\end{proof}

\subsection*{Proof of Lemma \ref{lem12}}
Now we are in position to see that
$F^G_{U^+}(\xi)=0$ if and only if $G=SU(2)$ and $\xi^{1/4}$ has order $2r$.

By completing the squares
 we have
\bas
 & (1-\xi) F^G_{U^+}  \sim \frac{2}{1-\xi}
 \sumx^{\xi,G} q^{\frac{n^2-1}4}  (1-q^n) \sim \frac{2}{1-\xi}\left( \sumx^{\xi,G} q^{\frac{n^2-1}{4}}-
\xi^{-1}\sumx^{\xi,G} q^{\frac{(n+2)^2-1}4} \right)
 \\
 & \sim  \begin{cases}  G(4^*,0,\xi) & \text{ if  $G=SO(3)$ ;}\\
   \frac 12 G(1,0, \xi^{1/4}) \quad & \text{ if  $G=SU(2)$ and $\ord(\xi^{1/4})=4r$ ;}\\
    G(1,0, \xi^{1/4}) \quad & \text{ if  $G=SU(2)$ and $\ord(\xi^{1/4})=2r$ ;}\\
    2 G(1,0, \xi^{1/4}) \quad & \text{ if  $G=SU(2)$ and $\ord(\xi^{1/4})=r$ .}
 \end{cases}
\end{align*}
Note that for $G=SO(3)$, the sum is over odd $n$'s, so $n^2-1$ is always divisible by $4$. Hence, 
for any choice of $\xi^{1/4}$, we have $\xi^{(n^2-1)/4}=
\xi^{4^*(n^2-1)}$
with $4^*4=1 \pmod r$.

If $r$ is even, then $\ord(\xi^{1/4})$ is always $4r$.
Now  Proposition \ref{lGauss} (b) implies the claim.
\qed

%\begin{lemma}
%Suppose $\xi^{1/4}$ is a 4-th root of $\xi$ (such that $\xi^{1/4}$ has order $4r$ {\bf Do we need this?}). Then $\cD^G \in \Z[\xi^{1/8}]$. In addition $\sqrt r, \sqrt 2$ belongs to $\Zxf$.
%\end{lemma}
%{\bf Note that $\xi$ has four 4-th roots. If $r=\ord(\xi)$ is even, then any 4-th root of $\xi$ has order $4r$. If $r$ is odd, then one 4-th root of $\xi$ has order $r$, one has order $2r$, and two have order $4r$.}
%{\bf The following proof is not complete.}

%\begin{proof}
%Using the fact that $F$ is invariant under the second Kirby move we have
%$$
%(\cD^G)^2 = F^G_{U^+}(\xi) \, F^G_{U^-}(\xi) = F^G_{U}(\xi)\ ,
%$$
%where $U$ is the 0 framing unknot.
%Since $n$ and $4r-n$ have the same parity we have
%$$
%(\cD^G)^2 = \sumx^{\xi, G}[n]^2 = \frac1{\ex\cb1^2}\sumx^{\xi, G}(q^n-2+q^{-n})
%= \frac2{\ex\cb1^2}\sumx^{\xi, G}(q^n-1)\ .
%$$

%\end{proof}

\subsection{Simplification of $H^G(k,b, \ve)$}\label{simp}

\begin{lemma}\label{h1}
a)  For integers $k,b$, and $\ve \in \{0,1\}$,  there is $f_\ve(z,q) \in I_{2k+1+\ve}$ such that
$$
H^{G}(k, b,\ve)  \sim \frac2{x_{2k+1+\ve}}\sumx^{\xi,G} q^{\frac{ b (n^2-1)}4 - \frac{3\ve n}2}  f_\ve(q^n,q)\ .
$$
More precisely, one can choose $f_\ve= z^{-k}\, (q^{-k}z; q)_{2k+1+\ve}$.

b) One has $\sqrt 2, \sqrt r \in \Z[\xi^{1/4}, e_8]$ and
$$ H^G(0,\pm 1,0) \sim  
\begin{cases} \sqrt r  \quad & \text{ if  $G=SO(3)$ ;}\\
  \sqrt{2r} & \text{ if  $G=SU(2)$ and $\ord(\xi^{1/4})=4r$ ;}\\
%    0 \quad & \text{ if  $G=SU(2)$ and $\ord(\xi^{1/4})=2r$ ;}\\
    2 \sqrt r \quad & \text{ if  $G=SU(2)$ and $\ord(\xi^{1/4})=r$ .}
\end{cases}
$$

c) One has $\cD^G \in \Z[\xi^{1/4}, e_8]$ and $(1-\xi) \cD^G \sim H^G(0,\pm1,0)$.

d)  Suppose $b$ and $r$ are even. 
Then $H^{SU(2)}(k,b,1)=0$.
\end{lemma}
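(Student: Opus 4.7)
The plan is to exploit a symmetry of the summand in \eqref{eq.h} under the shift $n\mapsto n+r$ on the $SU(2)$ summation range $\{0,1,\dots,4r-1\}$. Since $4r$ is divisible by $r$, this shift is a cyclic bijection of the index set. Write $\eta:=\xi^{1/4}$. Using $\eta^{8r}=\xi^{2r}=1$, each ingredient $\{n\}=\eta^{2n}-\eta^{-2n}$, $\lambda_n=\eta^{2n}+\eta^{-2n}$, $\qbinom{n+k}{2k+1}$, and $\eta^{b(n^2-1)}$ is $4r$-periodic in $n$, so reindexing preserves the sum. It therefore suffices to show that each individual summand $f(n)$ in \eqref{eq.h} satisfies $f(n+r)=-f(n)$.

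The sign analysis rests on the identity $\xi^{r/2}=-1$, which holds because $\xi$ has exact order $r$ and $r$ is even. From this one gets immediately $\{n+r\}=\xi^{r/2}\eta^{2n}-\xi^{-r/2}\eta^{-2n}=-\{n\}$ and likewise $\lambda_{n+r}=-\lambda_n$, while the product expansion $\qbinom{n+k}{2k+1}\cdot\{2k+1\}!=\prod_{j=-k}^{k}\{n+j\}$ shows that the $q$-binomial picks up a factor of $(-1)^{2k+1}=-1$. Since $\lambda_{k+1}$ and $\{k\}!$ do not depend on $n$, the three sign changes coming from $\{n\}$, $\lambda_n$, and $\qbinom{n+k}{2k+1}$ that appear in the $\varepsilon=1$ summand combine to a net factor of $(-1)^3=-1$, apart from the quadratic phase.

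The most delicate step is checking that the phase $\eta^{b(n^2-1)}$ is invariant under $n\mapsto n+r$; this is where both hypotheses ($b$ even and $r$ even) come into play jointly. The ratio is $\eta^{b(2rn+r^2)}=\xi^{b(2rn+r^2)/4}$. Writing $b=2b'$, this becomes $\xi^{b'rn}\cdot\xi^{b'r^2/2}$, and both factors equal $1$: the first because $\xi^r=1$, and the second because, with $r$ even, $b'r^2/2=b'r\cdot(r/2)$ is an integer multiple of $r$. Combining with the previous paragraph, each summand satisfies $f(n+r)=-f(n)$, so $H^{SU(2)}(k,b,1)=-H^{SU(2)}(k,b,1)=0$. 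The only minor bookkeeping needed is to treat $1/\lambda_{k+1}$ as an $n$-independent constant, which plays no role in the termwise symmetry.
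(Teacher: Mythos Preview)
Your proof of (d) is correct. The paper takes a different route: it first invokes part (a) to rewrite $H^{SU(2)}(k,b,1)$, up to a unit, as $\frac{2}{x_{2k+2}}\sum_n^{\xi,SU(2)} q^{b(n^2-1)/4 - 3n/2} f_1(q^n,q)$ with $f_1\in I_{2k+2}$, reduces by $\Zq$-linearity to monomials $f=z^a$, and recognizes the resulting sum as a constant times the Gauss sum $G(b',2a-3,\xi^{1/2})$ (where $b=2b'$), which vanishes by Proposition~\ref{lGauss}(c) since $2a-3$ is odd and $\ord(\xi^{1/2})=2r\equiv 0\pmod 4$. The mechanism behind Proposition~\ref{lGauss}(c) is exactly your shift $n\mapsto n+r$ (written there as $n\mapsto n+\ord(\xi^{1/2})/2$), so the two arguments are close relatives; yours is more direct and self-contained, bypassing both part (a) and the Gauss-sum formalism, while the paper's version keeps (d) inside the same toolkit used throughout Section~\ref{root}. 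One small remark: your argument tacitly assumes $\lambda_{k+1}\neq 0$ so that the summand is defined; this is harmless since the quantity $H^{SU(2)}(k,b,1)$ itself is only meaningful under that assumption.
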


\begin{proof}
a)   We will use the following simple observation: For  $g(z,q) \in \BQ[z^{\pm 1/2}, q^{\pm 1/4}]$, we have
\be\label{siSU}
 \sumx^{\xi, G}\ q^{\frac{ b (n^2-1)}4} g(q^{n/2},q) =
\sumx^{\xi, G} q^{\frac{ b (n^2-1)}4} g(q^{-n/2},q).
\ee
To prove it, one only needs to consider $g(z,q)=z^{a/2}, a\in\Z$. 
Then 
$$\text{LHS} = \sumx^{\xi, G} \;q^{\frac{ b (n^2-1)}4+\frac{an}2} = {\sum_{n\to -n}}^{\xi, G} \; q^{\frac{ b (n^2-1)}4 - \frac{an}2} = \text{RHS}\ .$$

One can check that $\cb n\prod_{j=-k}^k\cb{n+j} = (q^{-kn-n}-q^{-kn}) (q^{n-k}; q)_{2k+1}$.
Then  we get
%$$\sigma(z^{-k} (q^{-k}z; q)_{2k+1})(q^n,q) = - q^{-kn-n} (q^{n-k}; q)_{2k+1}\ .$$
%Therefore
\begin{align*}
\ex \left(\frac{\cb{2k+1}!}{\cb k!}\right) H^{G}(k, b,0) &=
\sumx^{\xi, G} q^{\frac{ b (n^2-1)}4}
\cb n\prod_{j=-k}^k\cb{n+j} \nonumber\\
%& = - \sumx^{\xi,G} q^{\frac{ b (n^2-1)}4}  (\id + \sigma)(z^{-k} (q^{-k}z; q)_{2k+1})(q^n, q) \nonumber\\
 & =- 2 \sumx^{\xi,G} q^{\frac{ b (n^2-1)}4}  q^{-nk} (q^{n-k}; q)_{2k+1}  \ ,
\end{align*}
where the last equality follows from  \eqref{stand},
\eqref{siSU} and the fact that 
$$q^{-kn-n} (q^{n-k}; q)_{2k+1}= -q^{-kn}(q^{n-k}; q)_{2k+1}\mid_{n\to -n} \ .$$

Analogously, we have
\begin{align*}
\ex \left(\frac{\cb{2k+1}!}{\cb k!} \lam_{k+1}\right) H^{G}(k, b,1) &=
\sumx^{\xi, G} q^{\frac{ b (n^2-1)}4}
\cb n \, \lam_n  \prod_{j=-k}^k\cb{n+j} \nonumber\\
%& = - 2 \sumx^{\xi,G} q^{\frac{ b (n^2-1)}4}  (\id + \sigma)(z^{-k-3/2}
%(q^{-k}z; q)_{2k+2})(q^n, q) \nonumber\\
 & =- 2 \sumx^{\xi,G} q^{\frac{ b (n^2-1)}4}  q^{-n(k+3/2)} (q^{n-k}; q)_{2k+2} \ .
\end{align*}
%where the last equality follows from \eqref{siSU}.
This proves a).

b) Let us first show that $\sqrt r,\sqrt 2 \in \Z[\xi^{1/4}, e_8]$.
Observe that $\sqrt 2 \in \Z[e_8]$. Further,
$$ \prod^{(r-1)/2}_{j=1} |1-\xi^j| = 
 \begin{cases} \sqrt r & \text{ if $r$ is odd;}\\
 \sqrt{r/2} & \text{ if $r$ is even.}
 \end{cases}
$$
Since $|1-\xi^j|=\pm\sqrt{-1}\,(\xi^{j/2}-\xi^{-j/2})$,
we have $\sqrt r\in \Z[\xi^{1/4}, e_4]$.

Part (b)  follows now from \eqref{tri}, Proposition \ref{lGauss} (b) and the 
proof of Lemma \ref{lem12}.

c) %{\bf WRITE UP PROOF} In particular, when $k=0, b= \pm 1$, 
%  Here when $r$ is odd, $4 4^*\equiv 1$ mod $r$.
%The second statement
%From Proposition \ref{lGauss} (b) and 
Since $\cD_G:=|F^G_{U^+}|$, from the proof of Lemma \ref{lem12}, we get
$$|1-\xi|\cD^G      
=  \begin{cases}  \sqrt r & \text{ if  $G=SO(3)$ ;}\\
   \sqrt {2r}\quad & \text{ if  $G=SU(2)$ and $\ord(\xi^{1/4})=4r$ ;}\\
%   0 & \text{ if  $G=SU(2)$ and $\ord(\xi^{1/4})=2r$ ;}\\
   2 \sqrt r\quad & \text{ if  $G=SU(2)$ and $\ord(\xi^{1/4})=r$ .}
 \end{cases}
 $$
Clearly, $\sqrt r$ is divisible by $|1-\xi|$, so $\cD^G \in \Z[\xi^{1/4}, e_8]$.
The second statement follows from (b).
%$$(1-\xi) \cD^G \sim H^G(0,\pm 1,0)\, $$ as desired. 

d) By part (a), it is enough to show that 
$$\sumx^{\xi,G} q^{\frac{ b (n^2-1)}4 - \frac{3n}2}  f(q^n,q)=0$$
for any $f \in \Zzq$. We can assume $f=z^a, a\in \Z$.  Assume $b= 2b'$. Then

\begin{align*}
4 \sumx^{\xi,SU(2)} q^{\frac{ b (n^2-1)}4 - \frac{3n}2}  q^{na} =
  2 \xi^{-b'/2}  \sum_{n=0}^{2r-1}  \left (\xi^{1/2}\right)^ {b' n^2 -3n + 2na}= 
2 \xi^{-b'/2}  \, G(b', 2a-3, \xi^{1/2}),
\end{align*}
which is 0 by Proposition \ref{lGauss} (c), since $\ord(\xi^{1/2})$ is always $2r$ if $r$ is even.
\end{proof}

%\begin{rmk}
%{\rm The proof of Lemma \ref{h1}  (b) 
%implies that $\cD^G \in \Z[\xi^{ 1/4}, e_8]$ which we claimed to show when introducing $\cD^G$ in Section 1.4.}
%\end{rmk}

\subsection{Lens spaces} 
%{\bf Rewrite this part}
Suppose $L\sqcup L'$ is the Hopf link with framing $b\neq 0$ on $L$ and framing 0 on $L'$. Besides, the color of $L'$ is a fixed number $a$.
By surgery on $L$ from $(S^3,L')$ we get the pair $(\L({b,1}),L')$,
where $\L(b,1)$ is the lens space.
It is known that $J_{L\sqcup L'}(n) = q^{b(n^2-1)/4}\, [na]$. Hence we have
\be \label{17}
\tau^G_{\L({b,1}),L'}(\xi)= \frac{{\sum_n}^{\xi,G} q^{b (n^2-1)/4} [na][n]}{{\sum_n}^{\xi,G} q^{\sn(b) (n^2-1)/4} [n]^2}\ .
\ee

Note that the invariant of $\L(b,-1)=\L(-b,1)$ is just a complex conjugate of
\eqref{17}.

\begin{lemma}\label{lens} a) If $b$ and $r$ are co-prime, 
then $\tau^{SO(3)}_{\L({b,1})}$ is invertible in $\Z[\xi]$.

b) Suppose $r$ is even. For $b=2^k$,
 there is a knot $K$ in the lens space $M= \L(2^k,-1)$ colored by an odd number  such that
$$\tau^{SU(2)}_{M, K}(\xi)\not= 0\, .$$
\end{lemma}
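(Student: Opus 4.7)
The plan is to compute both invariants by reducing to quadratic Gauss sums and then invoking Proposition \ref{lGauss}.

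For part (a), formula \eqref{17} with trivial $L'$ gives $\tau^{SO(3)}_{\L(b,1)}(\xi) = F^{SO(3)}_{U^b}(\xi)/F^{SO(3)}_{U^{\sn(b)}}(\xi)$. I would parameterize the $SO(3)$-sum (over odd $n\in[0,4r)$) by $n=2m+1$, fold $m\in[0,2r)$ down to $m\in[0,r)$ using $\xi^r=1$, and expand $[n]^2 = \{n\}^2/\{1\}^2$ in terms of $q^{\pm n}$. This expresses $\ex(\{1\})^2 F^{SO(3)}_{U^b}(\xi)$ as a small combination of the Gauss sums $G(b,b+2j,\xi)$ for $j\in\{-1,0,1\}$. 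Since $\gcd(b,r)=1$ and $r$ is odd, $4b$ is invertible mod $r$, so completion of squares gives $G(b,d,\xi)=\xi^{-d^2(4b)^{-1}}G(b,0,\xi)$, and after bookkeeping the coefficient (which cancels in the quotient), one is reduced to analyzing
$$\frac{G(b,0,\xi)}{G(\sn(b),0,\xi)} = \frac{\sigma_b(G(1,0,\xi))}{\sigma_{\sn(b)}(G(1,0,\xi))},$$
where $\sigma_a\colon\xi\mapsto\xi^a$ is the Galois automorphism of $\Q(\xi)/\Q$. By Proposition \ref{lGauss}(b), $|G(1,0,\xi)|^2\sim r$, so the ideal $(G(1,0,\xi))\subset\Z[\xi]$ is supported only on primes above $r$. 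Such primes form a Galois-invariant set, hence $(G(1,0,\xi))$ is a Galois-stable ideal, and the Galois conjugates of $G(1,0,\xi)$ agree up to units. This yields invertibility in $\Z[\xi]$.

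For part (b), let $M=\L(2^k,-1)$ and let $K_a\subset M$ be the core of the surgery solid torus, colored by an odd integer $a$ to be chosen. By \eqref{17},
$$\tau^{SU(2)}_{M,K_a}(\xi) = N_a \big/ F^{SU(2)}_{U^-}(\xi), \quad N_a := \sum_n{}^{\xi,SU(2)} q^{-2^k(n^2-1)/4}\,[na]\,[n].$$
Since the denominator is non-zero by the standing assumption, it suffices to exhibit odd $a$ with $N_a\neq0$. Writing $\zeta=\xi^{1/4}$, expanding $\ex(\{1\})^2[na][n]$ into four $\zeta$-monomials, and using the invariance of $\sum_{n=0}^{4r-1}$ under $n\mapsto-n$, I would obtain
$$\ex(\{1\})^2 N_a \sim \zeta^{2^k}\bigl(T_k(a+1) - T_k(a-1)\bigr),\quad T_k(c):=\sum_{n=0}^{4r-1}\zeta^{-2^k n^2 + 2cn}.$$
Each $T_k(c)$ is a quadratic Gauss sum in $\zeta$. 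By Proposition \ref{lGauss}(a), $T_k(c)$ vanishes unless a $2$-adic divisibility condition relating $k$, $c$, and $\ord(\zeta)\in\{r,4r\}$ is satisfied; when it holds, $T_k(c)\ne0$ by Proposition \ref{lGauss}(b). Since $a+1$ and $a-1$ are consecutive even integers of different $2$-adic valuations, I can select $a$ odd so that exactly one of $a\pm1$ satisfies the divisibility, forcing exactly one of $T_k(a+1), T_k(a-1)$ to vanish and the other to be non-zero, so $N_a\neq0$. The trivial case $k=0$ is handled by $M=S^3$ and $a=1$.

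The main obstacle is the case analysis in (b). The standing assumption allows $\ord(\xi^{1/4})=r$ or $4r$, and each imposes a slightly different vanishing condition in Proposition \ref{lGauss}(a). A secondary difficulty appears when the governing $\gcd$ is small (say $\leq 4$): then \emph{both} Gauss sums $T_k(a\pm 1)$ are non-zero, and one must directly compare them by completing the square in each and showing that their $\zeta$-exponents differ, ruling out the accidental cancellation $T_k(a+1)=T_k(a-1)$. A careful tabulation of $v_2(r)$ versus $k$ and $\ord(\xi^{1/4})$ is needed to verify that a suitable odd $a$ exists in every case.
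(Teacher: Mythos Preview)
Your approach matches the paper's in both parts: reduce to quadratic Gauss sums and invoke Proposition~\ref{lGauss}. A couple of points sharpen what you have.

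For part~(a), the extra coefficient you dismiss as ``cancels in the quotient'' does not literally cancel; after completing the square the numerator and denominator each pick up a factor of the shape $(1-\xi^{-b^*})/(1-\xi^{-1})$ (with $bb^*\equiv 1\pmod r$), which is a unit by Lemma~\ref{ab}(a) since $(b^*,r)=(1,r)=1$. Your Galois argument for $G(b,0,\xi)/G(1,0,\xi)$ is valid, but the paper bypasses it: Proposition~\ref{lGauss}(b) already gives $G(b,0,\xi)^2\sim r\sim G(1,0,\xi)^2$ directly, so the ratio has unit square and is therefore a unit by Lemma~\ref{ab}(c).

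For part~(b), the case analysis you flag as the obstacle is cleaner than you fear. First, the hypothesis $r$ even forces $\ord(\xi^{1/4})=4r$ automatically (if $\ord(\xi^{1/4})\in\{r,2r\}$ then $\gcd(\ord(\xi^{1/4}),4)\le 2$, forcing $r=\ord(\xi)$ odd), so one of your two branches disappears. Second, with $\zeta=\xi^{1/4}$ of order $4r$ and $b=2^k$, the paper writes the numerator as $G(-b,4s+4,\zeta)-G(-b,4s,\zeta)$ and treats three cases. For $b=2$ take $s=0$: then $(M,K)=(\mathbb{R}P^3,\emptyset)$ and $\tau^{SU(2)}_{\mathbb{R}P^3}(\xi)\neq 0$ directly. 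For $b=4$ take $s=0$: here $c=\gcd(4,4r)=4$, so by Proposition~\ref{lGauss}(a) both $G(-4,4,\zeta)=4\,G(-1,1,\xi)$ and $G(-4,0,\zeta)=4\,G(-1,0,\xi)$ reduce to Gauss sums at $\xi$, and Proposition~\ref{lGauss}(b),(c) show that \emph{exactly one} of these vanishes (which one depends on $r\bmod 4$), so no cancellation can occur. For $b=2^k\ge 8$ one has $c=\gcd(b,4r)>4$; choose $s$ with $c\mid 4s$ so that $G(-b,4s,\zeta)\neq 0$, and then $c\nmid 4s+4$ forces $G(-b,4s+4,\zeta)=0$. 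This exhausts all cases without the tabulation you anticipated.
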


\begin{proof} a) The  $SO(3)$
invariant of $\L(b,1)$ can be easily computed. By completing the square we have
%(e.g. \cite[eq. (3)]{Le4}).
%For $\L(b,1)$ we have
$$\tau^{SO(3)}_{\L(b,1)}(\xi)\;=\;
\xi^{(\sn(b)-b)/4}\;
\frac{(1-\xi^{-b^*})}{(1-\xi^{-1})} \;  \frac{G(b,0,\xi)}{G(1,0,\xi)}\, ,$$
which is a unit in $\Z[\xi^{1/4}]$ by Proposition \ref{lGauss} (b). Here
$b^*b\equiv 1\pmod r$. 
%$\left(\frac{b}{r}\right)$ is the Jacobi symbol.

b) Let $L\sqcup L'$ be the Hopf link with framing $-b=-2^k$ on $L$ and framing $0$ on $L'$.
 Suppose $L'$ is colored by $a=2s+1$.
Surgery on $L$ gives us a pair $(M, K)=(\L(2^k,-1),K)$.
%The colored Jones polynomial of the Hopf link is well-known and simple:
%\be\label{ho}
%J_\cH(n,a) = q^{-b (n^2-1)/4} [na] .
%\ee

%By definition
%$$ \tau^{SU(2)}_{M, K}(\xi) = \frac{{\sum_n}^{\xi, SU(2)}
%q^{-b\frac{n^2-1}{4}}[n][na]}{{\sum_n}^{\xi, SU(2)}
%q^{-\frac{n^2-1}{4}}[n]^2}\ .$$

An easy calculation shows
\be
 \tau^{SU(2)}_{M, K}(\xi) \sim \frac{G(-b,4s+4, \xi^{1/4}) - G(-b,4s, \xi^{1/4}) }{(1-\xi)\, G(-1,0, \xi^{1/4})}\ .
 \label{e338}
 \ee

For $b=2$, then $M =\R P^3$.  Choose $s=0$, or $a=1$. Then  $\tau_{M,K}^{SU(2)}(\xi)= \tau_M^{SU(2)}(\xi)\neq 0$.
%\cite[Section 5.1]{bbl2}.

For $b=4$ again choose $s=0$. Then  one and only one term in the numerator of \eqref{e338} is  zero, by Proposition \ref{lGauss}.

Suppose $b=2^k>4$. Then $c:=(b,4r)>4$.
Choose $s$ such that $G(-b, 4s, \xi^{1/4})\neq 0$ (see Proposition \ref{lGauss}). Then $c \mid 4s$, and $c$ does not  divide $4s+4$. Hence
$G(-b, 4s+4, \xi^{1/4})= 0$. We conclude that $\tau^{SU(2)}_{M, K}(\xi)\neq 0$.
\end{proof}

\section{Symmetry Principle and splitting of the $SU(2)$ invariant}\label{sys}
The symmetry principle of the colored Jones polynomial and the splitting of the $SU(2)$  WRT
invariant were discovered by Kirby and Melvin in \cite{KM}. In \cite{Le_Duke, Le4}, the third author generalized these
to all higher ranked Lie groups.
% using the geometry of the Weyl 
%group and a tensor product theorem of Lusztig's in quantum group theory. 
Here we extend  the symmetry principle and splitting to the case
of pairs of a 3-manifold and a colored link inside.  
We show  that the symmetry principle for a link in an arbitrary 3-manifold 
holds only  for $SO(3)$ invariant, but does not hold for the $SU(2)$ invariant.
\subsection{Symmetry Principle for links in $S^3$}

\def\al{\alpha}

Suppose $\xi$ is a root of unity of order $r$. Then the colored Jones polynomial at $\xi$ is periodic  with period $2r$, i.e.
\be\label{s1} 
\ev_\xi \big( J_L( n_1, \dots, n_i + 2r, \dots, n_m) \big) = \ev_\xi \big( J_L( n_1,\dots, n_i, \dots, n_m) \big)\ ,
\ee
and under the reflection $r-n \to r+n$ it behaves as follows: 
\be\label{s2}
 \ev_\xi \big( J_L(n_1,\dots, r+n_i, \dots, n_m) \big) = -
 \ev_\xi \big( J_L(n_1,\dots, r-n_i, \dots, n_m) \big)\ 
\ee
(see \cite{Le_Duke}).
This means that one can restrict the colors to the interval $[0,r]$.

The symmetry principle tells us how $J_L$ behaves under  the transformation $n \to r-n$.
More precisely, let
 $\BZ/2 = \{0,1\}$ act on $\BZ/r\Z$ by $0 * n = n$, $1*n = r-n$. 
For $\a = (a_1,\ldots,a_m) \in \{0,1\}^m$ and $\bn = (n_1, \ldots, n_m) \in (\Z/r\Z)^m$,
let $\a * \bn = (a_1*n_1,\ldots, a_m*n_m)$. In addition, we set
$\hat n := n-1$ for any integer $n\in \Z$.

\begin{prop}\label{lsy}
 Suppose $(\ell_{ij})$ is the linking matrix of $L$. With the notations as above one has 
\bes
\ev_\xi \big( J_L( \a * \bn )\big) = \left (-\xi^{r/2}\right )^{\sum_i a_i}\, \xi^t \ex \big( \,  J_L( \bn ) \big)\ ,
\ees
where
\be \label{ttt}
t= \frac{r(r-2)}4 \sum_{i,j} \ell_{ij} a_i a_j + \frac r 2 \sum_{i,j} \ell_{ij} a_i \hat n_j\ ,
\ee
and $(\ell_{ij})$ is the linking matrix of $L$.
\end{prop}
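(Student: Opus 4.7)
The plan is to reduce to a single reflection and then compose. I would induct on $|\a|:=\sum_i a_i$. The base case $|\a|=0$ is trivial ($t=0$ and both sides equal $\ev_\xi(J_L(\bn))$), so the real content is (i) the single-reflection step $\a=e_k$ and (ii) the composition of single reflections.

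For the single-reflection step, the claim reads
\[
\ev_\xi\bigl(J_L(n_1,\dots,r-n_k,\dots,n_m)\bigr) = -\xi^{r/2}\,\xi^{t_k}\,\ev_\xi\bigl(J_L(\bn)\bigr),\quad t_k=\tfrac{r(r-2)}{4}\ell_{kk}+\tfrac{r}{2}\sum_j\ell_{kj}\hat n_j,
\]
and I would prove it inside the Reshetikhin--Turaev description of $J_L$ as the quantum trace of a tangle operator with the $k$-th strand coloured by $V_{n_k}$. Three ingredients combine. First, the scalar symmetry $\ev_\xi([r-n])=-\xi^{r/2}\ev_\xi([n])$, which follows immediately from $[n]=(q^{n/2}-q^{-n/2})/(q^{1/2}-q^{-1/2})$ and $\xi^{r/2}=\xi^{-r/2}$, supplies the overall factor $-\xi^{r/2}$. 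Second, the ribbon twist on the $k$-th strand contributes $\xi^{\ell_{kk}(n_k^2-1)/4}$; its change under $n_k\mapsto r-n_k$ equals $\xi^{\ell_{kk}\,r(r-2n_k)/4}=\xi^{\ell_{kk}(r(r-2)/4\,-\,r\hat n_k/2)}$, giving the diagonal piece of $t_k$ and the $-\tfrac{r}{2}\ell_{kk}\hat n_k$ contribution to the bilinear part. Third, for each $j\neq k$ the $\ell_{kj}$ crossings between strands $k$ and $j$ contribute $R$-matrix eigenvalues that are diagonal on the weight basis; their combined change under $n_k\mapsto r-n_k$ is $\xi^{\frac{r}{2}\ell_{kj}\hat n_j}$ modulo $\xi^r=1$, supplying the remaining off-diagonal pieces of $t_k$.

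For the inductive step I would iterate: applying the single-reflection identity to the indices $i_1,\ldots,i_s$ where $a_{i_\ell}=1$, in order, produces $(-\xi^{r/2})^{|\a|}$ together with a total phase $\xi^{t_{i_1}+t_{i_2}'+\cdots+t_{i_s}'}$, where each $t_{i_\ell}'$ is computed using the colors after the preceding reflections. Since reflecting at $i_j$ sends $\hat n_{i_j}\mapsto r-2-\hat n_{i_j}$, the term $t_{i_\ell}'$ picks up, for each earlier reflected index $i_j$, an extra $\tfrac{r}{2}\ell_{i_\ell i_j}(r-2)-r\,\ell_{i_\ell i_j}\hat n_{i_j}$. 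The pieces $\tfrac{r(r-2)}{2}\ell_{i_\ell i_j}$, combined with the diagonal $\tfrac{r(r-2)}{4}\ell_{i_\ell i_\ell}$ and the symmetry $\ell_{ij}=\ell_{ji}$, assemble into the full symmetric quadratic form $\tfrac{r(r-2)}{4}\sum_{i,j}\ell_{ij}a_ia_j$, while the residual corrections $-r\,\ell_{i_\ell i_j}\hat n_{i_j}$ are integer multiples of $r$ in the exponent and vanish as phases since $\xi^r=1$. The surviving linear part recombines into $\tfrac{r}{2}\sum_{i,j}\ell_{ij}a_i\hat n_j$, matching \eqref{ttt}.

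The main obstacle throughout is this phase bookkeeping. In the single-reflection case, the delicate point is cleanly isolating the ribbon, braiding, and scalar contributions so that they sum to exactly $t_k$ (rather than $t_k$ plus a spurious $\xi^{r\cdot(\cdot)}$ correction), which requires careful use of the weight-space structure of the $R$-matrix. In the inductive step, one must check that the non-commutativity of successive reflections (encoded by the $r\ell_{i_\ell i_j}\hat n_{i_j}$ terms above) produces only $\xi^r$-trivial phases and that the quadratic cross-terms symmetrize correctly — both of which hinge on the symmetry of the linking matrix together with $\xi^r=1$.
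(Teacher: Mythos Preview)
The paper does not argue directly: it cites the $sl_2$ case of \cite[Theorem 2.6]{Le_Duke} and notes that the factor $(-\xi^{r/2})^{\sum_i a_i}$ reflects the normalization difference (that reference works with $Q_L = J_L \cdot \prod_i [n_i]$). Your self-contained inductive approach is therefore a genuinely different route. The reduction to a single reflection and the phase bookkeeping in the composition step are correct and would indeed assemble \eqref{ttt} once the single-reflection case is in hand.

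The gap is your third ingredient in the single-reflection step. The assertion that the crossings ``contribute $R$-matrix eigenvalues that are diagonal on the weight basis'' with combined change $\xi^{\frac{r}{2}\ell_{kj}\hat n_j}$ is the heart of the symmetry principle, not a preliminary fact. The $R$-matrix on $V_{n_k}\otimes V_{n_j}$ is \emph{not} diagonal in the weight basis --- it is $q^{H\otimes H/4}$ times a nontrivial unipotent part in $E,F$ --- and $J_L$ is a trace of a composite of such operators, not a product over crossings, so passing from $V_{n_k}$ to $V_{r-n_k}$ is not a crossing-by-crossing scalar operation. The argument that actually produces these phases (as in \cite{KM} and \cite{Le_Duke}) realizes the reflection $n_k\mapsto r-n_k$ at $q=\xi$ as tensoring $V_{n_k}$ by an invertible (one-dimensional) object of the semisimplified category, and then reads off the twist and braiding of that invertible object via the cabling formula; it is this mechanism that yields $\tfrac{r(r-2)}{4}\ell_{kk}$ and $\tfrac{r}{2}\ell_{kj}\hat n_j$. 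Your outline correctly names the three locations where the phase accumulates, but the weight-basis heuristic by itself does not deliver the crossing contribution; to close the gap you would need to supply the tensor-with-invertible argument (or an equivalent direct computation tracking the full $R$-matrix, not only its Cartan part).
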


\begin{proof} This is the $sl_2$ case of \cite[Theorem 2.6]{Le_Duke}. The factor $\left (-\xi^{r/2}\right )^{\sum_i \al_i}$ comes from the difference between our $J_L$
and $Q_L$ in \cite{Le_Duke}, where $Q_L$ is equal to $J_L$ times the quantum dimensions of the colors on $L$.
\end{proof}

\begin{rmk}
If $\ord(\xi^{1/2})= 2r$, then $-\xi^{r/2}=1$, and  this case was considered in \cite{KM}. Proposition \ref{lsy} handles also 
the case when $\ord(\xi^{1/2})\neq 2r$, i.e. $\ord(\xi^{1/2})= r$.
\end{rmk}

A simple but useful
observation is that if all entries of $\bn$ are odd, 
then the second term in \eqref{ttt} 
is an integer multiple of $r$, hence can be removed.

\subsection{WRT $SO(3)$ invariant for an arbitrary colored link in $M$}\label{ext}
In the literature, the WRT $SO(3)$ invariant of the pair $(M, L')$ was defined in the case
when all colors of $L'$ are odd or all equal to 2 (compare \cite{MR}).
Here we extend this definition to arbitrary colors. 
Since the colors of $L'$ will play an important role in this section, we
will make the dependence on them explicit in the notation.

Note that $SO(3)$ invariants of $M$ with  evenly colored links inside are
 not coming from  Topological Quantum Field
Theories. The main reason is that fusion preserves odd colors.
However,  fusion of an odd and an even color produce
 an even color. This violates the invariance of $\eqref{inv}$ under sliding 
in the case when some of the $s_i$'s are even. We will  show that this defect
can easily be taken into account
by a simple factor depending on the linking matrix and parity of the colors only.

Throughout the remaining of this section let $r=\ord(\xi)$ be odd 
and $\bs = (s_1,\ldots, s_l)$ be the color on $L'$.
Let $(\ell_{ij})$ and $(p_{ij})$ be the linking matrices of $L$ and $L'$ respectively.
The linking number between the $i$-th component of $L$ and the $j$-th component of $L'$ will be denoted
by $\tilde\ell_{ij}$.

Let
\be\label{sta-SO(3)}
F^{SO(3)}_{L\sqcup L'}(\xi; \bs) : = \xi^{\mu{(L',\bs)}}
\sumxi^{\xi,SO(3)} [\bn]\ J_{L\sqcup L'}(\bn, \bs)\ ,
\ee
where $[\bn] := \prod_{i=1}^m[n_i]$ and
\bes\label{mu}
 \mu (L'; \bs):= -\frac{r(r-2)}4 \sum_{i,j=1}^l p_{ij}\hat s_i\hat s_j\ .
\ees

Observe that, when all $s_i$ are  odd, \eqref{sta-SO(3)} coincides with \eqref{sta}.
%The extra factor in $F_{L\sqcup L'}^{SO(3)}(\xi; \bs)$ is needed to insure
% invariance
%under sliding of evenly colored components of $L'$ along those of $L$. 
%move involving an even colored component of $L'$.

\begin{lemma}\label{2nd}
$F^{SO(3)}_{L\sqcup L'}(\xi; \bs)$ is invariant under the handle slide of a component of $L$ or $L'$ over a component of $L$.
\end{lemma}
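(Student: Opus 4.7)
The plan is to verify invariance under the two types of handle slide separately.

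First, sliding one component $L_a$ of $L$ over another component $L_b$ of $L$ leaves $L'$ (hence the prefactor $\xi^{\mu(L',\bs)}$) unchanged. The invariance of $\sumxi^{\xi,SO(3)}[\bn]\,J_{L\sqcup L'}(\bn,\bs)$ is then the classical $SO(3)$ handle-slide statement: the weight $\sum_{n\text{ odd}}[n]$ attached to each of $L_a$ and $L_b$ realizes the Kirby color of the $SO(3)$ modular subcategory generated by the odd simples, and the slide identity is local in the two sliding strands, so the colors of nearby $L'$-strands do not enter the argument.

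Now slide $L'_i$ over $L_j$, producing $\tilde L'$. The band-sum description $\tilde L'_i=L'_i\#_b L_j^{\parallel}$ yields
\[
\tilde p_{ii}-p_{ii}=2\tilde\ell_{ji}+\ell_{jj},\qquad \tilde p_{ik}-p_{ik}=\tilde\ell_{jk}\quad (k\neq i),
\]
and hence
\[
\mu(\tilde L',\bs)-\mu(L',\bs) \;=\; -\frac{r(r-2)}{4}\Bigl(\ell_{jj}\hat s_i^2 + 2\hat s_i\sum_{k=1}^{l} \tilde\ell_{jk}\hat s_k\Bigr).
\]
The task is to show that $\sumxi^{\xi,SO(3)}[\bn]\,J_{L\sqcup L'}(\bn,\bs)$ transforms under the same slide by the inverse factor $\xi^{\mu(L',\bs)-\mu(\tilde L',\bs)}$, so that $F^{SO(3)}_{L\sqcup L'}(\xi;\bs)$ is preserved.

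The key observation I would exploit is that the uncorrected $SU(2)$-sum $\sumxi^{\xi,SU(2)}[\bn]\,J_{L\sqcup L'}(\bn,\bs)$ is already slide-invariant, because the full $U_q(sl_2)$-category is closed under fusion of arbitrary irreducibles and no $\xi^\mu$ prefactor is needed. The $SO(3)$-sum is the restriction to odd $n_j$; the complementary even-$n_j$ part can be converted to an odd-$n_j'$-sum by applying the symmetry principle (Proposition \ref{lsy}) with $\a$ the $j$-th standard basis vector, since for odd $r$ the map $n_j\mapsto r-n_j$ swaps the parity of $n_j$ (and the range $\{0,\dots,4r-1\}$ folds down via the periodicity \eqref{s1}). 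This expresses the $SO(3)$-sum as a combination of $SU(2)$-sums weighted by the symmetry phase $\xi^t$ of \eqref{ttt}, with $t$ determined by the linking matrix of $L\sqcup L'$. Under the slide, $t$ changes according to the linking-matrix shifts above, together with the shifts $\tilde{\tilde\ell}_{ji}-\tilde\ell_{ji}=\ell_{jj}$, etc.; its quadratic-in-linking piece changes by exactly $-\Delta\mu$, while the $\hat n_j$-linear piece averages to zero under the involution $n_j\mapsto r-n_j$ on the odd summands.

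The hardest part will be the careful bookkeeping of these symmetry-principle phases: one must verify that the shift of $t$, after cancellations on the summation, reduces modulo $r$ to precisely $\mu(\tilde L',\bs)-\mu(L',\bs)$, and that the signs $(-\xi^{r/2})^{\sum a_i}$ together with the linear-in-$\hat n$ exponent cancel between the pre-slide and post-slide reductions. The assumption that $r$ is odd is crucial both for the odd/even swap under $n_j\mapsto r-n_j$ and for turning the auxiliary $\xi^{r/2}$ into a genuine sign; the choice of $\a$ isolating the $L_j$-variable keeps the exponent tractable and makes the matching with the linking-matrix shift a direct comparison.
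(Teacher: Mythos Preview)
Your treatment of the $L$-over-$L$ slide is fine and matches the paper. For the $L'$-over-$L$ slide, however, your route diverges from the paper's and, as written, has a genuine gap.

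The paper's argument is direct and avoids $SU(2)$ altogether: it applies Proposition~\ref{lsy} to the \emph{$L'$-colors} $\bs$, with $\alpha=\hat\bs=(\hat s_1,\dots,\hat s_l)$, to replace $\bs$ by the all-odd tuple $\hat\bs*\bs$. The prefactor $\xi^{\mu(L',\bs)}$ is designed precisely so that this substitution absorbs the symmetry phase (equation~\eqref{2223}). One is then in the standard $SO(3)$ setting with odd colors on $L'$, where slide invariance is already known (equation~\eqref{225}). Applying Proposition~\ref{lsy} once more on the post-slide link converts back and produces $\xi^{\mu(L'',\bs)}$, completing the proof without any linking-matrix bookkeeping.

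Your proposal instead applies the symmetry principle to the single $L$-variable $n_j$ and tries to leverage $SU(2)$ slide invariance. But the $SO(3)$ sum restricts \emph{all} of $n_1,\dots,n_m$ to be odd, not only $n_j$; flipping the parity of $n_j$ alone does not express the $SO(3)$ sum as a combination of $SU(2)$ sums. To genuinely relate the two one would need all $2^m$ parity sectors, each carrying its own phase from Proposition~\ref{lsy}, and the slide-covariance of that whole combination is no easier than the original problem. Your claim that the $\hat n$-linear part of $t$ ``averages to zero under the involution $n_j\mapsto r-n_j$'' is also not what actually happens: on the odd summands the term $\tfrac r2\sum_i\ell_{ji}\hat n_i$ is already $\equiv 0\pmod r$ because each $\hat n_i$ is even, which is a different (and purely pointwise) cancellation. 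Finally, you acknowledge that the phase bookkeeping is the hardest part and do not carry it out; once one tries, the cross-terms involving $\tilde\ell_{ji}$ and $\ell_{jj}$ do not separate cleanly into a single scalar matching $\Delta\mu$ without further identities. The paper's trick of moving the symmetry to the $L'$ side sidesteps all of this.
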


\begin{proof}
The invariance under
sliding of one component of $L$ over another component of $L$
 follows by standard arguments (see e.g.  \cite{Lickorish}).
%Therefore, we restrict here to the case, when
%some of the $s_i$ are even.

%If $\ord(\xi^{1/4}) = r$ then by Proposition 
%\ref{lsy}, $\ex(J_L(\bn)) = \ex(J_L(\a*\bn))$ for arbitrary $\a$ and $\bn$. Furthermore, in this case
%$\xi^{\mu^{(L',\bs)}} = 1$ and hence
%$$
%F^{SO(3)}_{L\sqcup L'}(\xi; \bs) = \frac12 F^{SU(2)}_{L\sqcup L'}(\xi; \bs)\ .
%$$
%Since the right hand side above is invariant under sliding, so is the left hand side.

%Now suppose $\ord(\xi^{1/4}) = 4r$. 
Let $L\sqcup L''$ be the link obtained from $L\sqcup L'$
by sliding a component of $L'$ over a component of $L$. It is enough
 to show that
$$
F^{SO(3)}_{L\sqcup L'}(\xi; \bs) = F^{SO(3)}_{L\sqcup L''}(\xi; \bs)\, .
$$
Using the fact that $\hat s *\hat s* s=s$ for any $s$ with
$\hat s\equiv s-1\pmod 2$
and Proposition \ref{lsy}, we have
\be\label{2223}
 \xi^{\mu(L',\bs)}\;J_{L\sqcup L'}(\bn,\bs)=(-\xi^{r/2})^{\sum_i \hat s_i} \;J_{L\sqcup L'}(\bn, \hat \bs *\bs)\, .\ee
Here we used the fact $\hat s *s$ is always odd, and hence 
all summands of $\sum p_{ij}\hat s_i \widehat {\hat s_j * s_j}$ are even.

By the invariance of $F^{SO(3)}_{L\sqcup L'}(\xi,\bs)$ in the standard case when
all colors are odd, we get
\be\label{225}\sumxi^{\xi, SO(3)}[\bn] J_{L\sqcup L'}(\bn, \hat \bs *\bs)=
\sumxi^{\xi, SO(3)}[\bn] J_{L\sqcup L''}(\bn, \hat \bs *\bs)\, .\ee

Further using Proposition \ref{lsy} again, we obtain
\be\label{221}
J_{L\sqcup L''}(\bn, \hat \bs *\bs)=
(-\xi^{r/2})^{\sum_i \hat s_i}\; \xi^{\mu(L'',\bs)}J_{L\sqcup L''}(\bn, \bs)\, .\ee
Inserting \eqref{2223}, \eqref{221} into \eqref{225} we get the result.
\end{proof}

Lemma \ref{2nd} suggests to define $\tau^{SO(3)}_{M,L'}(\xi; \bs)$ for arbitrary $\bs$ by substituting
  $F^{SO(3)}_{L\sqcup L'}(\xi)$ given by  $\eqref{sta-SO(3)}$
 into  $\eqref{inv}$. When all colors of $L'$ are odd, 
the only additional factor $\xi^{\mu(L',\bs)}$ is 1 and
we get back our old invariant.

\begin{cor}\label{sinv}
$\tau^{SO(3)}_{M,L'}(\xi; \bs)$  is an invariant of the pair $(M,L')$.
\end{cor}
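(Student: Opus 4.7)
The plan is to invoke the relative Kirby theorem: the formula for $\tau^{SO(3)}_{M,L'}(\xi;\bs)$ defines an invariant of $(M,L')$ if and only if the right-hand side of \eqref{inv} is unchanged under the local moves that relate any two surgery presentations of the pair. These moves are (i) isotopy of $L \sqcup L'$ in $S^3$; (ii) a handle slide of a component of $L$ over another component of $L$; (iii) a handle slide of a component of $L'$ over a component of $L$; and (iv) stabilization, that is, adding to $L$ a $\pm 1$-framed unknot split from $L\sqcup L'$.

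Invariance under (i) is immediate, since $J_{L\sqcup L'}$, both linking matrices, and the prefactor $\xi^{\mu(L',\bs)}$ are all isotopy invariants, and $(\beta_+,\beta_-,\beta)$ depends only on the linking matrix of $L$. Invariance under (ii) and (iii) is precisely the content of Lemma \ref{2nd}; these moves do not alter the linking matrix of $L$, so the denominator in \eqref{inv} is unaffected. For (iv), splitness of the added unknot $U^\pm$ yields the factorization
\bas
J_{(L \sqcup U^\pm) \sqcup L'}(\bn, n, \bs) \;=\; q^{\pm (n^2-1)/4}\, [n]\, J_{L\sqcup L'}(\bn, \bs),
\eas
and summing against $[\bn]\,[n]$ therefore factorizes the $SO(3)$ sum as
\bas
F^{SO(3)}_{(L \sqcup U^\pm)\sqcup L'}(\xi;\bs) \;=\; F^{SO(3)}_{U^\pm}(\xi)\cdot F^{SO(3)}_{L\sqcup L'}(\xi;\bs).
\eas
Since stabilization also raises $\beta_\pm$ by one, the new numerator factor cancels against one additional factor of $F^{SO(3)}_{U^\pm}(\xi)$ in the denominator of \eqref{inv}, while $\xi^{\mu(L',\bs)}$ is untouched because it depends only on $L'$ and $\bs$.

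The main obstacle, and the only content that is not either standard or routine, is move (iii) — sliding an evenly colored component of $L'$ over a component of $L$. This is exactly the configuration in which the symmetry principle (Proposition \ref{lsy}) produces a sign and a root-of-unity factor that would otherwise fail to cancel, and the whole purpose of the prefactor $\xi^{\mu(L',\bs)}$ in \eqref{sta-SO(3)} is to compensate for that discrepancy. Once Lemma \ref{2nd} is granted, the corollary follows at once from the splitting argument outlined above.
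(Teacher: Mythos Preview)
Your argument is correct and matches the paper's (implicit) proof, which presents the corollary as an immediate consequence of Lemma~\ref{2nd} together with the standard Kirby-calculus normalization. One minor correction: a handle slide of a component of $L$ over another component of $L$ (your move (ii)) \emph{does} change the linking matrix of $L$; what is preserved is its $\BZ$-congruence class, hence the triple $(\beta_+,\beta_-,\beta)$ and therefore the denominator of~\eqref{inv}.
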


\begin{rmk}\label{dd}
%Because of the factor $\xi^{\mu^{SO(3)}}$ in \eqref{sta-SO(3)}, in general
%$$
For a colored link $L$ in the 3-sphere, our invariant equals to 
$$\tau^{SO(3)}_{S^3,L}(\xi; \bs) = \xi^{-r(r-2)/4\; 
\sum _{i,j} l_{ij}\hat s_i\hat s_j} \ex(J_L(\bs))\, .
$$
Hence if some colors of $L$ are even, this invariant might differ
from the colored Jones polynomial by some factor depending on the
linking matrix $(l_{ij})$ of $L$.

%when $\bs$ is not odd. This also explains the strong symmetry in Proposition \ref{p007} below.
%This also means that the extension of the WRT $SO(3)$ invariant to allow even colored links
% is not very significant.
\end{rmk}

\subsection{Symmetry Principle for the WRT $SO(3)$ invariant}%$\tau^{SO(3)}_{M,L'}(\xi)$}
We use the same notations as in the previous section.

\begin{prop}\label{p007}
For $\a\in \{0,1\}^l$ and $\bs\in (\BZ/r\Z)^l$  one has
\bes
\tau^{SO(3)}_{M,L'}(\xi; \a*\bs) = \left (-\xi^{r/2}\right )^{\sum_i a_i}\,   \tau^{SO(3)}_{M,L'}(\xi; \bs)\ .
\ees
\end{prop}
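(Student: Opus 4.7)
The plan is to invoke the key identity
\bes
\xi^{\mu(L',\bs)}\,\ex\bigl(J_{L\sqcup L'}(\bn,\bs)\bigr) \;=\; (-\xi^{r/2})^{\sum_i\hat s_i}\,\ex\bigl(J_{L\sqcup L'}(\bn,\hat\bs*\bs)\bigr)
\ees
established as \eqref{2223} in the proof of Lemma \ref{2nd}, which holds for arbitrary $\bs$. I would apply it twice, once with input $\bs$ and once with input $\a*\bs$. The decisive structural observation is that the canonical ``odd reflection'' $\bs \mapsto \hat\bs*\bs$ is invariant under the action of $\a*$: for each $i$, a short case analysis on $(a_i, s_i \bmod 2)$, crucially using that $r$ is odd (so $r - s_i$ has parity opposite to $s_i$), yields
\bes
\widehat{a_i*s_i}*(a_i*s_i) \;=\; \hat s_i * s_i.
\ees
Consequently the two instances of the key identity share a common right-hand side as a function of $\bn$, and differ only in their sign prefactors.

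Dividing them gives
\bes
\xi^{\mu(L',\a*\bs)}\,\ex\bigl(J_{L\sqcup L'}(\bn,\a*\bs)\bigr) \;=\; (-\xi^{r/2})^{\sum_i(\widehat{a_i*s_i}-\hat s_i)}\;\xi^{\mu(L',\bs)}\,\ex\bigl(J_{L\sqcup L'}(\bn,\bs)\bigr).
\ees
The quantity $\widehat{a_i*s_i} - \hat s_i$ vanishes when $a_i = 0$ and equals $r - 2s_i \equiv 1 \pmod 2$ when $a_i = 1$ (again using $r$ odd); since $(-\xi^{r/2})^2 = \xi^r = 1$, the sign factor collapses to $(-\xi^{r/2})^{\sum_i a_i}$, which is independent of $\bn$.

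Finally I would multiply by $[\bn] = \prod_i[n_i]$ and sum over $\bn$ in the $SO(3)$ fashion to obtain
\bes
F^{SO(3)}_{L\sqcup L'}(\xi;\a*\bs) \;=\; (-\xi^{r/2})^{\sum_i a_i}\,F^{SO(3)}_{L\sqcup L'}(\xi;\bs).
\ees
Since the normalizing denominator in \eqref{inv} depends only on $L$ and $\xi$, the same relation descends to $\tau^{SO(3)}_{M,L'}$, which proves the proposition. The only genuinely nonroutine step is the invariance $\widehat{\a*\bs}*(\a*\bs) = \hat\bs*\bs$; the rest is bookkeeping. The argument uses $r$ odd in two essential places -- the parity computation and the identity $(-\xi^{r/2})^2 = 1$ -- which is consistent with the $SO(3)$ invariant being defined only for odd $r$.
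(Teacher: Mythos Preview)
Your argument is correct and takes a genuinely different route from the paper's own proof. The paper applies the symmetry principle (Proposition~\ref{lsy}) directly to pass from $J_{L\sqcup L'}(\bn,\a*\bs)$ to $J_{L\sqcup L'}(\bn,\bs)$, obtaining an extra phase $\xi^u$ with $u=\frac{r(r-2)}4\sum p_{ij}a_ia_j+\frac r2\sum p_{ij}a_i\hat s_j$, and then verifies by an explicit modular computation that $u-(\mu(L';\bs)-\mu(L';\a*\bs))\equiv 0\pmod r$. You instead factor through the already-packaged identity \eqref{2223}, and your key structural observation is that the ``odd normalization'' map $\bs\mapsto\hat\bs*\bs$ is invariant under the $\a*$-action (because $r$ is odd), so both $\bs$ and $\a*\bs$ land on the same odd color vector. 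This makes the phase bookkeeping almost automatic: only the sign prefactors survive, and the parity computation $\widehat{a_i*s_i}-\hat s_i\equiv a_i\pmod 2$ together with $(-\xi^{r/2})^2=1$ finishes things. Your approach is more conceptual and avoids the case-by-case congruence check, at the cost of relying on \eqref{2223} (which itself is derived from Proposition~\ref{lsy}); the paper's approach is more self-contained but computational. One cosmetic point: rather than literally ``dividing'' the two instances of \eqref{2223}, it is cleaner to substitute one into the other, which avoids any worry about $J_{L\sqcup L'}(\bn,\bs)$ vanishing.
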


\begin{proof} 
By \ref{lsy} we have
$$
\sumxi^{\xi, SO(3)} [\bn] J_{L\sqcup L'}(\bn, \a*\bs) = (-\xi^{r/2})^{\sum_i a_i} \xi^u \sumxi^{\xi, SO(3)} [\bn]
J_{L\sqcup L'}(\bn, \bs)\ ,
$$
where $u=\frac{r(r-2)}4 \sum_{i,j} p_{ij} a_i a_j+ \frac r2\sum_{i,j} p_{ij}a_i\hat s_j$. 
Here we use the fact that $\bn$ is odd in the above sum.
On the other hand
$$
w := 
\mu(L'; \bs) - \mu(L'; \a*\bs) = \frac{r(r-2)}4 \sum_{i,j} p_{ij}(\widehat{a_i*s_i}\widehat{a_j*s_j} - \hat s_i \hat s_j)\ .
$$
Then
\bas
& \qquad u - w  \equiv \frac{r(r-2)}4\sum_{i} p_{ii} (\hat s_i^2 - \widehat{a_i*s_i}^2 + 2 a_i\hat s_i+ a_i^2) \\
& + \frac r2 \sum_{i<j} p_{ij}(\hat s_i\hat s_j- \widehat{a_i*s_i}\widehat{a_j*s_j} + a_i\hat s_j + a_j\hat s_i +a_ia_j)
 \quad\equiv 0 \mod r\ ,
\end{align*}
which can be verified directly. 
\end{proof}

%{\bf Note that the linking matrix $(p_{ij})$ is not an invariant of $L' \subset M$. Rewrite this, replacing $p_{ij}$ by invariant terms.}

\begin{rmk}
Proposition \ref{p007} is not true for the WRT $SU(2)$ invariant.
For example, consider the Hopf link with framing $2$ on the first component and framing $0$ on the second.
Surgery on the first component produces a pair $(\R P^3,K)$. If $\ord(\xi)=3$ and 
$\ord(\xi^{1/4})=12$ then $\tau^{SU(2)}_{M,K}(1;\xi)=0$
and $\tau^{SU(2)}_{M,K}(1*1=2;\xi)\ne0$.
\end{rmk}

\subsection{Splitting} In \cite{KM} it was proved that when both the $SO(3)$ and $SU(2)$ WRT invariants can be defined, i.e. when $r$ is odd, then one has the splitting
$$ \tau_M^{SU(2)}(\xi) = \tau^{\BZ/2}_M (\xi) \,  \tau_M^{SO(3)}(\xi)\ ,$$
where $\tau^{\BZ/2}_M (\xi)$ is a simple invariant depending only on the linking pairing of $M$. Here we generalize this result for invariants of pairs $L'\subset M$. We will follow the
approach in \cite{Le2}, where the splitting is generalized to all higher ranked
 simple Lie algebras.

Let $s_1,\dots,s_l$ be the colors on $L'$ and set
\be
 F_{L\sqcup L'}^{\BZ/2}(\xi; \bs) = \xi^{\frac{r(r-2)}4 \sum p_{ij} \hat s_i  \hat s_j }\!\!\!\!\!\!\!\!
  \sum_{\al_1,\dots,\al_m \in \{0,1\}} 
\xi^{\frac{r(r-2)}4 \sum \ell_{ij} \al_i \al_j + \frac r2 \sum \ve_i \,  \al_i   }\ ,
 \label{e900}
 \ee
where $(\ell_{ij})$ and $(p_{ij})$ are the linking matrices of $L$ and $L'$ respectively,
and $\ve_i$ is defined by \eqref{3306}.

For example
\be F_{U^{\pm}}^{\BZ/2}(\xi)= 1 + \xi^{\pm \frac{r (r-2)}{4}}\ .
\label{e9007}
\ee

We will assume that  $r=\ord(\xi)$ is odd and $\xi^{1/4}$ is chosen so that $\ord(\xi^{1/4}) \neq 2r$, i.e. $\ord(\xi^{1/4})$ is either $r$ or $4r$. This choice guarantees that 
$F_{U^{\pm}}^{\BZ/2}(\xi) \neq 0$.
Define
\be
\tau^{\BZ/2}_{M,L'} (\xi;\bs) = \frac{ F_{L\sqcup L'}^{\BZ/2}(\xi;\bs) }{\left( F_{U^{+}}^{\BZ/2}(\xi)\right)^{\beta_+} \, \left( F_{U^{-}}^{\BZ/2}(\xi)\right)^{\beta_-}\, \left | F_{U^{+}}^{\BZ/2}(\xi)\right|^{\beta}}\ .
\label{e901}
\ee
 Then $\tau^{\BZ/2}_{M,L'} (\xi;\bs)$ is an invariant of the pair $(M,L')$.
 
\begin{rmk}
This type of
invariants were studied in \cite{MOO, De1} for 3-manifolds without links inside, and in \cite{De2}
for 3-manifolds with links inside.
When the abelian group is $\Z/2\Z$,  set the parameters $c_i$  in \cite{De2}
to be equal to $s_i-1 \mod 2$, and define
the quadratic form $q$ on $\Z/2\Z$
as follows: $q(0)=0$, $q(1)=(r-2)/4$, then the invariant introduced in \cite{De2}
 is equal to $\tau^{\BZ/2}_{M,L'} (\xi; \bs)$ after setting  $\xi^{r/4} = \sqrt{-1}$. 
\end{rmk}

%We introduce the following notations. 
%For every integer $n$ let $\hat n \equiv n-1 \mod 2$. So when $r$ is odd, $\hat n * n$ is always odd. 

\begin{prop}\label{sp} Suppose $r=\ord(\xi)$ is odd, and $\ord(\xi^{1/4})$ is either $r$ or $4r$. 
%Assume the same notations as above. 

(a) One has the splitting
$$ \tau_{M,L'}^{SU(2)}(\xi; \bs) = 
\tau^{\BZ/2}_{M,L'} (\xi; \bs) \,  \tau_{M, L'} ^{SO(3)}(\xi; \bs)\, .$$
%where $s_i$'s are the colors of $L'$.

(b)  If $\ord(\xi^{1/4})=r$, then $\tau^{\BZ/2}_{M,L'} (\xi; \bs)=1$ and $$\tau_{M,L'}^{SU(2)}(\xi; \bs) =
\tau_{M, L'} ^{SO(3)}(\xi; \bs)\ .$$

(c)  One has the integrality $$\tau^{\BZ/2}_{M,L'} (\xi; \bs)\in  \Z[\xi^{1/4},e_8]\ .$$
\end{prop}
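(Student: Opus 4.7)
My plan splits into three parts matching (a), (b), (c); the whole argument is bookkeeping driven by the symmetry principle.

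For \textbf{(a)}, starting from $F^{SU(2)}_{L\sqcup L'}(\xi)$ I first use the $2r$-periodicity \eqref{s1} to reduce each summation index from $\{0,\ldots,4r-1\}$ to $\{0,\ldots,2r-1\}$, absorbing a factor $2^m$. Since $r$ is odd, the map $(\alpha,\mathbf{m})\mapsto \alpha*\mathbf{m}\pmod{2r}$ is a bijection from $\{0,1\}^m\times\{\mathbf{m}\in\{0,\ldots,2r-1\}^m:m_i\text{ odd}\}$ onto $\{0,\ldots,2r-1\}^m$. Reparametrizing along this bijection, I apply Proposition \ref{lsy} to $J_{L\sqcup L'}(\alpha*\mathbf{m},\bs)$ together with the identity $[\alpha*\mathbf{m}]=(-\xi^{r/2})^{\sum\alpha_i}[\mathbf{m}]$, which follows at $\xi$ from $[r-n]=-\xi^{r/2}[n]$. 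The two copies of $(-\xi^{r/2})^{\sum\alpha_i}$ square to $1$. In the exponent $t$ of \eqref{ttt}, the term $\tfrac{r}{2}\sum\ell_{ij}\alpha_i\hat m_j$ vanishes because $\hat m_j=m_j-1$ is even, while the cross term $\tfrac{r}{2}\sum\tilde\ell_{ij}\alpha_i\hat s_j$ reduces modulo $r$ to $\tfrac{r}{2}\sum\ve_i\alpha_i$ by the definition \eqref{3306} of $\ve_i$ and the identity $\xi^r=1$. What survives of $t$ is $\tfrac{r(r-2)}{4}\sum\ell_{ij}\alpha_i\alpha_j$. Observing that the leading factor $\xi^{\mu(L',\bs)}$ of $F^{SO(3)}$ in \eqref{sta-SO(3)} cancels the factor $\xi^{-\mu(L',\bs)}=\xi^{\frac{r(r-2)}{4}\sum p_{ij}\hat s_i\hat s_j}$ built into $F^{\Z/2}$ in \eqref{e900}, the computation factorizes as $F^{SU(2)}_{L\sqcup L'}(\xi)=F^{SO(3)}_{L\sqcup L'}(\xi;\bs)\cdot F^{\Z/2}_{L\sqcup L'}(\xi;\bs)$. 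The identical argument applied to $U^\pm$ with empty $L'$ gives $F^{SU(2)}_{U^\pm}=F^{SO(3)}_{U^\pm}\cdot F^{\Z/2}_{U^\pm}$, hence $\cD^{SU(2)}=\cD^{SO(3)}\cdot|F^{\Z/2}_{U^+}|$; substituting into \eqref{inv} yields the splitting.

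For \textbf{(b)}, assume $\ord(\xi^{1/4})=r$. Then $\xi^{r/2}=(\xi^{1/4})^{2r}=1$ and $\xi^{r(r-2)/4}=(\xi^{1/4})^{r(r-2)}=1$, since $r\mid r(r-2)$. Every exponential in \eqref{e900} collapses to $1$, so $F^{\Z/2}_{L\sqcup L'}(\xi;\bs)=2^m$ and $F^{\Z/2}_{U^\pm}(\xi)=|F^{\Z/2}_{U^+}(\xi)|=2$; hence $\tau^{\Z/2}_{M,L'}(\xi;\bs)=2^m/2^{\beta_++\beta_-+\beta}=1$, and combined with (a) this gives $\tau^{SU(2)}_{M,L'}=\tau^{SO(3)}_{M,L'}$.

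For \textbf{(c)}, the case $\ord(\xi^{1/4})=r$ is trivial by (b). In the remaining case $\ord(\xi^{1/4})=4r$, set $\iota:=\xi^{r(r-2)/4}$: this is a primitive $4$th root of unity, and $\xi^{r/2}=-1$. Then $F^{\Z/2}_{U^\pm}(\xi)=1+\iota^{\pm 1}\in\{1+i,1-i\}$, whose product equals $2$, so $|F^{\Z/2}_{U^+}|=\sqrt 2=e_8+e_8^{-1}\in\Z[\xi^{1/4},e_8]$. Since $1+i$ and $1-i$ are associates in $\Z[i]$ and $\sqrt 2=e_8^{-1}(1+i)$ in $\Z[\xi^{1/4},e_8]$, the denominator $(F^{\Z/2}_{U^+})^{\beta_+}(F^{\Z/2}_{U^-})^{\beta_-}|F^{\Z/2}_{U^+}|^\beta$ generates the ideal $((1+i)^m)$ in $\Z[\xi^{1/4},e_8]$. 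Integrality of $\tau^{\Z/2}$ therefore reduces to the divisibility $(1+i)^m\mid F^{\Z/2}_{L\sqcup L'}(\xi;\bs)$ in $\Z[i]$. Stripping off the unit $\iota^{\sum p_{ij}\hat s_i\hat s_j}$, this is the claim that the Gauss sum
\[
\sum_{\alpha\in(\Z/2\Z)^m}\iota^{\sum\ell_{ij}\alpha_i\alpha_j}(-1)^{\sum\ve_i\alpha_i}
\]
lies in $(1+i)^m\Z[i]$, which is the standard divisibility for quadratic Gauss sums on $(\Z/2\Z)^m$. It follows, for instance, by diagonalizing $Q(\alpha)=\sum\ell_{ij}\alpha_i\alpha_j\bmod 4$ into orthogonal one- and two-dimensional blocks over $\Z/2$, each block contributing a factor lying in $(1+i)^{\dim}\Z[i]$; equivalently this quantity is Deloup's abelian linking-pairing invariant of the Remark above, which is known \cite{De1,De2} to be an algebraic integer.

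The main obstacle is \textbf{(c)}: parts (a) and (b) are formal consequences of Proposition \ref{lsy}, whereas (c) requires identifying the precise Gauss-sum divisibility in $\Z[i]$ and matching it against the ideal generated by the denominator in $\Z[\xi^{1/4},e_8]$.
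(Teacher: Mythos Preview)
Your proofs of (a) and (b) are correct and match the paper's argument: both use Proposition~\ref{lsy} to rewrite the $SU(2)$ sum over all colors as a double sum over odd colors and $\alpha\in\{0,1\}^m$, with the quantum-integer factor killing the $(-\xi^{r/2})^{\sum\alpha_i}$ and the cross term in $t$ reducing to $\tfrac r2\sum\ve_i\alpha_i$.

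For (c) your approach is correct but differs from the paper's. You argue abstractly that the Gauss sum $\sum_{\alpha\in(\Z/2)^m}\iota^{Q(\alpha)}(-1)^{\sum\ve_i\alpha_i}$ lies in $(1+i)^m\Z[i]$, appealing to block-diagonalization of $\Z/4$-valued quadratic forms on $(\Z/2)^m$ or to Deloup's integrality results \cite{De1,De2}. The paper instead invokes a concrete fact from \cite[p.~522]{KM}: by handle slides one may assume $\ell_{ij}\equiv 0\pmod 2$ for $i\neq j$, after which the cross terms in the exponent vanish and the sum factors as $\prod_i(1+\xi^{\frac{r(r-2)}{4}\ell_{ii}+\frac r2\ve_i})$, each factor visibly divisible by $\sqrt 2$ since $\xi^{\frac{r(r-2)}{4}\ell_{ii}+\frac r2\ve_i}$ is a fourth root of unity. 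Your route is legitimate and perhaps more conceptual, but the diagonalization step you sketch (``orthogonal one- and two-dimensional blocks'') deserves either a precise reference or the short inductive argument; the paper's route is more self-contained once the \cite{KM} reduction is granted, and makes the factor-by-factor divisibility explicit.
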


\begin{proof} 
(a) Recall that $\tilde \ell_{ij}$ is the linking
number between the $i$-th component of $L$ and the $j$-th component of $L'$. 
Also note that
$\forall \a\in (\Z/2\Z)^m$, $\a*(\a*\bn)=\bn$.
By Proposition \ref{lsy} we have
$$
 \ex ([\bn] J_{L\sqcup L'}(\bn, \bs))
= \xi^t 
\ex ([\a * \bn]J_{L\sqcup L'}(\a * \bn, \bs)) \ ,
$$
where $t=\frac{r(r-2)}4 \sum \ell_{ij} a_i a_j + \frac r2 \sum \tilde \ell_{ij} a_i \hat s_j$. 
Note that the factor $(-\xi^{r/2})^{\sum a_i}$ is 
missing because of the quantum integers. Therefore by \eqref{sta}, \eqref{sta-SO(3)} and \eqref{e901} we have
\bas
F_{L\sqcup L'}^{SU(2)}(\xi; \bs) & = \xi^{\frac{r(r-2)}4
\sum p_{ij}\hat s_i\hat s_j} \sum_{a_i \in\cb{0,1}} \xi^{\frac{r(r-2)}4\sum \ell_{ij} a_i a_j
+ \frac r2\sum 
\tilde \ell_{ij} a_i \hat s_j} \ F_{L\sqcup L'}^{SO(3)}(\xi; \bs) \\
& = F_{L\sqcup L'}^{\Z/2}(\xi; \bs)\ F_{L\sqcup L'}^{SO(3)}(\xi; \bs)\ , 
\end{align*}
which implies (a).

(b) If $\ord(\xi^{1/4})=r$, then by \eqref{e900}, $F_{L\sqcup L'}^{\BZ/2}(\xi; \bs)= 2^m$. In particular, $F_{U^{\pm}}^{\BZ/2}(\xi; \bs)=2$. It follows that $\tau^{\BZ/2}_{M,L'} (\xi; \bs)=1$.

(c) The case $\ord(\xi^{1/4})=r$ was covered by (b). Assume that $\ord(\xi^{1/4})= 4r$. Then from \eqref{e9007} it follows that $ F_{U^{\pm}}^{\BZ/2}(\xi) \sim \sqrt 2$.
Hence the denominator of \eqref{e901} is $\sim (\sqrt 2)^m$.

According to \cite[p. 522]{KM}, 
we may
assume $\ell_{ij}\equiv 0 \mod 2$ if $i\ne j$. Since $\ell_{ij} \al_i\al_j$ appears twice in the exponent in \eqref{e900} if $i\ne j$, we can write
\bas
F_{L\sqcup L'}^{\BZ/2}(\xi; \bs) & \sim  \prod_{i=1}^m \left(
\sum_{\al_i \in \{0,1\}} \xi^{\frac14 r(r-2) \ell_{ii} \al_i^2 + \frac r2 \ve_i \,  \al_i   }
\right)\\
& = \prod_{i=1}^m \left(  1+  \xi^{\frac14 r(r-2) \ell_{ii}  + \frac r2 \ve_i \,    }   \right)\ .
\end{align*}
Since
$ \xi^{\frac14 r(r-2) \sum \ell_{ii}  + \frac r2 \sum \ve_i \,    }$ is a 4-th root of unity, % i.e.  one  of $\{1,I,-1,-I\}$, 
each factor in the above product is
either 2, 0, or $\sim \sqrt 2$, and hence is divisible by $\sqrt 2$. This means $F_{L\sqcup L'}^{\BZ/2}(\xi; \bs)$, 
the numerator of \eqref{e901}, is divisible by $(\sqrt 2)^m$, and the statement follows.
\end{proof}

\section{Diagonalization of 3-manifolds}\label{secd} We recall and refine some well-known facts about diagonalization of 3-manifolds. The first diagonalization result was obtained in
\cite{Ohtsuki} and was further developed in \cite{Le4,bl,bbl1}.

A 3-manifold is said to be {\em diagonal of prime type}  if it
can be obtained by surgery along a framed link $L \subset S^3$ with diagonal linking matrix $\diag(b_1,\dots,b_m)$ such that $b_i= \pm p_i^{e_i}$, where each $p_i$ is a prime, 1, or 0.
Denote by $\L(b,a)$ the lens space obtained from $S^3$ by surgery on the unknot with framing $b/a$. Also $M \#M'$ is the connected sum of $M$ and $M'$ and
$M^{\#s}$ is the connected sum of $s$ copies of $M$.

\begin{prop} For every  3-manifold $M$, there exists a 3-manifold $N$ of the form
$$ N = \L(2^{k_1},-1) \# \cdots \# \L(2^{k_j},-1)\ ,$$
such that for every positive integer $s$, $M^{\# 2s}\# N $ is diagonal of prime type.
\label{p402}
\end{prop}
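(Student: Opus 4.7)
My strategy is to reduce the geometric diagonalization question to a classification question about linking pairings on finite abelian groups, to treat the odd-primary and $2$-primary parts separately, and to invoke the doubling result \cite[Theorem 2]{bbl2} (which is explicitly cited in the introduction as one of the inputs used without proof) to resolve the non-diagonalizable $2$-primary summands.

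First, I would represent $M$ as surgery on a framed link $L\subset S^3$ with linking matrix $A$. Then $M^{\#2s}\#N$ has an obvious surgery description whose linking matrix is the block-diagonal sum of $2s$ copies of $A$ together with a surgery matrix for $N$. Being diagonal of prime type is equivalent to requiring that after a sequence of Kirby moves (handle slides, orientation reversals on components, and blowups by $\pm1$-framed unknots, all of which preserve the surgered $3$-manifold) this combined matrix can be brought into the form $\diag(\pm p_1^{e_1},\dots,\pm p_m^{e_m})$ with each $p_i$ prime, $1$, or $0$. At the level of the linking pairing on the torsion part of $H_1$, this amounts to showing that the linking pairing of $M^{\#2s}\#N$ is an orthogonal sum of cyclic linking forms of prime-power order.

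Next I would decompose the linking pairing into $p$-primary parts. For every odd prime $p$ the $p$-primary summand of any linking pairing is automatically an orthogonal sum of cyclic pieces, and hence already of the desired prime-power diagonal type, so no auxiliary manifold is needed there. The entire difficulty sits in the $2$-primary part, where by Kawauchi--Kojima \cite{KK} there exist indecomposable pairings $E^{k}_{0}$ and $E^{k}_{1}$ on $(\Z/2^{k})^{2}$ that are not orthogonal sums of cyclic pairings. The crucial observation is that, after passing to an even number of orthogonal copies, $E^{k}_{j}\oplus E^{k}_{j}$ does split as a sum of cyclic $2$-primary linking pairings, possibly after adding finitely many cyclic summands drawn from a universal list determined by the $2$-primary invariants of $M$ alone; this is precisely the content of \cite[Theorem 2]{bbl2}. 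I would therefore take $N$ to be the connected sum of lens spaces $\L(2^{k_i},-1)$ realizing exactly the cyclic summands that \cite[Theorem 2]{bbl2} requires one to add. Because these are determined by the $2$-primary invariants of $M$ only and not by $s$, a single choice of $N$ works uniformly for all $s\ge 1$: every additional pair of copies of $M$ simply contributes another even number of copies of the indecomposable $2$-primary blocks, which are handled by the same mechanism.

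Finally I would pass from the algebraic decomposition of the linking pairing back to the desired surgery presentation. Standard arguments, essentially already used in \cite{bl,bbl1} in the same setting, show that once the linking pairing of a $3$-manifold is an orthogonal sum of cyclic linking forms whose orders are prime powers, the manifold admits a surgery description along a framed link in $S^3$ whose linking matrix is literally diagonal with the corresponding prime-power entries (together with some $\pm 1$'s and $0$'s accounting for the free part and for stabilization). The main obstacle throughout is the $2$-primary case, specifically the behaviour of $E^{k}_{0}\oplus E^{k}_{0}$ and $E^{k}_{1}\oplus E^{k}_{1}$; everything else is either classical or a routine Kirby-calculus manipulation, and it is \cite[Theorem 2]{bbl2} that encapsulates this hard step.
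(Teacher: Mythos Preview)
Your overall strategy---reduce to linking pairings, split off the odd-primary part trivially, handle the $2$-primary obstruction via the Kawauchi--Kojima classification, then lift back to a surgery presentation---is exactly the paper's. But two points are off and one of them is a real gap.

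First, the citation. The result \cite[Theorem 2]{bbl2} has nothing to do with linking pairings; it is a computation of $H^{SU(2)}(k,\pm 2,0)$ used later in the $SU(2)$ integrality argument. The linking-pairing inputs actually used here are \cite[Section~2.2]{bl}, which shows that $\phi(M\#M)=\phi_B\oplus\bigoplus_i E_0^{k_i}$ with $B$ diagonal of prime-power type (so doubling $M$ already kills the $E_1^{k}$ summands and leaves only $E_0^{k}$'s), and the single relation from \cite{KK}
\[
E_0^{k}\oplus\phi_{(-2^{k})}\;\cong\;\phi_{(-2^{k})}\oplus\phi_{(2^{k})}\oplus\phi_{(2^{k})}.
\]
The passage from a diagonal linking pairing back to a diagonal surgery presentation is Proposition~\ref{p401}, which comes from \cite[Section~3.5]{Le4}, not from \cite{bl,bbl1}.

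Second, and more substantively, your explanation of why a \emph{single} $N$ works for every $s$ does not go through as written. You say that each additional pair of copies of $M$ contributes more indecomposable blocks ``handled by the same mechanism,'' but if the mechanism were simply ``add cyclic summands to split $E_j^{k}\oplus E_j^{k}$,'' then more copies of $M$ would require more summands and hence a larger $N$ depending on $s$. The point you are missing is that the \cite{KK} identity above is \emph{catalytic}: $\phi_{(-2^{k})}$ reappears on the right-hand side. Iterating gives
\[
s\,E_0^{k}\oplus\phi_{(-2^{k})}\;\cong\;\phi_{(-2^{k})}\oplus 2s\,\phi_{(2^{k})}
\]
for every $s\ge 1$, so one copy of $\phi_{(-2^{k})}$---hence one lens space $\L(2^{k},-1)$ in $N$---suffices to diagonalize arbitrarily many copies of $E_0^{k}$. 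That is the mechanism that makes $N$ independent of $s$, and it should be stated explicitly.
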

To prepare for the proof we recall some well-known facts about linking pairing.
{\em A linking pairing} on a finite abelian group $G$ is a non-singular symmetric bilinear map
from $G\times G$ to $\BQ/\BZ$. Two linking pairings $\nu, \nu'$  on respectively $G,G'$ are {\em isomorphic} if there is an isomorphism
between $G$ and $G'$ carrying $\nu$ to $\nu'$. With the obvious block sum, the set of equivalence classes of linking
pairings is a semigroup.

Any non-singular $n \times n$ symmetric matrix $B$ with integer entries gives rise to a linking
pairing $\phi_B$ on $\BZ^n /B \BZ^n$ by
$\phi_B(x,x') =  x^t B^{-1} x' \in \BQ/\BZ$, where $x, x' \in \BZ^n$ and $x^t$ is the transpose of $x$. A linking pairing is {\em diagonal of type $B$} if it is isomorphic to $\phi_B$, where
$B$ is a non-singular $n \times n$ diagonal matrix  with integer entries.

{\em An enhancement} of an $n \times n$ symmetric matrix $B$ is any matrix of the form $B \oplus D$, where $D$ is a diagonal matrix with entries 0 or $\pm 1$ on the diagonal.

For any closed oriented 3-manifold $M$, there is a
linking pairing $\phi(M)$ on the torsion subgroup of $H_1(M,\BZ)$ defined by
the Poincare duality, see \cite{KK}. For example, if $b\neq 0$ is an integer, then the lens space $\L(b,1)$ has linking pairing $\phi_{(b)}$, and $\L(b,-1)$ has linking pairing $\phi_{(-b)}$.
Here $(b)$ is the $1\times 1$ matrix with entry $b$.

It is clear that $\phi({M\#M'})= \phi(M) \oplus \phi(M')$. The result of \cite[Section 3.5]{Le4} shows the following.

\begin{prop} \label{p401}
If the linking pairing $\phi(M)$ on the torsion subgroup of $H_1(M,\BZ)$ is diagonal of type $B$, then $M$ can be obtained from $S^3$ by surgery along an oriented framed link whose linking matrix is
an enhancement of $B$.
\end{prop}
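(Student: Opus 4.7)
The plan is to produce $N$ so that the torsion linking pairing $\phi(M^{\#2s}\#N)=2s\,\phi(M)\oplus\phi(N)$ is isomorphic to $\phi_B$ for a diagonal integer matrix $B=\diag(b_1,\dots,b_n)$ whose entries are all signed prime powers, and then invoke Proposition \ref{p401}. Once that diagonalization is achieved, Proposition \ref{p401} provides a surgery presentation of $M^{\#2s}\#N$ whose linking matrix is an enhancement of $B$; since an enhancement appends only diagonal entries in $\{0,\pm 1\}$, the resulting matrix still has every diagonal entry in $\{\pm p^e,\pm 1,0\}$, and $M^{\#2s}\#N$ is diagonal of prime type, as required.

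First I would decompose $\phi(M)$ into $p$-primary components via the Kawauchi--Kojima classification. For each odd prime $p$ the $p$-primary part is an orthogonal sum of cyclic pairings $\langle u/p^k\rangle$, and because $(\BZ/p^k)^\times$ has exactly two square classes, each summand is isomorphic to $\langle \pm 1/p^k\rangle=\phi_{(\pm p^k)}$, hence is already a $1\times 1$ block of the desired form. This odd-primary part contributes nothing to the choice of $N$, regardless of $s$.

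The subtle case is the $2$-primary component, which in addition to cyclic summands $\langle u/2^k\rangle$ (with $u$ in up to four square classes mod $2^k$ when $k\ge 3$) may contain the indecomposable blocks $E_0^k$ and $E_1^k$ on $(\BZ/2^k)^2$. Two families of identities do the work. The first are Wall-type relations of the shape
\[
E_j^k \oplus \langle -1/2^k\rangle \;\cong\; \langle -1/2^k\rangle \oplus \langle \pm 1/2^k\rangle \oplus \langle \pm 1/2^k\rangle,
\]
which reduce each $E$-block, after stabilization by a single $\langle -1/2^k\rangle$-summand, to standard cyclic pieces. The second are the doubling identities: $E_j^k\oplus E_j^k$ is diagonal and $\langle u/2^k\rangle\oplus\langle u/2^k\rangle\cong \langle 1/2^k\rangle\oplus\langle u^2/2^k\rangle$ (with $u^2\equiv 1\pmod 8$ for $k\ge 3$), which absorb pairs of non-standard summands into standard ones with no outside help. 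Taking $N$ to be a connected sum of one $\L(2^{k},-1)$ for each $E$-block and each unpaired non-standard cyclic summand in $\phi(M)$, the pairing $2\,\phi(M)\oplus\phi(N)$ is already diagonal of the required form, and the residual $(2s-2)\,\phi(M)$ contains an even number of each offending summand and therefore diagonalizes by the doubling identities alone. Hence the same $N$ serves uniformly for every $s\ge 1$.

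The main obstacle is this $2$-primary bookkeeping: one must verify that every offending summand of $\phi(M)$ can actually be killed by $\langle -1/2^k\rangle$-pieces, which is the only signature available from $\L(2^k,-1)$, rather than by a mix of signs; and that finitely many such pieces, chosen \emph{before} $s$ is specified, really do suffice uniformly in $s$. Both follow from Kawauchi--Kojima's analysis, possibly after slightly enlarging some of the exponents $k$ and adding a few auxiliary summands to $N$. With this algebra in hand, Proposition \ref{p401} finishes the proof.
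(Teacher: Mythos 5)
Your proposal does not prove the statement in question. The statement is Proposition \ref{p401}: \emph{given} that the linking pairing $\phi(M)$ is already diagonal of type $B$, produce a surgery presentation of $M$ whose linking matrix is an enhancement of $B$. What you have written is instead an argument for Proposition \ref{p402} --- the construction of the auxiliary manifold $N$ and the Kawauchi--Kojima bookkeeping needed to make $\phi(M^{\#2s}\#N)$ diagonal of prime type --- and in your first paragraph you explicitly ``invoke Proposition \ref{p401}'' to finish. So with respect to the actual target the argument is circular: you assume the very proposition you were asked to prove. The $2$-primary analysis of $E_0^k$, $E_1^k$ and the square classes of units, while relevant to the paper, belongs to the proof of Proposition \ref{p402} (where the paper carries it out using the identity \eqref{e339}), not here.

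The content of Proposition \ref{p401} is of a different nature: it is the passage from an \emph{algebraic} isomorphism of linking pairings to a \emph{topological} surgery presentation, and the paper deliberately quotes it from \cite[Section 3.5]{Le4} rather than reproving it. A correct proof has to start from an arbitrary surgery presentation of $M$ with some linking matrix $A$, and use two inputs that are entirely absent from your proposal: (i) a stable classification theorem for integral symmetric bilinear forms (Kawauchi--Kojima/Wall/Durfee), asserting that two nondegenerate symmetric integer matrices with isomorphic linking pairings become congruent over $\BZ$ after block-adding diagonal $\pm 1$ entries and matching signatures --- this is where the word ``enhancement'' (extra diagonal entries $0,\pm 1$) comes from, with the $0$'s accounting for the free part of $H_1(M,\BZ)$; and (ii) the fact that integral congruence of linking matrices and stabilization by $\pm 1$-framed unknots can be realized by handle slides and blow-ups on the surgery link without changing the resulting $3$-manifold. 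Without these two steps there is no bridge from ``$\phi(M)\cong\phi_B$'' to ``$M$ has a surgery presentation with linking matrix an enhancement of $B$,'' so the proposal as written has a genuine gap even setting aside the circularity.
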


\begin{proof}[Proof of Proposition \ref{p402}]
In \cite[Section 2.2]{bl} it was noticed that $\phi(M\#M)$ is almost diagonal. More precisely,
\be \phi({M\#M})= \phi_B \oplus \nu,
\label{e400}
\ee
where $B$ is a diagonal matrix whose diagonal entries are prime powers and $\nu$ has the form
$$ \nu =\bigoplus_{i=1}^j E_0^{k_i}.
$$
Here $E_0^k$ is a certain linking form on $\BZ/2^k \times \BZ/2^k$. We don't need
the exact description of $E_0^k$. For us it is important that (see \cite{KK})
\be
E_0^k \oplus \phi_{(-2^k)}= \phi_{(-2^k)} \oplus \phi_{(2^k)} \oplus \phi_{(2^k)}.
\label{e339}
\ee
Note that there is still one $\phi_{(-2^k)}$ in the right hand side of \eqref{e339}. From \eqref{e339} and \eqref{e400},

\be \phi(N \#(M\#M)^{\#s})= s\, \phi_B \oplus \bigoplus_{i=1}^j 
\left( \phi_{(-2^{k_i})}
\oplus 2s\,  \phi_{(2^{k_i})}\right)= \phi_{B'},\ee
 where $B'$ is a diagonal matrix with diagonal entries of the form $\pm p^m$ with prime $p$. By Proposition \ref{p401}, $N \#(M\#M)^{\#s}$ is diagonal of prime type.
 This completes the proof of Proposition \ref{p402}.
\end{proof}

\section{Proof of the integrality in the $SO(3)$ case}
Throughout this section $G=SO(3)$ and $\xi$ is a root of unity of odd
order $r$. %Then $\cD^{SO(3)}\in Z[\xi^{1/4}]$.

\begin{prop}\label{lem}
For  integer $0 \le k \le (r-3)/2$, arbitrary integer $b$, and $\ve\in \{ 0,1\}$,
 \be \frac{H^{SO(3)}(k, b,\ve)}{H^{SO(3)}(0, \pm1,0)}\in\Z[\xi^{1/4}, e_8]\ ,
 \label{e5520}
 \ee
 and
 \be
 \frac{H^{SO(3)}(k, 0,\ve)}{(1-\xi) \, \cD^{SO(3)}}\in\Z[\xi^{1/4}, e_8]\ . \label{e5521}
 \ee
\end{prop}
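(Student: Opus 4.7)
The plan is to combine Lemma \ref{h1}(a), which expresses $H^{SO(3)}(k,b,\ve)$ as $\frac{1}{x_{2k+1+\ve}}$ times a sum involving an element $f_\ve\in I_{2k+1+\ve}$, with the divisibility statement of Proposition \ref{div1}. Since Lemma \ref{h1}(c) gives $(1-\xi)\cD^{SO(3)}\sim H^{SO(3)}(0,\pm 1,0)$, the statement \eqref{e5521} is just the $b=0$ case of \eqref{e5520}; moreover, Lemma \ref{h1}(b) together with Lemma \ref{ab}(e) yields $H^{SO(3)}(0,\pm 1,0)\sim\sqrt{r}\sim O_\xi$. Hence I will reduce everything to proving the single assertion $H^{SO(3)}(k,b,\ve)\in O_\xi\,\Z[\xi^{1/4},e_8]$, which implies both \eqref{e5520} and \eqref{e5521}.

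By Lemma \ref{h1}(a), up to a unit in $\Z[\xi^{1/4},e_8]$,
\bes
H^{SO(3)}(k,b,\ve)\;\sim\;\frac{1}{2\,x_{2k+1+\ve}}\sum_{\substack{n=0\\ n\text{ odd}}}^{4r-1}\xi^{b(n^2-1)/4-3\ve n/2}\,f_\ve(\xi^n,\xi),
\ees
with $f_\ve(z,q)=z^{-k}(q^{-k}z;q)_{2k+1+\ve}\in I_{2k+1+\ve}$. First I would substitute $n=2l+1$---a bijection on odd $n\in\{0,\dots,4r-1\}$---which turns $b(n^2-1)/4$ into $bl(l+1)$ and extracts the unit factor $\xi^{-3\ve/2}=(\xi^{1/4})^{-6\ve}$. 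Since $\xi^r=1$, the resulting summand depends only on $l\bmod r$, so the sum over $l\in\{0,\dots,2r-1\}$ equals twice the sum over $l\in\{0,\dots,r-1\}$, which absorbs the $\frac12$ prefactor. Next, using that $r$ is odd so that $2$ is invertible modulo $r$, I would choose an integer lift $2^*$ of $2^{-1}\bmod r$ and substitute $n\equiv 2l+1\pmod r$; this is a bijection of $\Z/r\Z$, the exponent becomes the integral quadratic form $\tilde Q(n)=b(2^*)^2(n-1)^2+(b-3\ve)\,2^*(n-1)$, and $f_\ve(\xi^{2l+1},\xi)=f_\ve(\xi^n,\xi)$. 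The result is
\bes
H^{SO(3)}(k,b,\ve)\;\sim\;\frac{1}{x_{2k+1+\ve}}\sum_{n=0}^{r-1}\xi^{\tilde Q(n)}\,f_\ve(\xi^n,\xi).
\ees

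Since the hypothesis $k\le(r-3)/2$ forces $2k+1+\ve<r$, Proposition \ref{div1} applies and places the sum above in $x_{2k+1+\ve}\,O_\xi\,\Z[\xi]$, so $H^{SO(3)}(k,b,\ve)\in O_\xi\,\Z[\xi^{1/4},e_8]$, as required. The main obstacle, though essentially one of bookkeeping, is to verify that each change of variable produces only units of $\Z[\xi^{1/4},e_8]$---in particular that the doubling from $l\mapsto l+r$ exactly cancels the $\frac12$ coming from the $\frac14$ in the SO(3) normalization and the factor $2$ in Lemma \ref{h1}(a)---and that the composite substitution $l\equiv 2^*(n-1)\pmod r$ really yields an integral quadratic form in $n$ to which Proposition \ref{div1} literally applies, rather than an expression depending on non-integral data.
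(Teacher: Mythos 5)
Your proposal is correct and follows essentially the same route as the paper: reduce \eqref{e5521} to the $b=0$ case via Lemma \ref{h1}(c), identify $H^{SO(3)}(0,\pm1,0)\sim O_\xi$ via Lemmas \ref{h1}(b) and \ref{ab}(e), and show the numerator from Lemma \ref{h1}(a) lies in $x_{2k+1+\ve}O_\xi\,\Z[\xi]$ by Proposition \ref{div1}. The only difference is bookkeeping: you substitute $n=2l+1$ and then invert $2$ mod $r$ to produce the integral quadratic form, whereas the paper rewrites the exponents directly with $4^*,2^*$ and folds the odd-$n$ range to $\{0,\dots,r-1\}$; both are valid.
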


\begin{proof} First note that by Lemmas \ref{ab} (e) and \ref{h1} (b), $O_\xi\sim H^{SO(3)}(0,\pm1,0)$.
By Lemma \ref{h1} (a), there is $f_\ve(z,q) \in I_{2k+1+\ve}\subset \BZ[z^{\pm 1}, q^{\pm 1}]$ such that
\be
\frac{H^{SO(3)}(k, b,\ve)}{H^{SO(3)}(0, \pm1,0)} \sim \frac{2 \sumx^{\xi,SO(3)} q^{\frac{ b (n^2-1)}4} q^{\frac{-3\ve n}2} f_\ve(q^n,q)}{x_{2k+1+\ve} O_\xi}\ .
\label{e115}
\ee

Since $r$ is odd, $(n^2-1)/4$ and $(1-n)/2$ are integers, and there are integers $2^*, 4^*$ such that $2^* \,  2 \equiv 4^* \, 4 \equiv 1 \pmod r$. We then have
$\xi^{(n^2-1)/4}= \xi^{4^*(n^2-1)}$, $\xi^{-3n/2} = \xi^{-3/2} \xi^{3(1-n)\, 2^*}$.

The numerator of \eqref{e115} is
\begin{align}
%\label{rei}
2 \sumx^{\xi,SO(3)} q^{\frac{ b (n^2-1)}4} q^{\frac{-3\ve n}2} f(q^n,q) & =  \xi^{-3\ve/2} \sum_{\substack{n = 0 \\ n \text{ odd}}}^{2r-1}
 \xi^{4^* { b (n^2-1)}+ 3\ve  2^* (1-n)}\, f_\ve(\xi^n,\xi) \notag \\
& =\xi^{-3\ve/2} \sum_{n = 0 }^{r-1}
 \xi^{4^* { b (n^2-1)}+ 3\ve  2^* (1-n)}\, f_\ve(\xi^n,\xi) \ , \label{e216}
\end{align}
where the second identity follows by replacing odd $n \in [r,2r-1]$
with $n-r$, which is even and in $[0,r-1]$.

By Proposition \ref{div1}, the right hand side of \eqref{e216} is divisible by the denominator of the right hand side of
\eqref{e115}, and \eqref{e5520} follows.

Statement \eqref{e5521} follows from \eqref{e5520} with $b=0$ and Lemma \ref{h1}(c), which says that  $ {H^{SO(3)}(0, \pm1,0)}\sim (1-\xi) \, \cD^{SO(3)}$.
\end{proof}

\subsection*{Proof of Theorem \ref{main2}}

By Proposition \ref{lem}, each factor in the right hand side  of \eqref{e_diag} is in $\Z[\xi^{1/4}, e_8]$, hence
$\tau^{SO(3)}_{M, L'}(\xi)\in \Z[\xi^{1/4}, e_8]$ if $M$ is diagonal.

Now suppose $M$ is an arbitrary 3-manifold. Let $N$ be the manifold  described in
Proposition \ref{p402}, for which
$ M\# M \# N
$
is diagonal.  Since the WRT invariant is multiplicative with respect to  connected sum,
 we get

$$ \left( \tau^{SO(3)}_{M, L'}(\xi)\right)^2 \, \tau^{SO(3)}_N(\xi) \in \Z[\xi^{1/4}, e_8]\ .$$
Since $2^{k}$ is coprime to $r$, $\tau^{SO(3)}_{\L(2^{k},-1)}(\xi)$ is a unit in $\Zxf$ by Lemma \ref{lens} (a). It follows that
$\tau^{SO(3)}_N(\xi)$ is a unit, hence $\left( \tau^{SO(3)}_{M, L'}(\xi)\right)^2 \in \Z[\xi^{1/4}, e_8]$.
 By Lemma \ref{ab} (c),   $ \tau^{SO(3)}_{M, L'}(\xi) \in \Z[\xi^{1/4}, e_8]$.
% One can refine the argument to prove the case $\beta(M)>0$.
 This completes the proof of the theorem.
% Finally, we note that $\xi^{1/2}\in\Zx$.

\qed

%%%%%%%%%%%%%%%%%%%%%%%%%%%
\section{Proof of the integrality in the $SU(2)$ case}   \label{secsu2}

If the order of $\xi$ is odd, then by the splitting  property (Proposition \ref{sp}), 

$$ \tau_{M,L'}^{SU(2)}(\xi) = \tau^{\BZ/2}_{M,L'} (\xi) \,  \tau_{M,L'} ^{SO(3)}(\xi)\ .$$
Both factors of the right hand side are algebraic integers by
Theorem \ref{main2} and  Proposition \ref{sp}. Hence $\tau_{M,L'}^{SU(2)}(\xi)$ is also an algebraic integer. 

Therefore throughout the remaining part of this section we will assume that
$r=\ord(\xi)$ is {\em even.} Note that in this case the order of $\xi^{1/4}$ is always $4r$ and
$e_8 \in \Z[\xi^{1/4}]$.

\begin{prop}\label{lem42}
Let $r=\ord(\xi)$ be even.
Suppose  $b=\pm p^s$, where $p$ is 0, 1 or a prime, $k$ an integer, and $\ve \in \{0,1\}$. Then
 $$\frac{H^{SU(2)}(k, b,\ve)}{H^{SU(2)}(0, \pm1,0)}\in
\Z[\xi^{1/4}]\  \quad {\text and}\quad \frac{H^{SU(2)}(k, 0,\ve)} {(1-\xi)\, \cD^{SU(2)}} \in
\Z[\xi^{1/4}]\ .$$
\end{prop}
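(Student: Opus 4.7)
My plan is to parallel the proof of Proposition \ref{lem}, though a finer case analysis is required because the $SU(2)$ sum ranges over $n\in[0,4r-1]$ with phases in $\xi^{1/4}$ (of order $4r$, since $r$ is even) rather than over odd $n$ with integer $\xi$-phases. By Lemma \ref{h1}(a), (b), one has, up to units,
\begin{equation*}
\frac{H^{SU(2)}(k,b,\ve)}{H^{SU(2)}(0,\pm1,0)}\;\sim\;\frac{2}{x_{2k+1+\ve}\,\sqrt{2r}}\sumx^{\xi,SU(2)} q^{b(n^2-1)/4-3\ve n/2}\,f_\ve(q^n,q),
\end{equation*}
where $f_\ve(z,q)=z^{-k}(q^{-k}z;q)_{2k+1+\ve}\in I_{2k+1+\ve}$; by Lemma \ref{h1}(c), the corresponding statement for $H^{SU(2)}(k,0,\ve)/((1-\xi)\cD^{SU(2)})$ follows from the $b=0$ case. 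Thus it suffices to show that the sum above lies in $x_{2k+1+\ve}\sqrt{2r}\cdot\Z[\xi^{1/4}]$.

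First I would dispose of the easy subcases. If $\ve=1$ and $b$ is even, the numerator vanishes by Lemma \ref{h1}(d). If $b=0$ (which then forces $\ve=0$), the phase collapses and $f_0(\xi^n,\xi)$ depends on $n$ only modulo $r$, so the $SU(2)$ sum reduces to $\sum_{n=0}^{r-1}f_0(\xi^n,\xi)$, to which Proposition \ref{div1} with $Q=0$ applies directly, yielding divisibility by $x_{2k+1}O_\xi$; since $2O_\xi\sim\sqrt{2r}$ by Lemma \ref{ab}(e), the required $\sqrt{2r}$ factor is absorbed. The substantive remaining case is therefore $b=\pm 1$ or $b=\pm p^s$ with $p$ prime, together with either $\ve=0$ or ($\ve=1$ and $b$ odd).

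For the substantive case, expand $f_\ve$ via the $q$-binomial identity \eqref{eq.newton} and interchange summations, obtaining the inner sum as a finite $\Z[\xi^{1/4}]$-linear combination of quadratic Gauss sums $G(b,c_j,\eta)$ in $\eta=\xi^{1/4}$, where the index $c_j=4j-4k-6\ve$ is always even and differs from $c_0$ by a multiple of $4$. When $\gcd(b,r)=1$ (covering $b=\pm 1$ and $b=\pm p^s$ with $p$ an odd prime not dividing $r$), Proposition \ref{lGauss}(b) yields $|G(b,0,\eta)|^2\sim 8r$, and Proposition \ref{lGauss}(c) kills $G(b,c,\eta)$ for all odd $c$; completing the square (valid because $b$ is invertible modulo $4r$) reduces each $G(b,c_j,\eta)$ to a unit multiple of $\eta^{\tilde Q(j)}G(b,0,\eta)$ for some integral quadratic form $\tilde Q$, extracting the common factor $G(b,0,\eta)\sim 2\sqrt{2r}$. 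The residual sum $\sum_j(-1)^j\binomq{2k+1+\ve}{j}q^{\tilde Q(j)+\binom{j}{2}}$ is then precisely of the form to which Corollary \ref{cor} applies, giving divisibility by $x_{2k+1+\ve}$. When $p$ is an odd prime dividing $r$, the Chinese-remainder factorization of Gauss sums (Proposition \ref{lGauss}(d)) separates the $p$-part from the rest, reducing to the coprime case on each factor.

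The principal obstacle is the subcase $b=\pm 2^s$ with $\ve=0$, where $\gcd(b,r)>1$ and neither Proposition \ref{lGauss}(b) nor (d) applies directly. The remedy is to invoke Proposition \ref{lGauss}(a) with $c=\gcd(b,4r)$ (a power of $2$), reducing $G(b,c_j,\eta)$ to $c\cdot G(b/c,c_j/c,\eta^c)$ with $b/c$ coprime to $\ord(\eta^c)=4r/c$; one must then carefully track which $c_j$ are divisible by $c$ (so that the Gauss sum survives) and the $2$-adic valuations of the phases $\binom{j}{2}+aj$ produced by \eqref{eq.newton}, in order to combine the surviving terms and absorb both $\sqrt{2r}$ and $x_{2k+1+\ve}$. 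A more uniform alternative, which I would attempt first, is to establish a direct $SU(2)$-analog of Proposition \ref{div1}, namely that $\sum_{n=0}^{4r-1}\eta^{Q(n)}f(\eta^{4n},\eta^4)\in x_k\sqrt{2r}\cdot\Z[\eta]$ for every integral quadratic $Q$ and every $f\in I_k$ with $0\le k<r$; such a lemma would reduce Proposition \ref{lem42} to a single invocation and handle all subcases simultaneously.
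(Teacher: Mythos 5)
Your reduction via Lemma \ref{h1} is correct, and your treatment of the cases $b=0$, ($\ve=1$, $b$ even), and $b$ odd with $\gcd(b,r)=1$ is sound; in the last of these your route (expand $f_\ve$ by \eqref{eq.newton}, complete the square in $G(b,c_j,\xi^{1/4})$ using that $b$ is invertible mod $4r$, pull out $G(b,0,\xi^{1/4})\sim 2\sqrt{2r}$, and apply Corollary \ref{cor} to the residual $j$-sum) is a genuine and legitimate alternative to the paper, which instead splits the $n$-sum into even and odd parts, applies Proposition \ref{div1} to get the ratio into $\tfrac12\Z[\xi^{1/4}]$, and then removes the factor $\tfrac12$ by a separate $2$-adic statement (Lemma \ref{divisor}, proved via the coproduct factorization of Corollary \ref{append}, Theorem \ref{thm1} and Gauss sums) together with $\tfrac12\Z[\xi^{1/4}]\cap\BZ_{(2)}[\xi^{1/4}]=\Z[\xi^{1/4}]$.

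However, the remaining cases are genuinely not closed. First, for $b=\pm p^s$ with $p$ an odd prime dividing $r$, the claim that Proposition \ref{lGauss}(d) ``reduces to the coprime case on each factor'' does not go through as stated: after the CRT factorization the $p$-part of the Gauss sum falls under Proposition \ref{lGauss}(a) with $c=\gcd(p^s,p^a)>1$, so it vanishes unless $c\mid c_j$; the surviving $j$ then lie in a single arithmetic progression mod $c$, and Corollary \ref{cor} gives no divisibility by $x_{2k+1+\ve}$ for such a restricted sum, nor have you tracked how the factor $c$ and the smaller Gauss sums recombine against $\sqrt{2r}$. The paper's even/odd splitting plus Lemma \ref{divisor} handles all odd $b$ uniformly precisely to avoid this; that mechanism (or a worked-out substitute) is missing from your proposal. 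Second, your ``principal obstacle'' $b=\pm2^s$ is only partially an obstacle: for $s\ge2$ the phase $\xi^{b(n^2-1)/4}$ is an integer power of $\xi$, the summand is $r$-periodic, and Proposition \ref{div1} applies directly (this is the paper's case $b\equiv0\bmod4$, the same observation you used for $b=0$); but for $b=\pm2$ your machinery really does break, since completing the square produces exponents quadratic in $j$ with half-integer coefficients (in fact the quadratic parts cancel and one is left with a $q$-binomial sum at $\xi^{1/2}$), so Corollary \ref{cor} is inapplicable, and the divisibility instead comes from the closed-form evaluation of $H^{SU(2)}(k,\pm2,0)$ in \cite[Lemma 5.2]{bbl2} followed by a unit computation, which your proposal neither reproduces nor replaces. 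The ``uniform alternative'' lemma you say you would attempt first is left unproven and essentially contains the whole difficulty (it would have to subsume both the $b=\pm2$ evaluation and the $p\mid r$ case), so as written the proof has real gaps in exactly the cases where the paper needs its two extra inputs.
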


The following lemma will be used in the proof of the above proposition for odd $b$.

\begin{lemma}
\label{divisor}
Suppose $b$ is odd, $r$ is even, $a\in \BZ$, and $f \in I_k$. Then
$$A:=\frac{\sumx^{\xi,SU(2)} q^{\frac{ b n^2}4 + \frac{an}{2}} \,  f(q^n,q)}{x_k O_\xi}$$ belongs to $\BZ_{(2)}[\xi^{1/4}]$, where $\BZ_{(2)}$ is the set of
all rational numbers with odd denominators.
\end{lemma}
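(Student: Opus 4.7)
The plan mirrors the proof of Proposition \ref{div1}, adapted to the $SU(2)$ setting with $r$ even. First, using $\zeta := \xi^{1/4}$ with $\zeta^{2r} = -1$, the summand $\zeta^{bn^2+2an} f(\xi^n, \xi)$ has period dividing $2r$ in $n$, so
\[
\sumx^{\xi, SU(2)} q^{bn^2/4 + an/2} f(q^n, q) \;=\; \tfrac{1}{2}\sum_{n=0}^{2r-1} \zeta^{bn^2+2an} f(\xi^n, \xi).
\]
Splitting $n \in [0, 2r-1]$ as $[0, r-1] \cup \{r+j : j \in [0, r-1]\}$ and computing the shift factor (using $b$ odd and $r$ even) collapses the sum to a single parity class:
\[
A\, x_k\, O_\xi \;=\; \sum_{\substack{0 \le j < r \\ j \equiv a + \delta \,(\bmod\,2)}} \zeta^{bj^2 + 2aj}\, f(\xi^j, \xi),
\]
where $\delta \in \{0, 1\}$ records whether $\zeta^{br^2} = \pm 1$ (determined by $r \bmod 4$).

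Next, by linearity reduce to $f(z, q) = z^d(q^c z; q)_k$ and substitute $j = 2m$ or $j = 2m+1$ as dictated by the required parity. The quarter-root phases $\zeta$ then collapse to integer $\xi$-powers, yielding
\[
A\, x_k\, O_\xi \;=\; \eta \cdot \sum_{m=0}^{r/2-1} \xi^{Q(m)}\, h(\xi^m, \xi),
\]
where $\eta$ is a unit in $\BZ[\xi^{1/4}]$, $Q(m)$ is an integer quadratic form, and $h(z, q) := (q^c z^2; q)_k$ is \emph{even} in $z$. One verifies $h \in I_k$ via Proposition \ref{ideal}, since $h(q^t, q) = (q^{c+2t}; q)_k = \binomq{c+2t+k-1}{k}(q; q)_k$ is divisible by $(q;q)_k$ in $\BZ[q^{\pm 1}]$.

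Finally, extend to the full sum $\sum_{m=0}^{r-1} \xi^{Q(m)} h(\xi^m, \xi)$. The evenness of $h$ together with $\xi^{r/2} = -1$ give $h(\xi^{m+r/2}, \xi) = h(\xi^m, \xi)$, so the full sum is $(1+\mu)$ times the half sum, for a constant $\mu = \xi^{Q(m+r/2) - Q(m)} \in \{\pm 1\}$. By Proposition \ref{div1}, the full sum lies in $x_k O_\xi \BZ[\xi]$. The main obstacle is to extract $\BZ_{(2)}$-integrality of $A$ in both cases. When $\mu = 1$, this naively yields only $2 A \in \BZ[\xi]$; to upgrade, one must use the involution $m \leftrightarrow m + r/2$ to show the full sum actually lies in $2 x_k O_\xi \BZ[\xi^{1/4}]$. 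When $\mu = -1$ (occurring when $r \equiv 2 \pmod 4$, so $r/2$ is odd), the full sum vanishes; here a CRT decomposition of the $r$-th roots of unity as a product of $2$-torsion and $(r/2)$-torsion recasts the half sum as a genuine full Gauss sum at $\xi^2$ of odd order $r/2$, to which Proposition \ref{div1} applies directly, yielding the required 2-integrality via careful bookkeeping through Proposition \ref{lGauss}.
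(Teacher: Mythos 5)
Your reduction is fine as far as it goes: the shift factor $(-1)^{j+r/2+a}$ does collapse the sum to one parity class, and after substituting $j=2m$ or $j=2m+1$ you correctly arrive at $A\,x_k O_\xi=\eta\sum_{m=0}^{r/2-1}\xi^{Q(m)}h(\xi^m,\xi)$ with $h\in I_k$ even in $z$ (the verification via Proposition \ref{ideal} is fine). The gap is the final step, which is exactly where the real content of the lemma lies: controlling the prime $2$. Since the full sum over $m\in[0,r-1]$ equals $(1+\mu)$ times your half sum, Proposition \ref{div1} gives, when $\mu=1$, only $2A\in\Z[\xi^{1/4}]$, and your proposed remedy --- ``use the involution $m\leftrightarrow m+r/2$ to show the full sum lies in $2x_kO_\xi\Z[\xi^{1/4}]$'' --- is not an argument: that involution is precisely what produced the factor $1+\mu=2$, and the asserted strengthening is equivalent to the statement being proved. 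Note moreover that for your basis elements $f=z^d(q^cz;q)_k$ one computes $\mu=(-1)^{br/2+\beta}$ with $\beta$ the linear coefficient of $Q$, and tracking which parity class survives shows $\mu=+1$ in every case; so the unproved $\mu=1$ case is not a marginal case but the whole lemma, while your description of when $\mu=-1$ occurs (``$r\equiv2\pmod4$'') is also incorrect as a general statement about quadratic forms, and the CRT fallback sketched there presupposes $r/2$ odd and is anyway left at the level of ``careful bookkeeping''.

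For comparison, the paper's proof is built precisely to keep all denominators odd: it factors the Gauss sum $G(b,4j+2a,\xi^{1/4})$ by Proposition \ref{lGauss}(d) into an odd part and a $2$-primary part, splits $f$ through the coproduct $\Delta(z)=z\otimes z$ using Corollary \ref{append}, applies Theorem \ref{thm1} to the quadratic form coming from the odd part, evaluates the $2$-primary Gauss sums directly to get divisibility by $(\xi;\xi)_{k_2}$, and bounds the leftover scalar via $G^2(br_o,0,\xi^{r_o/4})\sim 8r_e$ and $O_\xi^2\sim r/2$ together with the integral closedness of $\BZ_{(2)}[\xi]$. Some device of this kind --- an honest $2$-adic estimate on your half sum --- is missing from your proposal, so as written it does not prove the lemma.
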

\begin{proof}
Let $r= r_e r_o$, where $r_o$ is odd, and $r_e$ is a power of 2. Then $\ord(\xi^{1/4})=4r= (4r_e) r_o$, with $4r_e$ and $r_o$ coprime. By definition,
\begin{align}
\sumx^{\xi,SU(2)} q^{\frac{b n^2}4+ \frac{an}{2}} q^{nj}& = \frac14 \, G(b,4j+2a,\xi^{1/4}) \quad  \text{by Proposition \ref{lGauss} (d)}\notag\\
\quad 
&= \frac14  \, G(b \,r_o, 4j+2a, \xi^{r_o/4})\,  G(4b \,r_e, 4j+2a, \xi^{r_e})  \quad
\notag \\
 \quad
&= \frac14 \xi^{da^2/4} \, G(b \,r_o, 0, \xi^{r_o/4})\, \xi^{d(j^2+aj)}\, G(4b \,r_e, 4j+2a, \xi^{r_e})  \ ,
\label{e330}
\end{align}
where $d$ is any multiple of $r_o$ such that  $db \equiv -1 \pmod {4r_e}$.

Let us extend $\Delta(z)=z\otimes z$ to a  $\Zq$-algebra homomorphism
$$\Delta:\Zqz\to
\Zqz\otimes_{\Zq}\Zqz.$$
Define $Q(j) = dj^2 + da j$. Also define
$\Z[q^{\pm1/4}]$-module homomorphism
$T: \Z[z^{\pm 1}, q^{\pm 1/4}]\to\Z[q^{1/4}]$ by:
$$T(z^j)=G(4b \,r_e, 4j+2a, \xi^{r_e})\ .$$

Using \eqref{e330} we can rewrite
\be
 \sumx^{\xi,SU(2)} q^{\frac{b n^2}4+ \frac{an}{2}}
   f(q^n,q)   =\frac{\xi^{da^2/4}}{4}\, G(b \,r_o, 0, \xi^{r_o/4})\, \ev_\xi \Big\{  (\l Q \otimes T) (\Delta\,  f)\Big\}.
 \label{e234}
 \ee

It is enough to consider the case $f=z^m(q^l z;q)_k$.
Applying Corollary \ref{append} to $\Delta(z^m(q^l z;q)_k)$, 
using $z_1=z\otimes 1$ and $z_2=1\otimes z$,
we see that $A$  is
a $\Z[\xi^{\pm 1/4}]$-linear combination of terms of the form
\be
 B=\left(\frac{G(b \,r_o, 0, \xi^{r_o/4})}{4  O_\xi}\right)
\left(\frac{(\xi;\xi)_k\,  \ev_\xi \left \{\l Q (z^m(z;q)_{k_1}) \right\}}{ x_k (\xi;\xi)_{k_1}} \right)
\left(\frac{T(z^m(z;q)_{k_2})}{(\xi;\xi)_{k_2}}\right) \label{e8001}
\ee
with $k_i\le k$. There are three factors on the RHS of \eqref{e8001}
 and we will show that each factor belongs to $\BZ_{(2)}[\xi^{1/4}]$.
The last factor in $B$ is in $\BZ[\xi]$. In fact, if $z^m(z;q)_{k_2} = \sum_j c_j(q) z^j$ then
\bas
T(z^m(z;q)_{k_2}) &= \sum_j c_j(\xi)\, G(4 br_e,4j+2a,\xi^{r_e})
 = \sum_j c_j(\xi)\sum_{n=0}^{r_o-1}
\xi^{2 r_e(br_en^2+(2j+a)n)} \\
&= \sum_n \xi^{4 br_e^2n^2+2r_e a n}\sum_j c_j(\xi) \xi^{4 r_en  \, j } = 
\sum_n \xi^{4br_e^2n^2+2r_e  n (a+2m)} (\xi^{4r_en};\xi)_{k_2}\ ,
\end{align*}
which is divisible by $(\xi;\xi)_{k_2}$ in $\Z[\xi]$. This shows that the last factor of \eqref{e8001} is in $\BZ_{(2)}[\xi^{1/4}]$.

By Theorem \ref{thm1}, $\ev_\xi \big(\l Q  (z^m(z;q)_{k_1})\big)$ 
is in $x_{k_1}\, \BZ[\xi]$.
Hence the second factor  is in
$$\frac{(\xi;\xi)_k\,x_{k_1}}{x_k (\xi;\xi)_{k_1}} \BZ[\xi]=
 \frac{(\xi;\xi)_{\lfloor k/2\rfloor}}{(\xi;\xi)_{\lfloor k_1/2\rfloor}}\, \BZ[\xi] \subset \BZ[\xi].$$

By Proposition \ref{lGauss} (b),
$ G^2(b \,r_o, 0, \xi^{r_o/4})\sim 8 r_e$, while $O_\xi^2 \sim r/2$ by Lemma \ref{ab} (e).
 It follows that the square of the
first factor,  and hence the first factor itself, is in $\frac{r_e}{r} \BZ[\xi]= \frac{1}{r_o} \BZ[\xi] \subset \BZ_{(2)}[\xi].$
Here we use the fact that $\BZ_{(2)}[\xi]$ is integrally closed.

We can conclude that $B$, and hence $A$, is in $\BZ_{(2)}[\xi].$
\end{proof}

\begin{proof}[Proof of Proposition \ref{lem42}]
By Lemma \ref{h1}  there is $f_\ve(z,q)\in I_{2k+1+\ve}$ such that
\be
\frac{H^{SU(2)}(k, b,\ve)}{H^{SU(2)}(0, \pm1,0)}  \sim \frac{\frac14
\sum_{n=0}^{4r-1 } \xi^{\frac{b}{4} (n^2-1)} \xi^{-3 \ve n/2} f_\ve(\xi^n,\xi)}  {x_{2k+1+\ve}O_\xi} \label{e225}\ .
\ee

We split
the proof  into 3 cases:
 (1) $b\equiv 0\mod 4$;  (2) $b=\pm 2$ and (3) $b$ is odd.

\noindent
 (1) $b=4 b', b' \in \BZ$. Since  $H^{SU(2)}(k, b,1)=0$ by Lemma \ref{h1} (d), we can assume $\ve=0$. By \eqref{e225},
$$
\frac{H^{SU(2)}(k, b,0)}{H^{SU(2)}(0, \pm1,0)}   \sim \frac{\frac14
\sum_{n=0}^{4r-1} \xi^{b' (n^2-1)}  f_0(\xi^n,\xi)}  {x_{2k+1}O_\xi} \notag
 = \frac{
\sum_{n=0}^{r-1} \xi^{b'(n^2-1)}  f_0(\xi^n,\xi)}  {x_{2k+1}O_\xi}\ ,
$$
which is in
$\Zx$ by Proposition \ref{div1}.
\noindent
\vskip2mm

(2) $b=\pm 2$. Again  $H^{SU(2)}(k, b,1)=0$ by Lemma \ref{h1} (d), and we can assume $\ve=0$. This case was studied in \cite{bbl2}, where the exact value of $H^{SU(2)}(k, \pm 2,0)$ was obtained.
By Lemma 5.2 in \cite{bbl2}
we have
\begin{align*}
 H^{SU(2)}(k, \pm 2,0) & \sim  2 \sqrt {r}
 \prod^{k}_{i=0}\frac{1-\xi^{(2i+1)/2}}{1-\xi^{2i+1}}= 2 \sqrt {r}
 \prod^{k}_{i=0}\frac{1}{1+\xi^{(2i+1)/2}}\ .
% \\  &\sim  2 \sqrt {r} \prod^{k}_{i=0}\frac{1}{1-\xi^{(2i+1)/2}}
 \end{align*}
 Hence from Lemma \ref{h1}, with $k \le r/2-1$,

\begin{align*} \frac{H^{SU(2)}(k, \pm 2,0)}{H^{SU(2)}(0, \pm1,0)}
\sim \frac{\sqrt r /O_\xi}{\prod^{k}_{i=0}( 1+\xi^{(2i+1)/2})} \in z\, \BZ[\xi^{1/4}] \end{align*}
where
$$ z=  \frac{\sqrt r /O_\xi}{\prod^{r/2-1}_{i=0}( 1+\xi^{(2i+1)/2})}.$$
The square of the numerator of $z$ is $r/O_\xi^2 \sim 2$, by Lemma \ref{ab}.

Let us calculate the square of the denominator.
For any integer $j$ one has $$ (1+ \xi^{(2j+1)/2}) \sim (1- \xi^{(2j+1)/2}).$$
Hence
\bas \left( \prod_{j=0}^{r/2-1} (1+ \xi^{(2j+1)/2})\right)^2 & \sim \prod_{j=0}^{r/2-1} (1+ \xi^{(2j+1)/2})(1- \xi^{(2j+1)/2}) = \prod_{j=0}^{r/2-1} (1- \xi^{(2j+1)})\\
& \sim  \frac{\prod^{r-1}_{j=1}(1-\xi^{j})}{ \prod^{r/2-1}_{j=1}(1-\xi^{2j})}= \frac{r}{r/2}=2.
 \end{align*}
We can conclude that $\frac{H^{SU(2)}(k, \pm 2,0)}{H^{SU(2)}(0, \pm1,0)} \in \BZ[\xi^{1/4}]$.
\vskip2mm

\noindent
{(3) Assume that $b$ is odd.}  
Splitting the sum in the numerator of the right hand side of \eqref{e225}  into even and odd $n$ we get
\begin{align}
\qquad & \frac14 \sum_{n=0}^{4r-1} \xi^{b(n^2-1)/4} \,\xi^{-3\ve n/2}
 f_\ve(\xi^n,\xi) \notag\\
 & = \frac{1}4 \left\{\xi^{-b/4} \sum_{n=0}^ {2r-1} \xi^{bn^2-3\ve n} \, f_\ve(\xi^{2n},\xi) + 
 \xi^{-3\ve/2}\sum_{n=0}^ {2r-1}
\, \xi^{b(n^2+ n)-3\ve n}\,  f_\ve(\xi^{2n+1},\xi) \right\} \notag \\
& =\frac{1}2 \left\{\xi^{-b/4} \sum_{n=0}^ {r-1} \xi^{bn^2-3\ve n} \, f_\ve(\xi^{2n},\xi) + \xi^{-3\ve/2}
\sum_{n=0}^ {r-1}
\, \xi^{b(n^2+ n)-3\ve n}\,  f_\ve(\xi^{2n+1},\xi) \right\}\label{e226}
\end{align}

Since $f_\ve(z^2,q)$ and $f_\ve(z^2q,q)$ belong to $I_{2k+1+\ve}$
 (according to Proposition \ref{ideal}), each summand in the  curly brackets
 of the right hand side of \eqref{e226} is divisible by
$x_{2k+1+\ve} O_\xi$, by Proposition \ref{div1}. It follows from \eqref{e225} that
$$
 \frac{H^{SU(2)}(k, b,\ve)}{H^{SU(2)}(0, \pm1,0)}\in \frac{1}{2}\;
\Z[\xi^{1/4}],
$$
which, together with
 Lemma \ref{divisor}, implies

$$ \frac{H^{SU(2)}(k, b,\ve)}{H^{SU(2)}(0, \pm1,0)} \in  \frac{1}{2}\;
\Z[\xi^{1/4}] \cap \Z_{(2)}[\xi^{1/4}] = \Z[\xi^{1/4}].$$

Finally $$\frac{H^{SU(2)}(k, 0,\ve)} {(1-\xi)\, \cD^{SU(2)}}\in \Zxf$$
 follows from Lemma \ref{h1} (c), which says that  $ {H^{SU(2)}(0, \pm1,0)}\sim (1-\xi) \, \cD^{SU(2)}$.
\end{proof}

%\begin{lemma}\label{l3002} $$\ex \left( \sum^{r-1}_{n=0} q^{bn^2}q^{a_0n}(q^{n+a_1}; q)_{2k+1} \right)$$ is divisible by
%$x_{{2k+1}} O_\xi$.
%\end{lemma}

\subsection*{Proof of Theorem  \ref{main1} }
By Proposition \ref{lem42}, each factor in the right hand side  of \eqref{e_diag} is in $\Zxf$, hence
$\tau^{SU(2)}_{M, L'}(\xi)\in \Zxf$ if $M$ is diagonal of prime type.

Now suppose $M$ is an arbitrary 3-manifold. According to Proposition \ref{p402}, there exist lens spaces
$\L(2^{k_1},-1), \ldots, \L(2^{k_j},-1)$, such that
$M^{\# 2s} \# N$
is diagonal of prime type for every positive integer $s$. Here $$N:=\#_{i=1}^j \L(2^{k_i},-1)\ .$$
 By Lemma \ref{lens}, there is an odd colored knot $K_i \subset \L(2^{k_i},-1)$ such that
$\tau^{SU(2)}_{\L(2^{k_i},-1), K_i} \neq 0$.
The knots $K_i$ together form a link $L'' \subset N$, and
$$\tau^{SU(2)}_{N,L''}(\xi) = \prod_i \tau^{SU(2)}_{\L(2^{k_i},-1), K_i} \neq 0.$$

Taking the connected  sum of $(N,L'')$ with $2s$ copies of $(M,L')$, we get a diagonal
3-manifold of prime type. Hence,
 $$ \left (\tau_{M,L'}^{SU(2)}(\xi)\right)^{2s}
 \, \tau_{N,L''}^{SU(2)}(\xi)  \in \Z[\xi^{1/4}]\ $$
for every positive integer $s$. Applying Lemma \ref{ab} (c),
 we get
$\tau^{SU(2)}_{M, L'}(\xi) \in  \Zxf$.

%{\bf Here as in the $SO(3)$ case I was not be able to produce the factor $(2u_\xi)^{\beta(M) + 1}$.}
\qed

%\section*{Appendix}

\appendix

\numberwithin{equation}{section}
\numberwithin{figure}{section}

\section{Proof of
Theorem \ref{prop.habiro}}

%{\bf Please polish.  Use $\Zqf$ or $\Zq$ ... at what places?}

\def\inv{\mathrm{inv}}
\def\trq{\tr_q}

\subsection{Algebraic preliminaries}
We first recall the universal quantized algebra $U_h= U_h(sl_2)$ and some of its properties. For more details see e.g. \cite{ha}.

The universal quantized algebra  $U_h = U_h (sl_2 )$ is the
$h$-adically complete $\Q[[h]]$-algebra, topologically generated by the elements $H, E,$
and $F$, satisfying the relations
$$ HE-EH= 2E, \quad HF-FH= -2F, \quad EF-FE = \frac{K- K^{-1}}{v - v^{-1}}\ ,$$
where $K:= \exp(hH/2)$,  $v:= \exp(h/2)$, and $v^2=q$.
The algebra $U_h$ has a structure of Hopf algebra, which makes $U_h$ into a $U_h$-module via the adjoint representation, and
defines a tensor product on the set of  $U_h$-modules. In particular, the completed tensor powers $U_h ^{\hat \otimes m}$ 
is a $U_h$-module via the adjoint representation.
For a set $Y \subset U_h ^{\otimes m}$ its subset of invariant elements is defined by
$$ Y^\inv := \{ y \in Y \mid a \cdot y = \epsilon(y), \quad \forall a \in U_h\}\ ,$$
where $\epsilon$ is the antipode of $U_h$ and $a \cdot y$ is the adjoint action.
It is known that
$(U_h)^\inv$ is exactly the center of $U_h$.

For each positive integer $n$ there is a unique $n$-dimensional irreducible $U_h$-module, denoted by $V_n$, we set
$V: = V_2$. Let
$$R={\operatorname {Span}}_{\BZ[v^{\pm 1}]} \{V_n, n\geq 1\}\ ,$$
which is a $\BZ[v^{\pm 1}]$-algebra whose
multiplication is the  tensor product. One has
\be V_n V = V_{n+1} + V_{n-1}\ ,
\label{e811}
\ee
and as a ring $R = \BZ[v^{\pm 1}][V]$, the ${\BZ[v^{\pm 1}]}$-polynomial algebra in $V$.

 For an $U_h$-module $W$ and $x \in U_h$ the quantum trace is defined by
$$ \trq^W(x) = \tr(xK^{-1}, W)\ ,$$
which can be linearly extended to the case when $W$ is a ${\BZ[v^{\pm 1}]}$-linear combination of $U_h$-modules.

The quantum trace preserves ad-invariance, which means the following.
Suppose $W \in R$ and $y \in \big( U_h ^{\otimes m}\big)^\inv$, then $\big( \id^{\otimes (m-1)}\otimes \trq^W\big) (y) \in \big( U_h ^{\otimes (m-1)}\big)^\inv$. 

\subsection{New bases for $R$}

In $R$ consider the following elements: $ P_0^{(0)} = P_0^{(1)}=1$,
$$ P_n^{(0)} = \prod_{j=1}^n (V - \lambda_{2j-1})\ , \quad  P_n^{(1)} = \prod_{j=1}^n (V - \lambda_{2j})\ ,$$
where $\lambda_n=v^n+v^{-n}$.
Note that $P_n^{(0)} $ is  $P_n$ of \cite{ha}.
Since $P_n^{(0)}$ is a monic polynomial of degree $n$ in $V$ with coefficients in $\BZ[v^{\pm 1}]$, 
it is clear that the set $\{ P_n^{(0)}, n =0,1,2,\dots \}$ forms a $\BZ[v^{\pm 1}]$-basis of $R$.
Similarly, $\{ P_n^{(1)}, n =0,1,2,\dots \}$ also forms a $\BZ[v^{\pm 1}]$-basis of $R$. 
It is not difficult to express $V_{n}$ through these bases. In fact, \eqref{e811}, together with an easy induction, 
will give the following identities, the first of which was  obtained in \cite{ha}.

\be\label{app_even}
V_n=\sum^{n-1}_{k=0} \qbinom{n+k}{2k+1}  P^{(0)}_k\ ,
%\quad\text{
%and}
\quad V_n=\sum^{n-1}_{k=0} \qbinom{n+k}{2k+1}\frac{\lam_n}{\lam_{k+1}} P^{(1)}_k \ . 
\ee

\subsection{Integral subalgebras and their completions}
Following \cite{ha} let $\Uq0$ be the $\Z[q^{\pm 1}]$-subalgebra of $U_h$ generated by
$\tF l$, $e$ and $K^{\pm 2}$, where
$$
\tF l := q^{l(1-l)/4} F^lK^l/[l]!\quad \text{and}\quad e := \cb1 E\ .
$$
Let $\Uq1 = K\, \Uq0$ and
 $\cU_q = \Uq{0}\oplus \Uq{1}$.

\def\bve{\mathbf{e}}
\def\tUe{\widetilde{\U_q^{\otimes (\bve)}}}

Let $\F_p(\U_q^{\otimes m}) \subset \U_q^{\otimes m} $ be the $\Z[q^{\pm 1}]$-span of 
elements of the form $y_1 \otimes y_2 \otimes \dots \otimes y_m$, where each $y_j$ belongs to $\U_q$, and one of them belongs to $\U_q \, e^p \U_q$. 
%Since $e^p \in \F_p(\U_h)$, it is clear that $\F_p(\U_q^{\otimes m}) \subset \F_p (U_h)^{\hat \otimes m}$. 
%{\bf Thang, 
For a set $Y \subset \U_q^{\otimes m}$ define its completion
$$\tilde Y := \left \{ \sum_{j=0}^\infty z_p\mid z_p \in Y \cap \F_p(\U_q^{\otimes m})    \right\}.$$

In particular, when $m=1$, on can define  $\tilde\U_q$ and $\tilde{\U}^{(\ve)}_q$
for $\ve\in\{0,1\}$. For $\bve=(\ve_1,\dots,\ve_m) \in \{0,1\}^m$,
 let $\tUe$ be the completion of 
$\Uq{\ve_1 }\otimes \dots \otimes \Uq{\ve_m}$ defined as above.

The center $Z(\U_q)= (\U_q)^\inv$ is freely generated as an $\Zq$-algebra
by  the quantum Casimir operator
$$C=(1-q^{-1})\tilde F^{(1)}K^{-1}e+K+q^{-1}K^{-1}\, \in \Uq{1}\ .$$

% The structure of $Z(\tilde\U_q)= (\tilde\U_q)^\inv$ was studied in \cite{hab1}.

Set
$$\sigma^{(0)}_n=\prod^n_{i=0}(q C^2-(q^i+2+ q^{-i}))  \quad \text{and} \quad
\sigma^{(1)}_n=C\sigma^{(0)}_n\ . $$

 Theorem 1.1 in \cite{hab1} states that

 \be (\tilde\U^{(\ve)}_q)^\inv  = \left \{
\sum_{p\geq 0} a_p\,  \sigma_p^{(\ve)} \mid a_p \in \Zq
\right \}.
\ee

We will need the following result.

\begin{prop}\label{8.5}
Suppose $x \in \Uq{\ve}$, $\ve \in \{0,1\}$. Then for every $n$,
$\tr^{P_n^{(\ve)}}_q(x)$ belongs to $(q;q)_n\,  \Zq$.
\end{prop}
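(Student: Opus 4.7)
The proof builds on Habiro's treatment of the case $\ve=0$ in \cite[Theorem 6.3]{ha} (with $P_n^{(0)}$ identified with his $P_n$); the genuinely new content is the extension to $\ve=1$. I would proceed in three stages: reduction to central elements, explicit computation on simple modules, and a combinatorial divisibility argument.

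First I would reduce to the ad-invariant case. Since the quantum trace is $\ad$-invariant in the sense that $\trq^W(\ad_a(y))=\epsilon(a)\trq^W(y)$, the functional $x\mapsto \trq^{P_n^{(\ve)}}(x)$ factors through the projection onto $Z(\cU_q)\cap \Uq{\ve}$. The center $Z(\cU_q)=\Zq[C]$ respects the $\BZ_2$-grading (because $C\in \Uq{1}$, even resp.\ odd powers of $C$ lie in $\Uq{0}$ resp.\ $\Uq{1}$), and by the result of \cite{hab1} cited just above the proposition, $\{\sigma_p^{(\ve)}\}_{p\ge 0}$ is a $\Zq$-basis of $Z(\cU_q)\cap \Uq{\ve}$. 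It therefore suffices to show that $\trq^{P_n^{(\ve)}}(\sigma_p^{(\ve)})\in (q;q)_n\,\Zq$ for every $p\ge 0$.

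Next I would compute the trace values explicitly. A direct calculation on the highest weight vector of $V_m$ gives $C|_{V_m}=v^{-1}\lambda_m$, so that $\sigma_p^{(\ve)}$ acts on $V_m$ as the scalar $(v^{-1}\lambda_m)^\ve \prod_{i=0}^p(\lambda_m^2-\lambda_i^2)$. The factorization $\lambda_m^2-\lambda_i^2=\cb{m-i}\cb{m+i}$ shows that this scalar vanishes for $1\le m\le p$. Since $P_n^{(\ve)}=\sum_{m=1}^{n+1} c_m^{(n,\ve)} V_m$ as an element of $R$, the trace reduces to a finite alternating sum over $m\in\{p+1,\ldots,n+1\}$ which vanishes identically when $p\ge n+1$.

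The remaining step, and the main obstacle, is to prove that this finite sum is divisible by $(q;q)_n$. The coefficients $c_m^{(n,\ve)}$ are obtained by inverting the triangular expansion \eqref{app_even}. For $\ve=0$ this recovers Habiro's Chebyshev-type inversion, and the resulting sum collapses to $(q;q)_n$ times an element of $\Zq$ via a $q$-Vandermonde-type identity implicit in the proof of \cite[Theorem 6.3]{ha}. For $\ve=1$, however, the inversion of \eqref{app_even} introduces the correction factor $\lambda_n/\lambda_{k+1}$, and the compensating factor $v^{-1}\lambda_m$ appearing in $\sigma_p^{(1)}=C\sigma_p^{(0)}$ is precisely what allows the denominators $\lambda_{k+1}$ to cancel upon summation over $m$, after which the same type of $q$-Vandermonde identity applies. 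Verifying this parity-matching cancellation --- a combinatorial identity with no counterpart in \cite{ha} --- is the new technical input needed in the odd case.
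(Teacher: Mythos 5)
There is a genuine gap in your first reduction. You assert that, because $\tr_q^W$ is an ad-invariant functional, the map $x\mapsto \tr_q^{P_n^{(\ve)}}(x)$ ``factors through the projection onto $Z(\cU_q)\cap\Uq{\ve}$,'' and you then invoke the description of $(\tilde{\U}^{(\ve)}_q)^{\mathrm{inv}}$ from \cite{hab1} to replace $x$ by a $\Zq$-linear combination of the $\sigma_p^{(\ve)}$. Rationally (over $\BQ(q)$, via the isotypic decomposition of the adjoint action) the trace does indeed see only the invariant component of $x$; but the proposition is an \emph{integrality} statement, and your reduction silently assumes that the central projection of an integral element of $\Uq{\ve}$ is an \emph{integral} combination of the $\sigma_p^{(\ve)}$. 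The theorem of \cite{hab1} you cite describes the ad-invariant elements of the completion; it says nothing about where the projection of a non-invariant integral element lands, and establishing integrality of that projection in the $\sigma_p^{(\ve)}$-basis is essentially as hard as the proposition itself --- in the paper's logic, Proposition \ref{8.5} is one of the inputs used to control such partial traces, not a consequence of the central structure. In addition, your last step leaves the key divisibility as an unverified ``$q$-Vandermonde-type'' cancellation, so the argument is incomplete even granting the reduction; for the record, the orthogonality $\tr_q^{P_n^{(\ve)}}(\sigma_p^{(\ve)})=\delta_{n,p}\,\cb{2n+1}!\,\lambda_{n+1}^{\ve}/(v^{\ve}\cb 1)$ is proved in Lemma \ref{rosso} by a Rosso-form/Hopf-link computation rather than by inverting \eqref{app_even}, and the quotient $\cb{2n+1}!/\cb1$ is divisible by $(q;q)_n$ for elementary reasons.

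The paper's proof takes a different and more elementary route that bypasses the center entirely: since $\Uq{1}=K\,\Uq{0}$, it suffices to treat the monomials $x=\tF{l}K^{2j+1}e^{l'}$; weight considerations force $l=l'$, the quantities $B(n,l,j)=\tr_q^{P_n^{(1)}}(\tF{l}K^{2j+1}e^{l})$ satisfy the recursion $B(n,l,j)=\cb{j-n}\cb{j+n}B(n-1,l,j)+q^{j}(1-q^{-l})B(n-1,l-1,j+1)$, and this is solved in closed form as $q^{-(j+l)n}(q;q)_n(q;q)_{n-l}\binomq{j-1}{n-l}\binomq{j+n}{n-l}$, where the factor $(q;q)_n$ is manifest. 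If you wish to keep your structure, the ingredient you must supply is the integrality of the Harish--Chandra-type projection $\Uq{\ve}\to\bigoplus_p\Zq\,\sigma_p^{(\ve)}$, and I do not see how to obtain that without a direct computation of the same flavor as the paper's.
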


\begin{proof} 
If $\ve=0$, this is  \cite[Lemma 8.5]{ha}.
The case $\ve=1$ can be proved similarly. It
is enough to set $x=\tF l K^{2j+1} e^{l'}$.
It is easy to see that $\tr^{\Pe_n}_q(\tF l K^{2j+1} e^{l'})=0$ if $l\ne l'$.
Set
$$
B(n,l,j) := \tr^{\Pe_n}_q(\tF l K^{2j+1} e^l)\ .
$$
Then it is clear that $B(n,l,j)=0$ when $l>n$ because $e^l$ vanishes on $V_1, V_2, \dots, V_{n+1}$.
When $l\le n$, by a similar argument as in the proof of \cite[lemma 8.8]{ha}, we have
$$
B(n,l,j) = \cb{j-n} \cb{j+n} B(n-1, l, j) + q^j (1-q^{-l}) B(n-1, l-1, j+1)\ .
$$
The above recursive relation and a simple induction will show that
$$
B(n,l,j)
= q^{-(j + l) n} (q;q)_n (q; q)_{n-l} \binomq{j-1}{n-l} \binomq{j+n}{n-l} \in (q;q)_n\,  \Zq\ .
$$
\end{proof}

\begin{lemma} For every non-negative integers $k$, $p$ and $\ve \in \{0,1\}$, one has
\be\label{2221}
 \tr_q^{P^{(\ve)}_{k}}(\sigma^{(\ve)}_p) = \delta_{k,p} \frac{\{2k+1\}!}{v^\ve \{1\}} \, \lambda_{k+1}^\ve\ .\ee
\label{rosso}
\end{lemma}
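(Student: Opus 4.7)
The strategy is to exploit the centrality of $\sigma_p^{(\ve)}$ together with the upper-triangular structure of the expansion \eqref{app_even}, reducing the lemma to a short $q$-factorial identity. Since $\sigma_p^{(\ve)}\in Z(U_h)$, it acts as a scalar $\chi_m(\sigma_p^{(\ve)})$ on each irreducible $V_m$. Applying $C$ to a highest-weight vector of $V_m$ (on which $e$ vanishes and $K$ acts by $v^{m-1}$) gives $C|_{V_m}=v^{m-1}+v^{-m-1}=v^{-1}\lambda_m$, hence $qC^2|_{V_m}=\lambda_m^2$; substituting into the defining products and using $\lambda_m^2-\lambda_i^2=\{m-i\}\{m+i\}$ yields
\[
\chi_m(\sigma_p^{(0)}) \;=\; \prod_i \{m-i\}\{m+i\},\qquad \chi_m(\sigma_p^{(1)}) \;=\; v^{-1}\lambda_m\cdot\chi_m(\sigma_p^{(0)}).
\]
The factor at $i=m$ forces $\chi_m(\sigma_p^{(\ve)})=0$ for every $1\le m\le p$, so the linear functional $W\mapsto \tr_q^W(\sigma_p^{(\ve)})$ annihilates the $\BZ[v^{\pm1}]$-span of $V_1,\dots,V_p$.

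Next I would observe that \eqref{app_even} is upper-triangular in the bases $\{V_n\}_{n\ge 1}$ and $\{P_k^{(\ve)}\}_{k\ge 0}$ of $R$ with leading coefficient $1$ on $P_{n-1}^{(\ve)}$, so inverting the triangular system over $\BZ[v^{\pm1}]$ places each $P_k^{(\ve)}$ in $\operatorname{Span}_{\BZ[v^{\pm1}]}(V_1,\dots,V_{k+1})$ with leading term $V_{k+1}$. For $k<p$ every $V_m$ occurring satisfies $m\le p$ and is annihilated by $\sigma_p^{(\ve)}$, giving $T_k(p):=\tr_q^{P_k^{(\ve)}}(\sigma_p^{(\ve)})=0$; this settles the off-diagonal case $k<p$.

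For the remaining $k\ge p$ I would run \eqref{app_even} the other way: substituting it into $\tr_q^{V_n}(\sigma_p^{(\ve)})=\chi_n(\sigma_p^{(\ve)})[n]$ and truncating by the previous step yields
\[
\chi_n(\sigma_p^{(\ve)})\,[n] \;=\; \sum_{k=p}^{n-1}\qbinom{n+k}{2k+1}\,\frac{\lambda_n^\ve}{\lambda_{k+1}^\ve}\,T_k(p),
\]
an upper-triangular linear system in the unknowns $(T_k(p))_{k\ge p}$ as $n=p+1,p+2,\dots$, so the $T_k(p)$ are uniquely determined. I would then verify that the conjectured values $T_k(p)=\delta_{k,p}\{2p+1\}!\,\lambda_{p+1}^\ve/(v^\ve\{1\})$ solve it: after cancelling $\lambda_{p+1}^\ve$ and using $[n]=\{n\}/\{1\}$, the check reduces to the telescoping identity
\[
\{n\}\prod_{i=1}^p\{n-i\}\{n+i\} \;=\; \frac{\{n+p\}!}{\{n-p-1\}!} \;=\; \qbinom{n+p}{2p+1}\,\{2p+1\}!,
\]
with the extra factor $v^{-1}\lambda_n$ from $C$ in the $\ve=1$ case absorbed cleanly into $\lambda_n^\ve/\lambda_{k+1}^\ve$. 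The main obstacle will be the constant and factorial bookkeeping --- in particular, aligning the $i=0$ contribution in the product defining $\sigma_p^{(0)}$ with the normalization $\{2p+1\}!/(v^\ve\{1\})$ --- rather than any deeper structural point; the underlying picture is two triangular bases paired orthogonally by the form $(W,z)\mapsto \tr_q^W(z)$.
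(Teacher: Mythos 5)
Your proposal is correct, and it takes a genuinely different route from the paper. The paper disposes of $\ve=0$ by citation to Habiro and, for $\ve=1$, works with the Rosso/Hopf-link pairing $\la x,y\ra=\tr_q^x(\varphi(y))$: it uses the symmetry of that pairing together with the evaluation $\la V_n,f(V)\ra=[n]f(\lambda_n)$ to kill both off-diagonal regimes $k<p$ and $k>p$ at once (pairing $S^{(1)}_p$ against $V_1,\dots,V_{n+1}$ in one direction and $P^{(1)}_k$ against $V_2,V_4,\dots,V_{2p}$ in the other), and then computes the diagonal directly. You instead read off the central character $\chi_m(\sigma_p^{(\ve)})=(v^{-1}\lambda_m)^{\ve}\prod_i(\lambda_m^2-\lambda_i^2)$ from a highest-weight vector, get the case $k<p$ from $P_k^{(\ve)}\in\operatorname{Span}(V_1,\dots,V_{k+1})$, and then determine the remaining values $T_k(p)$, $k\ge p$, as the unique solution of the triangular system coming from \eqref{app_even}, verifying the claimed answer via $\{n\}\prod_{i=1}^p\{n-i\}\{n+i\}=\qbinom{n+p}{2p+1}\{2p+1\}!$. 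This is essentially the same computation in disguise ($f(\lambda_n)$ \emph{is} the central character of $\varphi(f)$ on $V_n$), but your version is self-contained: it never needs the symmetry of the pairing, the map $\varphi$, or Habiro's Proposition 6.3, at the cost of the extra uniqueness step for $k>p$. One point you rightly flagged as the place where bookkeeping matters: with the definition of $\sigma^{(0)}_p$ as literally printed, $\prod_{i=0}^{p}(qC^2-(q^i+2+q^{-i}))$, your character is $\{n\}^2\prod_{i=1}^p\{n-i\}\{n+i\}$ and the lemma's normalization fails by a factor $\{k+1\}^2$; the stated right-hand side (and the paper's own $\ve=1$ computation, which takes $\varphi^{-1}(v\sigma^{(1)}_p)=V\prod_{j=1}^{p}(V^2-\lambda_j^2)$) corresponds to the product running over $i=1,\dots,p$, so the $i=0$ factor is a typo in the definition rather than a defect of your argument.
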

\begin{proof}
The case $\ve =0$ is proved in \cite[Proposition 6.3]{ha}.
Hence, we restrict to $\ve=1$. 

As explained in \cite[Section 6.3.1]{ha}, there exists 
a homomorphism $\varphi: R\to Z(\U_q\otimes\Z[v^{\pm1}])$
sending $V$ to $v C$. In particular, for
$$ S_n^{(1)} : = V \prod_{j=1}^n (V^2 - (\lambda_j)^2)$$
we have $\varphi(S^{(1)}_n)=
v \sigma^{(1)}_n$. Moreover,
for any $x,y \in R$,
$$\tr_q^x(\varphi(y))=J_\cH(x,y):=\la x,y\ra\ ,$$
where $\la x,y\ra$ is 
the Rosso pairing defined as
  the colored Jones polynomial of the 
0-framed Hopf link $\cH$, whose two components are colored by $x$ and $y$.
Note that this pairing is symmetric.
Hence, for $\ve=1$, the left hand side of \eqref{2221} is
equal to $\la  P^{(1)}_k, v^{-1} S_p^{(1)}\ra $.

Since $\lambda_n := v^{n} + v^{-n}= \la V_n, V\ra /[n]$,
 for every $f(V)\in R$ we have
\be\label{in}
\la V_n, f(V)\ra = [n] f(\lambda_n)\ .
\ee
Hence if $m < n$, then $\la V_{2m+2}, P_n^{(1)} \ra =0$
and $\la  S_n^{(1)}, V_{m+1} \ra =0$.

Using $V_n V = V_{n+1} + V_{n-1}$, 
%$P_n^{(1)}$ is in the $\cA$-span of $V_1,V_2, \dots, V_{n+1}$, and 
%$S_n^{(1)}$ is in the $\cA$-span of $V_2,V_4, \dots, V_{2n+2}$. In fact,
we get
\be\label{Pe}
P_n^{(1)} = V_{n+1} + \text{a $\Z[v^{\pm 1}]$-linear combination of } V_1,V_2\dots, V_n\ ,
\ee
and
$$S_n^{(1)}= V_{2n+2} + \text{a $\Z[v^{\pm 1}]$-linear combination of } V_2,V_4, \dots, V_{2n}\  .$$

Therefore $\la S_m^{(1)}, P_n^{(1)}\ra = 0$ if $m\ne n$.

Finally
\begin{align*}
 v \tr_q^{P^{(1)}_{n}}(\sigma^{(1)}_n) &= \la S_n^{(1)}, P_n^{(1)}\ra = \la S_n^{(1)}, V_{n+1}\ra = [n+1] \lambda_{n+1} \prod_{j=1}^n ((\lambda_{n+1})^2 - (\lambda_j)^2)\\ &= [n+1] \lambda_{n+1}\prod^n_{j=1} \{j\}\{2n+2-j\}
\end{align*}
%\end{proof}

\end{proof}

\subsection{Proof of Theorem \ref{prop.habiro}} Suppose $L\sqcup L'$ is an oriented framed link with fixed colors $\bs=(s_1,\dots,s_l)$ on $L'$. Here $L$ has $m$ ordered components and $\ve_i$ are defined in \eqref{3306}.

According to \cite[Theorem 4.1]{ha}, there is an element $J_T \in \big( U_h ^{\otimes (m+l)}\big)^\inv$ such that
$$
J_L(\bn) = \trq^{V_{n_1} \otimes \cdots \otimes V_{n_m}\otimes V_{s_1}
\otimes \cdots \otimes V_{s_l}} (J_T)\ .
$$
(In \cite{ha}, $J_T$ is the universal invariant of a bottom tangle whose closure is $L \sqcup L'$.)

Using \eqref{app_even} to express $V_{n_i}$ as a linear combination of $P_{k}^{(\ve_i)}$, we have
\begin{align}
J_L(\bn) & =  \sum_{k_i=0}^{n_i-1} \trq^{P_{k_1}^{(\ve_1)} \otimes \cdots \otimes  P_{k_m}^{(\ve_m)}
\otimes V_{s_1}
\otimes \cdots \otimes V_{s_l}} (J_T)\,  \prod_{i=1}^m \qbinom{n_i+k_i}{2k_i+1}
\;\;\frac{\lam^{\ve_i}_{n_i}}{\lam^{\ve_i}_{k_i+1}} \notag \\
& = \sum_{k_i=0}^{n_i-1} \trq^{P_{k_1}^{(\ve_1)'} \otimes  \cdots \otimes P_{k_m}^{(\ve_m)'}
\otimes V_{s_1}
\otimes \cdots \otimes V_{s_l}} (J_T) \, \prod_{i=1}^m \qbinom{n_i+k_i}{2k_i+1}
\;\cb{k_i}!\;\frac{\lam^{\ve_i}_{n_i}}{\lam^{\ve_i}_{k_i+1}}\ ,
\label{e5510}
\end{align}
which is \eqref{eq.jj} with
$$c_{L\sqcup L'} (\bk) = \trq^{P_{k_1}^{(\ve_1)'} \otimes  \cdots \otimes P_{k_m}^{(\ve_m)'}
\otimes V_{s_1}
\otimes \cdots \otimes V_{s_l}} (J_T)\ .$$
Here $P_{k}^{(\ve)'}=P_k^{(\ve)}/\{k\}!$ .
% \subsection{Proof of Theorem \ref{prop.habiro}}

Without loss of generality, we may assume
$k_1=k=\max(k_1,\ldots,k_m)$.
% Let us consider the case when $\ve_1=1$.

By Theorem A.3 in \cite{bbl1},  $(\id^{\otimes m}\otimes \tr^{V_{s_1}}_q
\otimes \cdots \otimes \tr^{V_{s_l}}_q)(J_T) \in q^a \, \big( \tUe\big)^\inv$, for some  $a \in \frac 14\Z$. Let

$$ y:= (\id \otimes \tr^{P^{(\ve_2)'}_{k_2}}_q\otimes \cdots \otimes \tr^{P^{(\ve_m)'}_{k_m}}_q
\otimes \tr^{V_{s_1}}_q
\otimes \cdots \otimes \tr^{V_{s_l}}_q)(J_T)\ .$$
Then
$$c_{L\sqcup L'}(\bk)
= \tr^{P^{(\ve_1)'}_{k_1}}_q (y)\ .$$

Proposition \ref{8.5}, as well as the fact that quantum trace preserves ad-invariance, gives us $y \in q^a\, \big( \U_q^{(\ve_1)}\big)^\inv$. Hence $y$ has a presentation
 $y= q^a \, \sum_{p\geq 0} d_p \sigma_{p}^{(\ve_1)}$ with $d_p\in \Z[q^{\pm 1}]$. We then have
$$\tr^{P^{(\ve_1)'}_{k_1}}_q (y)= q^a \sum_p d_p \, \tr_q^{P^{(\ve_1)'}_{k_1}}\big( \sigma_p^{(\ve_1)}\big),$$
which belongs to $\frac{(q^{k+1};q)_{k+1}}{1-q}\, \Zqf$ by Lemma \ref{rosso}.\qed

%Material in this section will be used to improve Proposition \ref{prop.habiro}.

%%%%%%%%%%%%%%%%%%%%%%%
%\section{The non--diagonal case}
%%%%%%%%%%%%%%%%
%%%%%%%%%%%%%%%%

\end{document}